\newcolumntype{C}{>{\Centering\arraybackslash}m{0.14\linewidth}}
\numberwithin{equation}{section}
\theoremstyle{plain}
\newtheorem{theorem}{Theorem}[section]
\newtheorem{lemma}[theorem]{Lemma}
\newtheorem{proposition}[theorem]{Proposition}
\theoremstyle{definition}
\newtheorem*{defi*}{Definition} 
\theoremstyle{remark}
\newtheorem{remark}{Remark}
\theoremstyle{remark}
\let\alignts@preamble\align@preamble
\patchcmd{\alignts@preamble}{\displaystyle}{\textstyle}{}{}
\patchcmd{\alignts@preamble}{\displaystyle}{\textstyle}{}{}
\def\alignts{\let\align@preamble\alignts@preamble\start@align\@ne\st@rredfalse\m@ne}
\title[Traveling waves]{Quadratic Growth Model with Discontinuity: A Link between Monostable and Bistable Traveling Waves}
\author[W. Choi]{Wonhyung Choi}
\address[WC]{School of Computer Engineering and Applied Mathematics, Hankyong National University, 327, Jungang-ro, Anseong, 17579, Republic of Korea}
\email{whchoi@hknu.ac.kr}
\author[J. Bae]{Junsik Bae*}
\address[JB]{IRMAR - UMR 6625, CNRS,  Univ Rennes, Rennes, 35000, France}
\email{altena00@gmail.com}
\author[Y.-J. Kim]{Yong-Jung Kim}
\address[YK]{Department of Mathematical Sciences, KAIST, 291, Daehak-ro, Yuseong-gu, Daejeon, 34141, Republic of Korea}
\email{yongkim@kaist.edu}
\date{\today} 
\subjclass{Primary: 	35K57,  	35C07   Secondary:  	92D25, 92D15}
\begin{document}
\begin{abstract}
We classify traveling waves and stationary solutions of a reaction–diffusion equation arising in population dynamics with Allee-type effects. The reaction term is given by a quadratic polynomial with a discontinuity at zero, which captures finite-time extinction for sub-threshold populations. This discontinuity induces a free boundary in the wave profile, a phenomenon that distinguishes the model from the classical logistic or Allen–Cahn equations. A complete scenario is presented that connects monostable and bistable traveling waves through the wave speed parameter, thereby providing a unified framework for their dynamics.

\noindent{\it Keywords}: reaction-diffusion equation; discontinuous nonlinearity; free-boundary; traveling wave; Allee effect 
\end{abstract}

\maketitle

\section{Introduction}

This paper has two primary objectives. The first is to introduce the quadratic reaction function
\begin{equation}\label{P2}
\dot{u} = r_0 + r_1 u + r_2 u^2, 
\qquad r_0 \le 0,\ r_2 < 0,\ r_1 > \sqrt{4r_0r_2},
\end{equation}
from the perspective of population dynamics. Here, \(u \ge 0\) denotes the population density, and \(\dot{u}\) represents its temporal growth rate. In this model, \(r_1>0\) corresponds to the intrinsic growth rate, while \(r_2<0\) reflects the reduction caused by intraspecific competition: the linear term \(r_1 u\) models population increase, whereas the quadratic term \(r_2 u^2\) captures competitive suppression. The constant term \(r_0\) is typically interpreted as a \emph{harvesting term} when \(r_0 < 0\) and as an \emph{external source term} when \(r_0 > 0\). While the terms in \eqref{P2} admit the aforementioned mechanistic interpretations, we emphasize that the model \eqref{P2} should be understood as a second-order polynomial approximation of population dynamics, without necessarily assigning a biological meaning to each term.

The second objective is to classify traveling wave solutions of the associated reaction–diffusion equation and to present a comprehensive scenario that connects monostable and bistable traveling waves by treating the wave speed as a parameter.

\subsection{Full second-order polynomial model} \label{sec:model}

A well-known special case of the quadratic model \eqref{P2} is the logistic equation, which corresponds to the case $r_0=0$ and is written as
\begin{equation}\label{Logistic}
\dot{u} = r_1 u + r_2 u^2 = r u(\beta - u), \quad r = -r_2 > 0,\ \beta = -\frac{r_1}{r_2} > 0.
\end{equation}
Here, $\beta > 0$ is known as the carrying capacity. One notable feature of the logistic equation \eqref{Logistic} is the \emph{hair-trigger effect}, which means that the solution always converges to the carrying capacity $\beta$ as long as the initial value is positive. This occurs because $u = 0$ is an unstable equilibrium, while $u = \beta$ is a stable one. Species exhibiting the hair-trigger effect are appropriately modeled using the logistic model. 

In contrast, some species exhibit the \emph{Allee effect} \cite{Al,KDLD}, where the population becomes extinct if its density falls below a certain threshold, say $\alpha > 0$. To model such behavior, the Allen–Cahn type model is often employed:
\begin{equation}\label{Allen-Cahn}
    \dot{u} = r u (u - \alpha)(\beta - u), \quad 0 < \alpha < \beta,\ r > 0,
\end{equation}
where \(u = \alpha\) is an unstable equilibrium, while \(u = 0\) and \(u = \beta\) are stable equilibria. Expanding the right-hand side gives
\[
\dot{u} = r_1 u + r_2 u^2 + r_3 u^3,
\]
which is a cubic polynomial model without a constant term. The signs of the coefficients are
\[
r_1 = -r \alpha \beta < 0, \quad r_2 = r(\alpha + \beta) > 0, \quad r_3 = -r < 0.
\]
It is noteworthy that the coefficient of the linear term is negative, while that of the quadratic term is positive, indicating that this cubic polynomial model is fundamentally different from the logistic population model.

The quadratic growth model \eqref{P2} preserves the logistic structure while accommodating the Allee effect. When $r_0 < 0$, \eqref{P2} has two distinct positive equilibria due to the condition $r_1 > \sqrt{4r_0r_2}$. Denoting these equilibria as $\alpha$ and $\beta$ with $0 < \alpha < \beta$, the model is rewritten as
\[
\dot{u} = r(u - \alpha)(\beta - u).
\]
We may set $r=1$ and $\beta = 1$ through appropriate choice of the population and time units. Moreover, since ecological models assume the population process ceases upon extinction ($u = 0$), we only consider the nonnegative domain $u \ge 0$. To make this explicit, we formulate the model as follows:
\begin{equation}\label{Model}
\dot{u} = (u - \alpha)(1 - u)\, \chi_{\{u > 0\}},\qquad 0<\alpha<1,
\end{equation}
where the characteristic function $\chi_{\{u > 0\}}$ equals 1 when $u > 0$ and 0 otherwise, ensuring the process halts once extinction occurs. On the right-hand side, the discontinuity occurs at \(u=0\), which reflects the fundamental fact that once the population vanishes, the population dynamics cease.

The resulting nonlinearity in \eqref{Model} exhibits bistability: the values \( u = 0 \) and \( u = 1 \) are stable equillibriums, while \( u = \alpha \) is an unstable one. The key distinction from the Allen-Cahn type nonlinearity in \eqref{Allen-Cahn} lies in the discontinuity at \( u = 0 \), in addition to the opposite signs of the first and second order terms. One may consider the Allen-Cahn type nonlinearity in \eqref{Allen-Cahn} as the regularization of \eqref{Model} by multiplying the reaction term by $u$. A key consequence of the discontinuity in \eqref{Model} is that if the initial population is less than \( \alpha \), the solution undergoes extinction at a finite time $T>0$ and remains \( u(t) = 0 \) for all \( t \ge T \). This finite-time extinction introduces a free boundary into the traveling wave solution, enriching its structure in comparison with the standard logistic or Allen–Cahn models. Understanding this phenomenon and characterizing the corresponding traveling wave solutions form the main theme of this paper.

\subsection{Traveling wave solutions of a reaction-diffusion equation}

We now consider a reaction-diffusion equation by adding a diffusion term to the quadratic population dynamics model:
\begin{equation}\label{sys}
u_t = u_{xx} + (u - \alpha)(1 - u)\, \chi_{\{u > 0\}}, \quad 0 < \alpha < 1,\quad t>0,\quad x \in \mathbb{R}.
\end{equation}
Following the standard convention, we set the diffusion coefficient as $1$, which can be achieved via appropriate scaling in the spatial variable $x$. We investigate the existence of traveling wave solutions to \eqref{sys} and their structure. 

A traveling wave solution with speed $c \in \mathbb{R}$ is a solution of the form $u(x,t) = u(x - ct)$, and under the change of variable $\xi = x - ct$, the equation becomes
\begin{equation}\label{tw}
-c u'(\xi) = u''(\xi) + (u(\xi) - \alpha)(1 - u(\xi))\, \chi_{\{u > 0\}}, \quad \xi \in \mathbb{R},
\end{equation}
where $'$ denotes differentiation with respect to $\xi$. Note that if $u(\xi)$ is a solution of \eqref{tw}, so is $u(\xi + \xi_0)$ for any constant $\xi_0 \in \mathbb{R}$. Thus, the uniqueness of a traveling wave solution is always understood up to translation. We also note that \eqref{tw} has a symmetry
\begin{equation}\label{Symmetry}
c \mapsto -c \quad \text{and} \quad \xi \mapsto -\xi.    
\end{equation}
In this work, we primarily focus on classical solutions—those that are continuously differentiable on $\mathbb{R}$ and a discontinuity in the second derivative may occur at the interface, where $u=0$, due to a discontinuity in the reaction term. Furthermore, we restrict our attention to nonnegative bounded solutions.

\begin{remark}
The regularity $u'(\xi)\in \textrm{Lip}(\mathbb{R})$ ensures the validity of the comparison principle, which in turn guarantees the uniqueness of weak solutions to the initial value problem associated with \eqref{sys}, \cite{CKKP}. See also the Appendix of the present paper. In Section~\ref{Sec3_1}, we use the constructed traveling waves, along with the comparison principle, to analyse the dynamics of solutions to the initial value problem.
\end{remark}
We classify nonnegative traveling wave solutions to \eqref{tw} into three types according to the equillibriums they connect. The first type connects the states \( u = 1 \) and \( u = \alpha \), satisfying
\begin{equation}\label{a2}
    \lim_{\xi \to -\infty} u(\xi) = 1, \quad \lim_{\xi \to +\infty} u(\xi) = \alpha.
\end{equation}
This case corresponds to a \emph{Fisher-KPP type} traveling wave, which is a monostable wave. The second type connects the states \( u = 1 \) and \( u = 0 \), satisfying
\begin{equation}\label{a1}
    \lim_{\xi \to -\infty} u(\xi) = 1, \quad u(\xi) = 0 \text{ if and only if } \xi \ge 0.
\end{equation}
This is a \emph{bistable} traveling wave. We will see that the second type is obtained as the limit case of the first type.

There exists a minimal wave speed for monotone traveling wave solutions satisfying \eqref{a2}, which is given as
\[
c^*:= 2\sqrt{1 - \alpha}.
\]
For all speeds $c \ge c^\ast$, there exists a unique, positive, monotone traveling wave solution satisfying~\eqref{a2}. However, if $c < c^\ast$, the existence and structure of solutions differ between the two cases $\alpha < 1/3$ and $\alpha \ge 1/3$. Below, these two cases are separated and organized into two theorems.

\begin{figure}[ht]
    \centering
  \includegraphics[width=\linewidth]{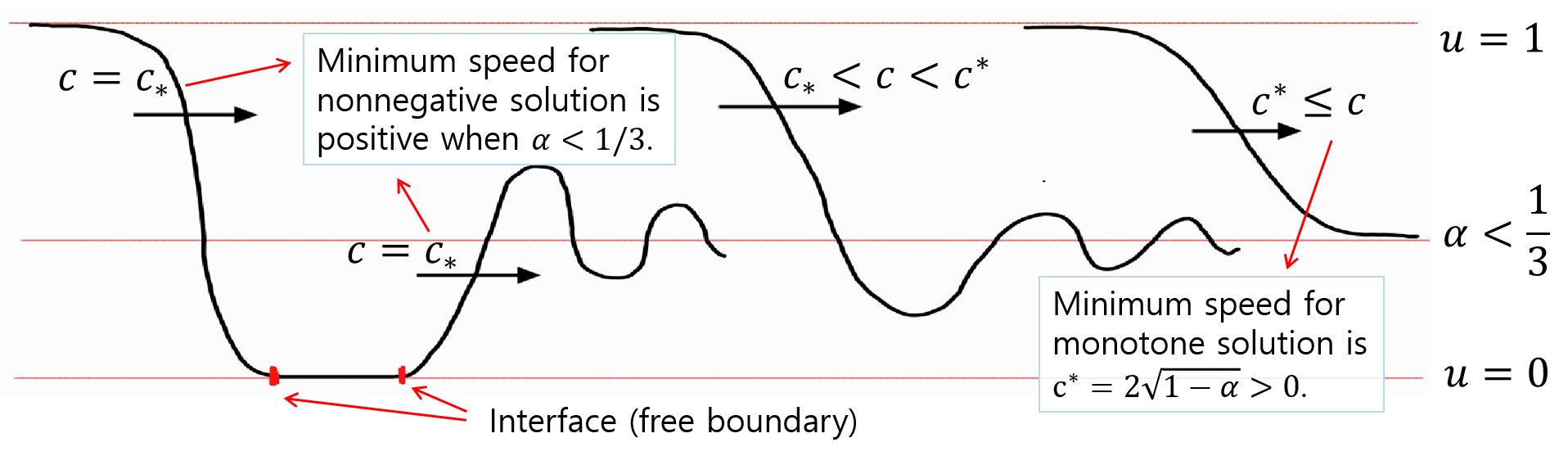}
  \caption{The solution profiles in Theorem~\ref{thm1} illustrate the link between monostable and bistable traveling waves for the case \(0<\alpha<1/3\).
}
  \label{NewFig1}
\end{figure}

\begin{theorem}\label{thm1}
Consider \eqref{tw} with $0 < \alpha < 1/3$.\\
(1). The solution of \eqref{tw} with the monostable boundary condition \eqref{a2} exists if and only if $c\geq c_*$ for some $c_*\in(0,c^*)$, where $c_*$ depends on $\alpha$. Furthermore,
\begin{enumerate}[(i).]
\item If $c \ge c^*$, the solution is unique, strictly positive, and monotone. 
\item If $c_*<c<c^*$, the solution is unique, strictly positive, and oscillates toward $\alpha$ as $\xi \to +\infty$.
\item If $c=c_*$, the solution is not unique; it either vanishes at a point or on an interval, and oscillates toward $\alpha$ as $\xi \to +\infty$. 
\end{enumerate}
(2). The solution of \eqref{tw} with the bistable boundary condition \eqref{a1} exists if and only if $c=c_*$. The solution is unique and monotone.
\end{theorem}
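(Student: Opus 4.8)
The plan is to analyze the second-order ODE \eqref{tw} via its phase portrait in the $(u,u')$-plane, treating the two regions $u>0$ and $u=0$ separately and gluing solutions at the free boundary. Writing $v = u'$, for $u>0$ the system is $u' = v$, $v' = -cv - (u-\alpha)(1-u)$, and the relevant critical points are $(1,0)$, $(\alpha,0)$, and the behavior near the line $u=0$ (where the reaction switches off, so there $v' = -cv$, giving trajectories $v = v_0 e^{-c\xi}$, i.e.\ $u$ affine-to-constant). For a monostable wave satisfying \eqref{a2} we need a heteroclinic-type orbit leaving $(1,0)$ along its unstable manifold and approaching $(\alpha,0)$ as $\xi\to+\infty$; for a bistable wave satisfying \eqref{a1} we need the orbit leaving $(1,0)$ to reach the line $u=0$ with $u'=0$ (so that it can be continued by $u\equiv 0$ in a $C^1$ fashion), and I claim this forces a unique speed $c=c_*$.

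First I would establish the easy linear facts: linearizing at $(1,0)$ gives eigenvalues $\lambda_\pm = \tfrac{1}{2}(-c\pm\sqrt{c^2+4(1-\alpha)})$, one positive and one negative, so $(1,0)$ is a saddle and there is a one-dimensional unstable manifold leaving it; for small $u-1<0$ the wave decreases, and standard arguments (as in Fisher--KPP/Aronson--Weinberger theory) show the orbit on this branch stays in $0<u<1$, $u'<0$ until it either converges to $\alpha$, oscillates about $\alpha$, or hits $u=0$. Linearizing at $(\alpha,0)$ gives eigenvalues $\mu_\pm = \tfrac12(-c\pm\sqrt{c^2-4(1-\alpha)})$: these are real (node, monotone convergence) when $c\ge c^* = 2\sqrt{1-\alpha}$ and complex (spiral, oscillatory convergence) when $c<c^*$. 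This already produces the trichotomy in part (1): for $c\ge c^*$ a monotone connection to $\alpha$, and for $c$ slightly below $c^*$ an oscillatory-but-still-positive connection, with the constraint that once $u'<0$ near $\alpha$ the spiral must not dip below $0$ — which is exactly what can fail as $c$ decreases, defining the threshold.

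The heart of the argument is a shooting/monotonicity argument in the speed $c$. Parametrize the unstable-manifold orbit from $(1,0)$; as $\xi$ increases $u$ decreases from $1$. Define $c_*$ as the infimum of speeds $c$ for which this orbit stays strictly positive for all $\xi$ (equivalently, spirals into $\alpha$ without crossing $u=0$). By continuous dependence on $c$ and a comparison/monotonicity property of the vector field in $c$ (larger $c$ means stronger damping, pushing the orbit "up" away from $u=0$; smaller $c$ means the orbit overshoots and crosses $u=0$), the set of admissible $c$ is a half-line $[c_*,\infty)$ with $c_*\in(0,c^*)$ since at $c=c^*$ the convergence is monotone hence certainly positive, and for $c$ small enough the overshoot is unavoidable (here $\alpha<1/3$ enters: a balance between the area $\int_\alpha^1 (s-\alpha)(1-s)\,ds$ on $(\alpha,1)$ and $\int_0^\alpha(\alpha-s)(1-s)\,ds$ on $(0,\alpha)$ — the latter being dominated by the former precisely when $\alpha<1/3$ — allows a genuinely positive oscillatory wave; for $\alpha\ge1/3$ this fails, which is why that case is a separate theorem). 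At exactly $c=c_*$ the orbit is tangent to $u=0$: it touches $u'=0$ either at an isolated point (then continues into $u>0$ and spirals to $\alpha$) or the touching produces a whole interval where $u\equiv 0$ before re-emerging — this non-uniqueness is case (1)(iii), and the sub-case where the orbit re-emerges and instead of spiraling back decays monotonically... no: once it re-emerges it is again a small perturbation near $u=0$ governed by the same dynamics, so it spirals to $\alpha$; the bistable wave \eqref{a1} is the selection where after touching $u=0$ we \emph{stop} and set $u\equiv0$ for all $\xi\ge0$, which is $C^1$ precisely because at $c=c_*$ the contact has $u'=0$. For any $c\neq c_*$ such a $C^1$ gluing is impossible (for $c>c_*$ the orbit never reaches $u=0$; for $c<c_*$ it reaches $u=0$ transversally with $u'<0$, so $u$ would go negative), giving existence and uniqueness of the bistable wave exactly at $c=c_*$, and its monotonicity follows because on $(-\infty,0)$ the orbit from $(1,0)$ to the contact point stays in $u'<0$ (no interior critical point of $u$ is possible since $(\alpha,0)$ is not on this orbit and $u'=0$ with $u\in(0,1)$ would force $u''=-(u-\alpha)(1-u)$ of a sign contradicting a local min/max consistent with the trajectory).

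The main obstacle I anticipate is making the shooting argument rigorous at the threshold $c=c_*$: proving that the ``first contact with $u=0$'' depends continuously (and monotonically) on $c$, ruling out pathological oscillations that touch $u=0$ tangentially from above without the clean dichotomy claimed, and pinning down that at $c=c_*$ the contact is exactly with $u'=0$ (not a grazing sequence of ever-smaller oscillations). I would handle this with a Lyapunov-type functional $E(\xi) = \tfrac12 u'^2 + \int_\alpha^{u} (s-\alpha)(1-s)\,ds$, whose derivative is $-c\,u'^2 \le 0$ for $c>0$, to control the energy budget between the two humps of the potential and to convert ``the orbit reaches $u=0$'' into a sharp inequality on $c$; the condition $\alpha<1/3$ is what makes the potential well at $u=0$ shallow enough (relative to the barrier at $u=\alpha$) for a solution with $E\to 0^+$-type behavior to exist. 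The uniqueness statements then follow from the saddle structure at $(1,0)$ (one-dimensional unstable manifold) together with the rigidity of the $C^1$-gluing condition at the free boundary.
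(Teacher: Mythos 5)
Your proposal follows essentially the same route as the paper: a phase-plane analysis with the energy functional $E(u,w)=\tfrac12 w^2-\int_u^1(s-\alpha)(1-s)\,ds$ (your Lyapunov functional is this up to an additive constant, and your area-balance criterion $\int_0^1(s-\alpha)(1-s)\,ds>0\iff\alpha<1/3$ is exactly the paper's observation that $(0,0)\in\Omega$ iff $\alpha<1/3$), a shooting argument monotone in $c$ along the unstable manifold of the saddle $(1,0)$ (the paper makes this rigorous by showing $\partial_c w^->0$ for the backward ODE of $w$ as a function of $u$, Lemma~\ref{Lem5}), and a $C^1$ gluing with the trivial state $u\equiv0$ at the tangential contact to produce the non-uniqueness in (iii) and the unique bistable wave at $c=c_*$. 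The technical obstacles you flag (continuity and monotonicity of the first-contact point in $c$, and that the contact at $c=c_*$ occurs with $u'=0$) are precisely the points the paper resolves with the $\partial_c w^\pm>0$ computation, so the plan is sound and matches the published argument.
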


Statements analogous to $(i)$ and $(ii)$ of this theorem also hold for the Allen–Cahn type reaction case. The key difference arises in $(iii)$: here, non-uniqueness is caused by the discontinuity of the reaction function at $u = 0$, which violates the Lipschitz continuity assumption that ensures uniqueness in ODE systems. If $c > c_*$, the solution remains positive. If $c = c_*$, the solution touches zero and non-uniqueness occurs. For instance, one such solution increases immediately as $\xi$ increases and then oscillates around $u = \alpha$. Another may remain at zero over some interval before increasing, as illustrated in Figure~\ref{NewFig1}. All of these are traveling wave solutions of \eqref{tw} connecting $u = 1$ and $u = \alpha$.

If the solution remains at zero beyond the interface, then it does not satisfy the boundary condition~\eqref{a2}. Instead, it is the unique solution satisfying the bistable boundary condition~\eqref{a1}, as stated in the second part of Theorem~\ref{thm1}. It is well known that, for the Allen–Cahn type reaction, the traveling wave speed connecting two stable equilibria is unique. The theorem shows that this property also holds for the discontinuous reaction case considered in this paper. This bistable traveling wave is expected to be the stable one that emerges from general initial data (see Figure~\ref{Fig1_1}). The positivity of its speed, $c_* > 0$, indicates propagation of the species. Hence, we conclude that if $0 < \alpha < 1/3$, the species overcomes the Allee effect and expands over the space.

\begin{figure}[ht]
    \centering
   \includegraphics[width=\linewidth]{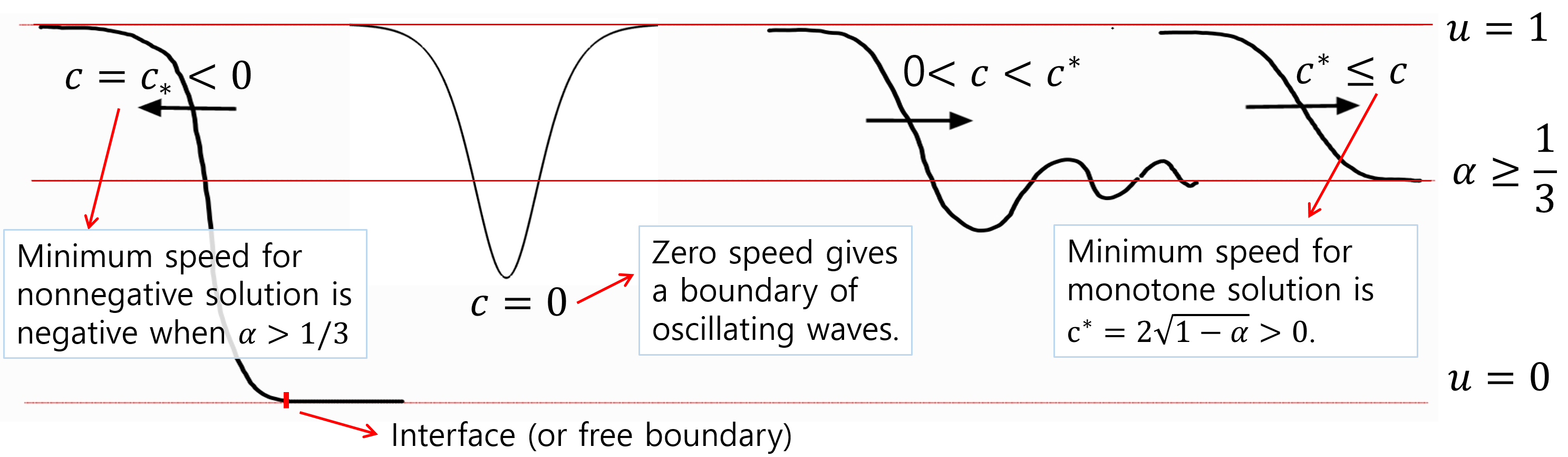}
  \caption{The solution profiles in Theorem~\ref{thm2} illustrate the link between monostable and bistable traveling waves for the case $1/3\le\alpha<1$.   }
  \label{NewFig2}
\end{figure}

Next, we consider the case $1/3\leq \alpha<1$. 
\begin{theorem}\label{thm2}
Consider \eqref{tw} with  $1/3\le \alpha<1$.\\ 
(1). The solution of \eqref{tw} with the monostable boundary condition \eqref{a2} exists if and only if $c>0$. Furthermore,
\begin{enumerate}[(i).]
\item If $c \ge c^*$, the solution is unique, strictly positive, and monotone. 
\item If $0<c<c^*$, the solution is unique, strictly positive, and oscillates toward $\alpha$ as $\xi \to +\infty$.
\end{enumerate}
(2). For each $\alpha\in[1/3,1)$, there exists a unique speed $c_*\leq 0$ such that the solution of \eqref{tw} with the bistable boundary condition \eqref{a1} exists if and only if $c=c_*$. The solution is unique and monotone. We have $c_\ast =0$ for $\alpha=1/3$, and $c_\ast <0$ for $1/3<\alpha<1$.
\end{theorem}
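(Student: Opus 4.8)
The plan is to carry out a phase-plane and shooting analysis of \eqref{tw}. Writing $v=u'$, the profile equation on $\{u>0\}$ becomes $u'=v$, $v'=-cv-f(u)$ with $f(u)=(u-\alpha)(1-u)$; it has a saddle at $(1,0)$ for every $c$, while $(\alpha,0)$ is a stable node for $c\ge c^*$, a stable focus for $0<c<c^*$, a center for $c=0$, and a source for $c<0$. Two scalar objects drive everything. First, the potential $G(u):=\int_0^u f(s)\,ds=-\tfrac13u^3+\tfrac{1+\alpha}{2}u^2-\alpha u$, for which $G(0)=0$ and $G(1)=\tfrac{1-3\alpha}{6}$. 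Second, the energy $E:=\tfrac12 v^2+G(u)$, which along any trajectory satisfies $E'=-cv^2$. Since $G'=f<0$ on $(-\infty,\alpha)$, the equation $G(u)=G(1)$ has a unique root $u_0\in(-\infty,\alpha)$, and from $G(0)=0$ one gets $u_0\ge 0\iff G(1)\le 0\iff\alpha\ge\tfrac13$, with $u_0=0$ precisely when $\alpha=\tfrac13$. This is the origin of the threshold $1/3$.

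For part (1), I would first note that on $\alpha<u<1$ the substitution $w=(u-\alpha)/(1-\alpha)$ turns \eqref{tw} into the logistic profile equation $w''+cw'+(1-\alpha)w(1-w)=0$, whose wave theory is classical: for $c\ge c^*=2\sqrt{1-\alpha}$ the unstable manifold of the saddle descends monotonically to $(\alpha,0)$, the unique strictly positive monotone wave of (i); for $0<c<c^*$ it must dip below $u=\alpha$. In that case $E'=-cv^2\le 0$ gives, at the first turning point, $G(u_{\min}(c))<G(1)=G(u_0)$, hence $u_{\min}(c)>u_0\ge 0$ because $G$ is decreasing on $(0,\alpha]$; so the trajectory never reaches the discontinuity $u=0$. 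The orbit cannot turn around in $\{u\ge 1\}$ (a local maximum there would require $v'=-f(u)>0$), and with $E$ strictly decreasing it cannot re-enter $\{u\ge 1\}$ from below; it is therefore confined to a compact subset of $\{0<u<1\}$ whose only equilibrium is the focus $(\alpha,0)$, on which $E$ is a strict Lyapunov function, so it spirals into $(\alpha,0)$ — the strictly positive, oscillating wave of (ii), unique because the other branch of the unstable manifold has $u\to\infty$ and is excluded by boundedness. For $c\le 0$ no wave satisfying \eqref{a2} exists, since $(\alpha,0)$ is then not attracting: at $c=0$ the relevant branch is homoclinic to $(1,0)$ if $\alpha>1/3$ and runs into $(0,0)$ if $\alpha=1/3$, and for $c<0$ it is a source, so no orbit has $\omega$-limit $(\alpha,0)$.

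For part (2), I would run a shooting argument in $c$. Let $\Gamma_c$ be the branch of the unstable manifold of $(1,0)$ with $u<1$, $v<0$, followed until $u$ first reaches $0$ or $v$ first returns to $0$; set $\psi(c):=u_{\min}(c)\ge 0$ in the latter case and $\psi(c):=v|_{u=0}<0$ in the former. A solution of \eqref{a1} with speed $c$ exists iff $\Gamma_c$ reaches $(0,0)$, i.e. $\psi(c)=0$ — the continuation by $u\equiv 0$ being a legitimate $C^1$ solution at the non-Lipschitz point since $v=0$ there. Three facts close the proof. (a) $\psi$ is continuous and strictly increasing in $c$: along $\Gamma_c$ the quantity $p(u)=v^2$ solves $p'(u)=2c\sqrt{p}-2f(u)$ on the descent, with $p\sim\lambda_+(c)^2(1-u)^2$ near $u=1$ where $\lambda_+(c)=\tfrac12(-c+\sqrt{c^2+4(1-\alpha)})$; since the right-hand side is strictly increasing in $c$, an ODE-comparison argument shows $p(\cdot\,;c)$ is ordered in $c$, hence $\psi$ is strictly increasing, and continuity (including across the transition between the two regimes) follows from continuous dependence plus a limiting argument at the interface. (b) $\psi(0)=u_0\ge 0$, with equality iff $\alpha=1/3$. (c) $\psi(c)<0$ for $c$ sufficiently negative: as $\lambda_+(c)\to+\infty$ when $c\to-\infty$, $|v|$ becomes large as soon as $u$ drops slightly below $1$, and then $v'=|c|v-f(u)<0$ drives $u$ down through $0$ with $v<0$. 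By the intermediate value theorem and (a) there is a unique $c_*\le 0$ with $\psi(c_*)=0$, and $c_*=0$ iff $\alpha=1/3$. For $\alpha>1/3$, multiplying $u''+cu'+f(u)=0$ by $u'$ and integrating over $(-\infty,0]$ with $u(-\infty)=1$, $u'(-\infty)=0$, $u(0)=u'(0)=0$ yields $c_*\int_{-\infty}^0(u')^2\,d\xi=G(1)=\tfrac{1-3\alpha}{6}<0$, so $c_*<0$. Finally, for $c\le 0$ the energy $E$ is nondecreasing, so if $\Gamma_c$ passes a first local minimum at $u_{\min}>0$ then $E\ge G(1)$ afterwards, which forces the orbit to reach $u=1$ with nonzero speed and escape to $+\infty$ (if $c<0$) or to close up homoclinically (if $c=0$) — neither compatible with \eqref{a1}. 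Hence a solution of \eqref{a1} must descend monotonically from $1$ to $0$, and this trajectory is unique up to the translation placing the interface at $\xi=0$.

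The step I expect to be the real obstacle is (a): proving $\psi$ is continuous and strictly monotone in $c$. Strict monotonicity is what pins $c_*$ down uniquely, and the comparison for $p(u)=v^2$ must be handled carefully because $\sqrt{p}$ is not Lipschitz both at the turning point ($v=0$) and at the interface ($u=0$); the expansion $p\sim\lambda_+(c)^2(1-u)^2$ at the saddle is needed to start the comparison, and continuity of $\psi$ across the transition between the undershoot and overshoot regimes needs a short separate limiting argument near $u=0$. Once the shooting function is under control, the remaining ingredients — the closed forms $G(0)=0$, $G(1)=\tfrac{1-3\alpha}{6}$, the identity $E'=-cv^2$, the escape/homoclinic dichotomy for $c\le 0$, and the sign of $c_*$ — are all short.
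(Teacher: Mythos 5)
Your proposal is correct and follows essentially the same strategy as the paper: phase-plane analysis with the energy $E=\tfrac12 w^2+G(u)$ (whose value at $u=0$ relative to $G(1)=\tfrac{1-3\alpha}{6}$ produces the threshold $\alpha=1/3$, cf.\ \eqref{OmegaAlp}) for part (1), and a shooting argument in $c$ based on strict monotone dependence on $c$ of the landing point of the relevant manifold of $(1,0)$ for part (2). The only differences are technical: the paper obtains the monotonicity of the shooting map from the variational equation for $\partial_c w^{\pm}$ in Lemma~\ref{Lem5} rather than your ODE comparison for $p=v^2$ (an equivalent statement, since $\partial_c p=2w\,\partial_c w$), reduces the case $c_*\le 0$ to $c\ge 0$ via the symmetry \eqref{Symmetry} instead of shooting directly with negative speeds, and proves monotonicity for $c\ge c^*$ with an invariant line $w=m(u-\alpha)$ rather than by invoking classical KPP theory.
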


An important distinction in the case \( \alpha > 1/3 \) is that the bistable wave propagates with a \emph{negative} speed \( c_* < 0 \). Physically, this corresponds to \emph{species extinction}, while mathematically, it highlights the role of another critical wave speed, namely zero. When \( c = 0 \), the solution is symmetric, as illustrated in Figure~\ref{NewFig2}, and it connects the state \( u = 1 \) to itself. In contrast, when \( c < 0 \), the solution diverges beyond the three equilibria and no longer satisfies the boundary condition \eqref{a2}. Hence, traveling wave solutions of \eqref{tw} connecting distinct equilibria do not exist for $c<0$ with \(c\neq c_\ast\). Numerical simulations (see Figure~\ref{Fig3_1}) indicate that the traveling wave with speed $c_\ast<0$ is stable. In Section~\ref{Sec3_1}, based on this traveling wave, we show that a compactly supported initial population becomes extinct in finite time when $\alpha>1/3$.

Lastly, the third type of traveling wave solution connects the equilibrium states \(u = 0\) and \(u = \alpha\), subject to the boundary conditions
\begin{equation}\label{a4}
u(\xi) = 0 \quad \text{for } \xi \le 0, 
\qquad \lim_{\xi \to +\infty} u(\xi) = \alpha.
\end{equation}
Although this case also corresponds to a monostable traveling wave, it represents a \emph{pushed wave}, which is distinct from the classical Fisher–KPP type wave.

\begin{figure}[ht]
    \centering
    \includegraphics[width=\linewidth]{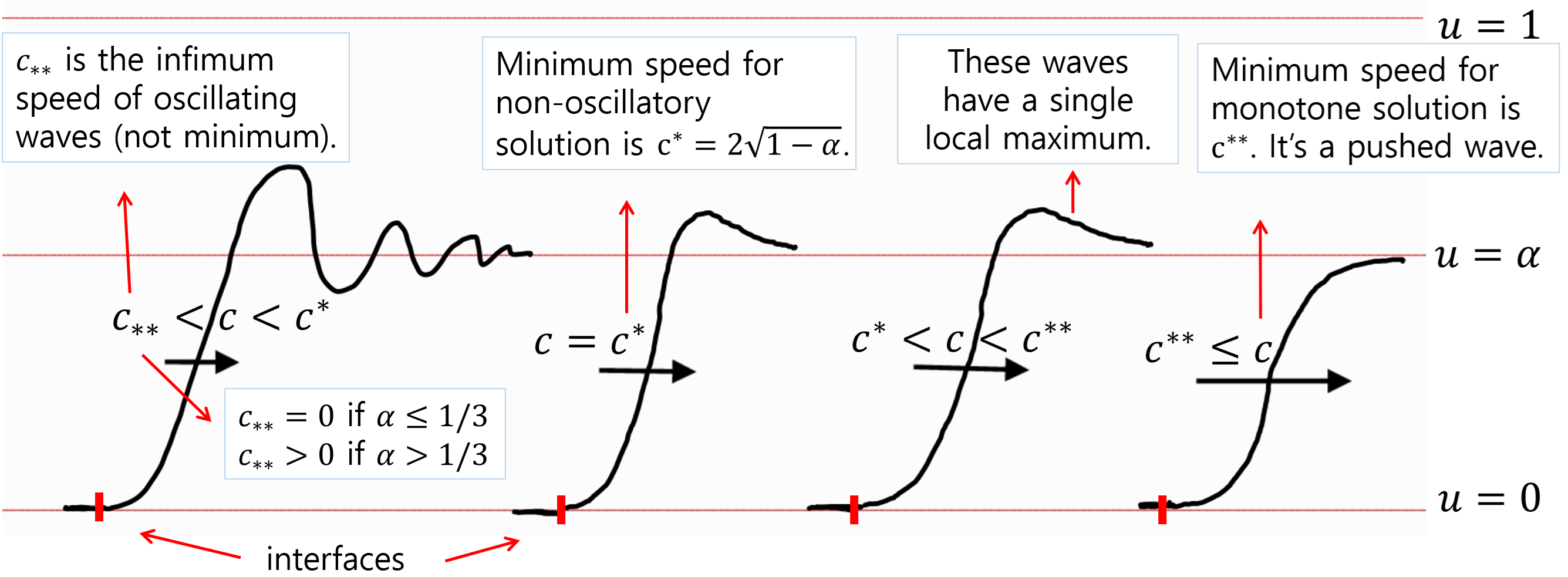}
  \caption{Pushed traveling waves introduced in Theorem~\ref{thm3}.}
  \label{NewFig3}
\end{figure}

\begin{theorem}\label{thm3} Consider \eqref{tw} with  $0<\alpha<1$ and the monostable boundary condition \eqref{a4}. \\
(1). There exists a constant $c_{**}\ge0$ such that the solution of \eqref{tw} with \eqref{a4} exists if and only if $c_{**}<c$. In particular, $c_{**}=0$ if $0<\alpha\le1/3$,  and $c_{\ast\ast}=-c_*>0$ if $\alpha>1/3$, where $c_*$ is the one in Theorem \ref{thm2}.\\
(2). There exists $c^{**}\in [\max(c_{**},c^*),2]$, where the followings hold:
\begin{enumerate}[(i).]
    \item If $c\ge c^{**}$, the solution is monotone.
    \item If $c\in[c^*,c^{**})$, the solution has a single local maximum.
    \item  If $c\in(c_{**},c^*)$, the solution oscillates toward $\alpha$ as $\xi \to +\infty$. 
\end{enumerate}
\end{theorem}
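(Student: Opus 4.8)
The strategy is to analyze \eqref{tw} in the phase plane. Writing $p=u'$ and $f(u)=(u-\alpha)(1-u)$, on the region $\{u>0\}$ the equation becomes the planar system $u'=p$, $p'=-cp-f(u)$, whose equilibria are $(\alpha,0)$ — a stable node for $c\ge c^\ast=2\sqrt{1-\alpha}$ and a stable focus for $0<c<c^\ast$, since $f'(\alpha)=1-\alpha>0$ — together with the saddle $(1,0)$, since $f'(1)=\alpha-1<0$. The point $(0,0)$ is a regular point of this smooth vector field, equal there to $(0,\alpha)$; hence the forward orbit $\Gamma_c$ issuing from $(0,0)$ enters $\{u>0,\ p>0\}$ with $u(\xi)\sim\tfrac\alpha2\xi^2$ near $\xi=0$, and it is uniquely determined, which at once gives uniqueness (up to translation) of the solution of \eqref{tw}, \eqref{a4}. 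A solution of \eqref{tw}, \eqref{a4} is exactly an orbit $\Gamma_c$ converging to $(\alpha,0)$. Throughout I will use the mechanical energy $E=\tfrac12 p^2+F(u)$ with $F(u)=\int_0^u f$, which satisfies $E'=-cp^2$; the relevant numerology is $F(0)=0$, $F(\alpha)=\tfrac16\alpha^2(\alpha-3)<0$, and $F(1)=\tfrac16(1-3\alpha)$, whose sign is precisely the dichotomy $\alpha\lessgtr1/3$.

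For part (1) I would run a shooting argument in $c$. While $\Gamma_c$ lies in $\{p>0\}$ it is a graph $p=p_c(u)$ with $\tfrac{dp_c}{du}=-c-f(u)/p_c$; for $c_1<c_2$ the graphs cannot cross (at an intersection the $c_2$-graph has strictly smaller slope, while an expansion of $p_c^2$ near $u=0^+$ gives $p_{c_2}<p_{c_1}$), so $p_{c_2}<p_{c_1}$ wherever both are defined. Hence the set of $c$ for which $\Gamma_c$ crosses $u=1$ with $p>0$ — and then escapes to $u\to+\infty$, since $f<0$ on $u>1$ keeps $p$ positive — is a downward ray $(-\infty,c_{**})$, and for $c>c_{**}$ the orbit turns back before reaching $u=1$. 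One checks that $c=0$ never yields a solution of type \eqref{a4} — with $E\equiv0$ the orbit from $(0,0)$ is periodic when $\alpha<1/3$, escapes when $\alpha>1/3$ (as $F<0$ on $(0,\infty)$ there), and connects to the saddle $(1,0)$ when $\alpha=1/3$ — so $c_{**}\ge0$, and in particular $c>c_{**}$ forces $c>0$, whence $E$ is strictly decreasing. For such $c$, $\Gamma_c$ stays in $\{0<u<1\}$ (it cannot return to $u=0$, as $E$ would have to return to its initial value), is bounded, and by LaSalle's invariance principle converges to an equilibrium, which must be $(\alpha,0)$; at the threshold $c=c_{**}$ the orbit instead limits to the saddle $(1,0)$ and is not of type \eqref{a4}. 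This proves existence iff $c>c_{**}$. To identify $c_{**}$: if $0<\alpha\le1/3$ then $F(1)\ge0=E(0)$, so for every $c>0$ the decreasing $E$ keeps $\Gamma_c$ away from $u=1$, whence $c_{**}=0$; if $1/3<\alpha<1$ then $c_{**}>0$ by the above, and applying the symmetry \eqref{Symmetry} to the bistable wave of Theorem~\ref{thm2} (which connects $u=1$ at $-\infty$ to $u\equiv0$ on $\xi\ge0$ at speed $c_\ast<0$) produces an orbit from $(0,0)$ to $(1,0)$ at speed $-c_\ast>0$, so $c_{**}=-c_\ast$ by uniqueness of the connecting speed.

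For part (2), since $\tfrac{dp_c}{du}\to+\infty$ as $p_c\to0^+$ with $0<u<\alpha$, the orbit $\Gamma_c$ cannot return to $p=0$ while $u<\alpha$; hence every critical point of $u(\xi)$ lies at $u>\alpha$, and either $\Gamma_c$ stays in $\{0<u<\alpha\}$ and converges monotonically to $(\alpha,0)$, or it crosses $u=\alpha$ with $p>0$. If $c\ge c^\ast$ then $(\alpha,0)$ is a node, the converging orbit is eventually monotone, so $u$ has at most one critical point (two would force, by the alternation of maxima and minima together with $u\to\alpha$, infinitely many): thus $u$ is either monotone (case (i)) or rises to a single maximum and then decreases monotonically to $\alpha$ (case (ii)). If $c_{**}<c<c^\ast$ then $(\alpha,0)$ is a focus, $\Gamma_c$ spirals in, and $u-\alpha$ changes sign infinitely often (case (iii)). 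Finally, the monotonicity $p_{c_2}<p_{c_1}$ shows that the set of $c$ for which $\Gamma_c$ is monotone is an upper ray $[c^{**},\infty)$ (the endpoint included by continuous dependence); it contains $[2,\infty)$, because for $c\ge2$ the region $\{0\le u\le\alpha,\ 0\le p\le\tfrac c2(\alpha-u)\}$ is positively invariant — a one-line check of the vector field along its slanted edge, using $1-u<1\le c^2/4$ — and contains $(0,0)$, forcing monotone convergence to $(\alpha,0)$; hence $c^{**}\le2$. Together with $c^{**}\ge c^\ast$ (from case (iii)) and $c^{**}\ge c_{**}$ (solutions exist only for $c>c_{**}$), this yields $c^{**}\in[\max(c_{**},c^\ast),2]$.

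I expect the principal obstacle to be making the shooting argument fully rigorous through turning points, where the graph description $p=p_c(u)$ breaks down: one must establish continuous and, in the appropriate sense, monotone dependence of $\Gamma_c$ on $c$ past the first maximum, rule out every asymptotic behavior other than escape, convergence to $(\alpha,0)$, or convergence to $(1,0)$ — the periodic orbit at $c=0$, $\alpha<1/3$ shows such alternatives genuinely occur just outside the admissible range — and confirm that the threshold for landing on the one-dimensional stable manifold of $(1,0)$ is a single value of $c$, which is what makes $c_{**}$, and likewise $c^{**}$, well defined.
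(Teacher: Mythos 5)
There is a genuine gap in your argument for part (2)(ii). You assert that for $c\ge c^\ast$ the orbit has at most one critical point because ``two would force, by the alternation of maxima and minima together with $u\to\alpha$, infinitely many.'' This does not follow. Maxima of $u$ occur at heights $>\alpha$ and minima at heights $<\alpha$, but neither eventual monotonicity near the node nor this alternation excludes the configuration of exactly one maximum above $\alpha$, then exactly one minimum below $\alpha$, then a monotone increase to $\alpha$ from inside the quadrant $\{u<\alpha,\ w>0\}$: for a stable node, generic orbits approach $(\alpha,0)$ tangent to the eigendirection $(1,\lambda_+)$ with $\lambda_+<0$, hence precisely from that quadrant, so the local analysis cannot rule this scenario out. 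What closes the gap is a global barrier, which is exactly how the paper argues: for $c\ge c^\ast$ choose $m<0$ with $m^2+cm+(1-\alpha)\le 0$ (e.g.\ $m=-c/2$); then the region $\{\alpha\le u\le 1,\ m(u-\alpha)\le w\le 0\}$ is forward invariant, the orbit enters it at its first maximum $(u(\xi_0),0)$ with $\alpha<u(\xi_0)<1$ (where $w'=-f(u(\xi_0))<0$), and therefore decreases monotonically to $\alpha$ afterwards, never re-entering $\{u<\alpha\}$. This is the line-barrier argument of Proposition~\ref{Proposi_1}, reused in the paper's proof of Theorem~\ref{thm3}(2); since you already build the analogous invariant triangle to show $2\in A$, the repair is entirely within your toolkit. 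A second, more minor omission: your nonexistence discussion treats $c=0$ and $0<c\le c_{**}$ but not $c<0$, where the energy $E$ increases from $0$ and so can never reach $E(\alpha,0)=F(\alpha)<0$.

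Apart from this, your strategy coincides with the paper's: phase plane for the smooth system on $\{u>0\}$, the Lyapunov function $E$ with $E'=-cw^2$, invariant triangles for $c\ge 2$, gluing with $u\equiv 0$ at the free boundary, and a shooting argument in $c$. The one structural difference is the direction of the shooting: you shoot the forward orbit from $(0,0)$ and order the graphs $p_c(u)$ by a non-crossing comparison, whereas the paper shoots the stable-manifold branch $w^+(\cdot\,;c)$ backward from the saddle $(1,0)$ and proves strict monotonicity of its vanishing point $u_c^+$ in $c$ via the variational equation (Lemma~\ref{Lem5}); both identify $c_{**}=-c_*$ through the symmetry \eqref{Symmetry}. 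Your observation that the comparison $p_{c_2}<p_{c_1}$ makes the set of monotone speeds an upper ray is in fact slightly more explicit than the paper's treatment of $c^{**}=\min A$. The remaining technical points you flag yourself (continuity of the shooting through turning points, uniqueness of the connection threshold) are real but standard and are what Lemma~\ref{Lem5} supplies in the paper.
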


In Theorem~\ref{thm3}, it holds that \(c_{**}<c^{**}\) (see the proof of Theorem~\ref{thm3}). However, the precise relations involving the wave speed \(c^\ast\) have not yet been fully established. For example, if it holds that \(c^\ast \leq c_{\ast\ast}\), then no oscillatory solution exists. Similarly, if \(c^\ast = c^{\ast\ast}\), then there exists no solution with exactly one local maximum. The numerical plot in Figure~\ref{Fig4_1} reveals that the interval \((c^\ast, c^{**})\) is not empty. Clarifying these relations would complete the theoretical picture, and we suggest that further attention be directed to this open problem.
 
\begin{figure}[ht]
\centering
\includegraphics[width=\linewidth]{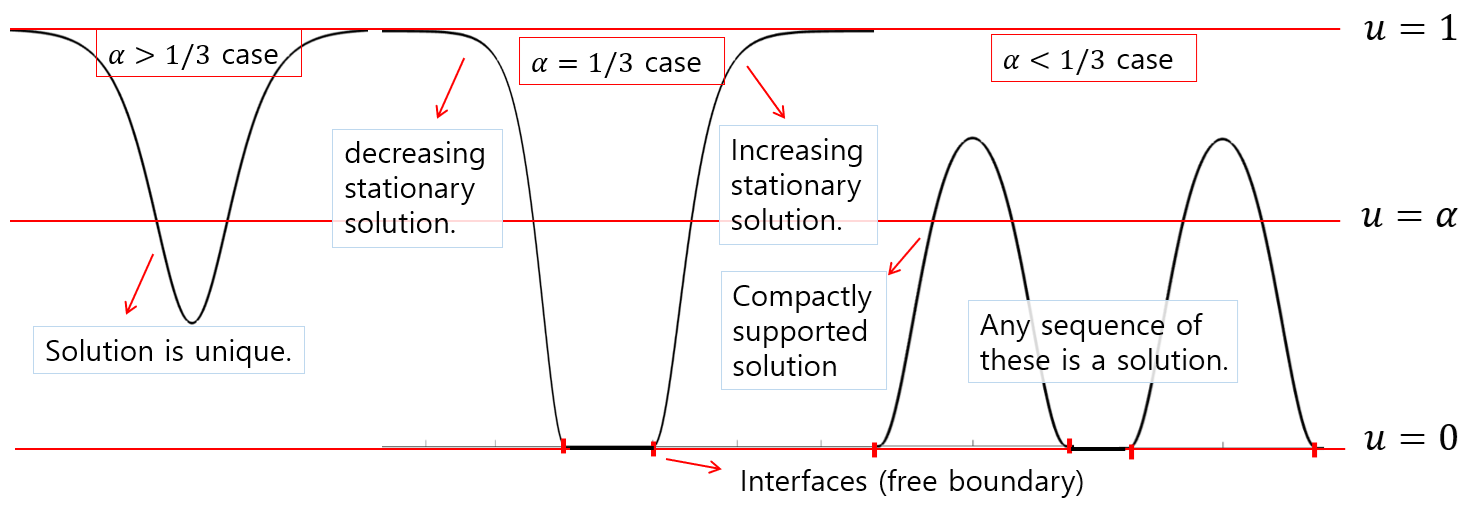}
  \caption{Stationary solutions introduced in Theorem~\ref{thm4}.}
\label{NewFig4}
\end{figure}

Stationary solutions are of particular importance. We summarize them in the following theorem.

\begin{theorem}\label{thm4}
Consider the solution of \eqref{tw} with $c=0$. \\
(1). If \( \alpha < 1/3 \), there exists a solution with a connected compact support and maximum bounded by \(\alpha < \max u(\xi) < 1\). Moreover, any sequence of such solutions also constitutes a solution.\\
(2). If $\alpha=1/3$, the bistable solution of Theorem~\ref{thm2} is a stationary solution to \eqref{tw}. Additional solutions can be constructed by gluing it with its reflection, separated by an interval of various sizes. \\
(3). If $\alpha>1/3$, the solution satisfying \(u(\pm\infty)=1\) is unique, and it holds that \(0<\min u(\xi)<\alpha\). 
\end{theorem}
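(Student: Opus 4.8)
The plan is to exploit the conservative structure of \eqref{tw} at $c=0$. On any interval where $u>0$, setting $c=0$ reduces \eqref{tw} to $u''=(u-\alpha)(u-1)$, while on any interval where $u\equiv0$ the equation holds trivially. Introduce the potential
\begin{equation}\label{pot}
W(u)=-\tfrac13 u^{3}+\tfrac12(1+\alpha)u^{2}-\alpha u,\qquad u\ge 0,
\end{equation}
so that $W'(u)=(u-\alpha)(1-u)$, $W(0)=0$, and the energy $H:=\tfrac12(u')^{2}+W(u)$ is constant along every solution (on each interval where $u>0$, and trivially where $u\equiv0$). Then $W$ decreases on $(0,\alpha)$, increases on $(\alpha,1)$, and decreases on $(1,\infty)$, with
\[
W(\alpha)=\frac{\alpha^{2}(\alpha-3)}{6}<0,\qquad W(1)=\frac{1-3\alpha}{6},\qquad W(\alpha)-W(1)=\frac{(\alpha-1)^{3}}{6}<0.
\]
Hence $W(\alpha)<\min\{0,W(1)\}$ for every $\alpha\in(0,1)$, while $W(1)$ is positive, zero, or negative exactly when $\alpha<1/3$, $\alpha=1/3$, or $\alpha>1/3$. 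This trichotomy is what produces the three cases: in each one I would identify the relevant orbit in the $(u,u')$ phase plane from a level set of $H$, and then extend it by zero across the free boundary.

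For (1), $\alpha<1/3$: since $W(\alpha)<0<W(1)$, the connected component of $\{W\le0\}$ containing $0$ is an interval $[0,u_{1}]$ with $u_{1}\in(\alpha,1)$ the unique positive zero of $W$. On the level set $H=0$, the orbit issuing from the interface point $(u,u')=(0,0)$ rises (the field there is $(0,\alpha)$), reaches $u=u_{1}$ with $u'=0$ and $u''=(u_{1}-\alpha)(u_{1}-1)<0$, and returns symmetrically to $(0,0)$; the only quantitative point is that the ``time'' $\int du/\sqrt{-2W(u)}$ is finite at both ends (near $u=0$ one has $-W(u)\sim\alpha u$, near $u_{1}$ a nondegenerate turning point). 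Extending by $0$ outside this bounded interval yields a $C^{1}$ solution with connected compact support and $\alpha<\max u=u_{1}<1$, with $u''$ jumping only at the two interface points. Finally, any function obtained by placing translates of this bump along $\mathbb{R}$ with pairwise disjoint supports (gaps of arbitrary length, possibly zero) is again a solution: the ODE holds on each bump interval, $u\equiv0$ on each gap, and $u=u'=0$ at every junction, so the $C^{1}$ matching and the equation are preserved — this is the content of the last sentence of (1).

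For (2), $\alpha=1/3$: now $W(1)=0=W(0)$ and $W<0$ on $(0,1)$, so on $H=0$ there is a single orbit running monotonically from the hyperbolic saddle $u=1$ (the linearization there has eigenvalues $\pm\sqrt{1-\alpha}$) down to the interface point $(0,0)$, attained at a finite $\xi$ with $u'=0$; extended by $0$ this is precisely the $c_{*}=0$ bistable profile of Theorem~\ref{thm2}. Gluing that profile to its reflection across a zero-plateau of arbitrary length $\ge0$ — again $C^{1}$ at the two interface points, where $u=u'=0$ — produces the stated family of stationary solutions connecting $u=1$ to itself. For (3), $\alpha>1/3$: now $W(1)<0=W(0)$, and $W(\alpha)<W(1)$ forces a unique $u_{-}\in(0,\alpha)$ with $W(u_{-})=W(1)$; on the level set $H=W(1)$ there is the homoclinic loop to the saddle $u=1$ that descends to $u=u_{-}$ (where $u'=0$, $u''=(u_{-}-\alpha)(u_{-}-1)>0$) and returns, staying in $[u_{-},1]$ and never meeting the interface, hence a smooth solution with $u(\pm\infty)=1$ and $\min u=u_{-}\in(0,\alpha)$. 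For uniqueness I would argue that any bounded solution with $u(\pm\infty)=1$ has $H=W(1)$ (evaluate as $\xi\to\pm\infty$), cannot reach $u=0$ (that needs $W(1)=H\ge W(0)=0$), and cannot exceed $u=1$ (there $(u')^{2}=2(W(1)-W(u))>0$ for all $u>1$, forcing $u\to\infty$), so it lies on the above homoclinic orbit and, by translation invariance and uniqueness for the smooth ODE $u''=(u-\alpha)(u-1)$, coincides with that profile up to a shift.

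The main obstacle is not the phase-plane bookkeeping, which is elementary once \eqref{pot} is recorded, but the care needed at the free boundary: one must verify that the extended-by-zero and glued functions are genuinely $C^{1}$ on $\mathbb{R}$ and solve \eqref{tw} in the paper's classical-with-interface sense — the only admissible defect being a jump of $u''$ at points where $u=0$ — and, in part (3), rigorously rule out solutions that touch the interface or escape past $u=1$, so that the uniqueness assertion is airtight.
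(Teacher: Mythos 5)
Your proposal is correct and follows essentially the same route as the paper: the conserved quantity $H=\tfrac12(u')^2+W(u)$ is exactly the paper's level function $E(u,w)$ shifted by the constant $W(1)$, and your case analysis reproduces the paper's trichotomy \eqref{OmegaAlp} on whether $(0,0)$ lies inside, on, or outside the closed level curve $E=0$, after which the paper likewise glues the orbits of Proposition~\ref{Pro_stead} with the trivial solution as in the proof of Theorem~\ref{thm1}. You in fact supply more detail than the paper (which omits the gluing details and the uniqueness argument in part (3)), and your finite-time-of-arrival and interface-regularity checks are the right ones.
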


We remark that the stationary solution with a single bump in Theorem~\ref{thm4}(1) for the case \(\alpha<1/3\) coincides with that studied in \cite{CKKP}, where the associated Dirichlet–Neumann problem was considered. Moreover, numerical simulations (see Figure 6 in \cite{CKKP}) suggest that this solution serves as a threshold, separating population expansion from finite-time extinction. Therefore, it is asymptotically unstable. In Section~\ref{Sec3_1}, we present an improved version of the finite-time extinction result (Theorem 6.2 in \cite{CKKP}) and establish the emergence of free boundaries: compactly supported initial data evolve with free boundaries. In contrast, the stationary solution with a single dip for the case \(\alpha>1/3\) represents the dual case, serving as a threshold that separates population shrinkage from recovery to \(u=1\). Hence, it is also asymptotically unstable. The stationary solution in the borderline case \(\alpha=1/3\) appears to be asymptotically stable according to the numerical results (see Figure~\ref{Fig8_1}).

Perhaps the most distinctive property of the traveling wave solutions introduced above is the emergence of an interface, or free boundary. When a traveling wave solution touches the value \(0\), a free boundary arises that separates the region of positive values from the zero region. Such free boundaries occur in a wide variety of problems. From the perspective of traveling waves for reaction–diffusion equations, free boundaries may arise. For instance, Hilhorst \emph{et al.}~\cite{Hilhorst2008} showed that in the Fisher–KPP equation with degenerate diffusion, the traveling wave solution with minimal speed possesses a free boundary, whereas all other traveling wave solutions do not. By contrast, in the classical Fisher–KPP equation with linear diffusion, no free boundary arises. Two mechanisms account for this: first, an intrinsic property of linear diffusion ensures that even when the support of the initial data is compact, the solution of the heat equation remains strictly positive for all $t>0$; second, certain traveling waves are generated not by diffusion but by synchronized growth induced by the reaction term (see~\cite{Hilhorst2016}), a mechanism that requires the presence of a tail. Consequently, except for the minimal-speed monotone traveling wave, none of the traveling waves of the Fisher–KPP equation gives rise to a free boundary, regardless of modifications to the diffusion operator.  

A system with linear diffusion that admits a free boundary can be found in models with Stefan-type boundary conditions imposed at the edge of the solution’s support. Such models have been the subject of extensive subsequent study, and many of their properties are now well understood (see~\cite{Du2010} and the references therein). However, the corresponding traveling wave solution is a weak solution of the PDE only inside its support, and fails to be a weak solution on the entire real line.  

In contrast to the monostable case, bistable traveling waves connecting two stable equilibria admit a unique wave speed, so no additional reaction-induced waves exist. Hence, one may predict that the Allen–Cahn reaction \eqref{Allen-Cahn}, when coupled with degenerate diffusion, produces free boundaries, as confirmed in the work of Y. Hosono~\cite{Ho}. The free boundary examined in the present paper, however, is of a different nature: neither degenerate diffusion nor artificially imposed Stefan-type boundary conditions are involved. Instead, it originates from the discontinuity of the reaction function near \(0\). See also \cite{Giletti2022}, where the so-called terrace solutions are studied.\\

The proofs of Theorems~\ref{thm1}--\ref{thm4} are based on phase plane analysis and a shooting argument. We first consider the equation \eqref{tw} \textit{without} the step function:
\begin{equation}\label{KPP_Trav}
-cu' = u''  + (u-\alpha)(1-u).
\end{equation} 
Using phase plane analysis, we carefully examine the behaviour of the trajectories, which depend on $c$. In particular, we look for the wave speed at which the trajectory of \eqref{KPP_Trav}, starting from the unstable manifold at $(u,u')=(1,0)$, reaches the point $(u,u')= (0,0)$ in finite $\xi=\xi_0$. We then glue this solution to the trivial solution $(u,u')\equiv (0,0)$ of \eqref{tw}, and as a result, we prove the existence of nontrivial traveling waves of \eqref{tw} with free boundaries. Combined with the standard argument, \cite{BS,PW}, the shooting argument allows us to classify all possible non-trivial traveling wave solutions of \eqref{tw} satisfying the far-field conditions, as well as its stationary solutions.

We remark that the work of \cite{CK} studies monotone solutions to \eqref{tw} satisfying \eqref{a1} by regularizing the reaction term and then taking the limit. However, our approach does not require such a limiting process.

The paper is organized as follows. In Section~\ref{Sec_2}, we prove Theorems~\ref{thm1}–\ref{thm4}. In Section~\ref{Sec_3}, we discuss the behavior of initial value problems associated with \eqref{sys}. In Section~\ref{Sec3_1}, we observe some key features of the model \eqref{sys}, including finite-time extinction and the emergence of free boundaries. In Section~\ref{Sec3_2}, we present numerical simulations supporting the stability of traveling wave and stationary solutions of \eqref{sys} with free boundaries, and address mathematical difficulties that arise in stability analysis.

\section{Proof of Main results}\label{Sec_2}
We rewrite the equation \eqref{KPP_Trav} as follows: 
\begin{equation}\label{KPP_Sys}
\left\{
\begin{array}{l l}
u' = w, \\
w' = -cw  - (u-\alpha)(1-u).
\end{array} 
\right.
\end{equation}
Due to symmetry \eqref{Symmetry}, it suffices to consider the case $c\geq 0$.

The system \eqref{KPP_Sys} has two equilibrium points $(u,w)=(\alpha,0)$ and $(u,w)=(1,0)$. 
The eigenvalues of the associated Jacobian matrix at $(u,w)=(\alpha,0)$ are 
\begin{equation}\label{2}
\lambda_\pm := \frac{-c\pm \sqrt{c^2-4(1-\alpha)}}{2}, 
\end{equation}
and those at $(u,w)=(1,0)$ are 
\begin{equation}\label{1}
\Lambda_\pm := \frac{-c\pm \sqrt{c^2+4(1-\alpha)}}{2}.
\end{equation}
Since $1-\alpha>0$, the point $(1,0)$ is a saddle for all $c \geq 0$. The unstable manifolds at the saddle point $(1,0)$ are tangent to the associated eigenvector $(1,\Lambda_+)$, where $\Lambda_+>0$. On the other hand, the point $(\alpha,0)$ is a stable node for $c \geq 2\sqrt{1-\alpha}$,   and a stable focus (or a stable spiral point) for $0<c<2\sqrt{1-\alpha}$. For $c=0$, the point $(\alpha,0)$ is a center.

Let us define 
\begin{equation}\label{LevelCurve}
E(u,w) := \frac{w^2}{2} - \int_{u}^1 (s-\alpha)(1-s)\,ds .
\end{equation}
In the region $u \leq 1$, the level set $E(u,w) = 0$ is a simple closed curve that contains $(u,w)=(1,0)$ on it and encloses  $(u,w)=(\alpha,0)$. See Figure \ref{Figure12}.  We also define the open set 
\begin{equation}\label{Omega}
\Omega := \{(u,w)\in \mathbb{R}^2 : E(u,w)<0, \; u<1\}.
\end{equation}
We observe that $E((3\alpha- 1)/2,0)=0$, and hence 
\begin{equation}\label{OmegaAlp}
\left\{
\begin{array}{l l}
(0,0)\in \Omega & \text{ for } 0<\alpha<\frac{1}{3}; \\ 
(0,0) \in \overline{\Omega} \text{ and } \inf_{(u,w)\in\Omega}u = 0 & \text{ for } \alpha=\frac{1}{3}; \\
(0,0) \notin \overline{\Omega} \text{ and } \inf_{(u,w)\in\Omega}u > 0 & \text{ for } \frac{1}{3}<\alpha<1.
\end{array} 
\right.
\end{equation}

\subsection{Stationary solutions}
We investigate the solutions to \eqref{KPP_Sys} with $c=0$. 
\begin{proposition}\label{Pro_stead}
Consider \eqref{KPP_Sys} with $c=0$. Then, the following hold:
\begin{enumerate}
\item There is a homoclinic orbit approaching $(u,w)=(1,0)$ as $\xi\to \pm\infty$. The trajectory lies on the simple closed loop  $E(u,w)=0$;
\item For each point $(u,w)=(u_0,0)$ with $u_0 \in (3\alpha-1/2, \alpha)$, there is a periodic orbit passing through $(u_0,0)$. The trajectory lies on the simple closed loop $E(u,w)=-E_0<0$, where $E_0:=-E(u_0,0)$.
\end{enumerate}
\end{proposition}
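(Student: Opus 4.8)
The plan is to exploit that for $c=0$ the system \eqref{KPP_Sys} is conservative, so the quantity $E$ from \eqref{LevelCurve} is a first integral: a one-line computation gives $\frac{d}{d\xi}E(u,w)=E_u\,u'+E_w\,w'=(u-\alpha)(1-u)\cdot w+w\cdot\bigl(-(u-\alpha)(1-u)\bigr)=0$, so every trajectory of \eqref{KPP_Sys} with $c=0$ stays on a level set $\{E=\mathrm{const}\}$. It is convenient to view \eqref{KPP_Sys} with $c=0$ as the Newtonian system $u''=-V'(u)$ with potential $V(u):=\int_\alpha^u(s-\alpha)(1-s)\,ds$, so that $E(u,w)=\tfrac{w^2}{2}+V(u)-V(1)$. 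Since $V'(u)=(u-\alpha)(1-u)$, the potential $V$ is strictly decreasing on $(-\infty,\alpha)$ and strictly increasing on $(\alpha,1)$, with $V''(\alpha)=1-\alpha>0$ and $V''(1)=\alpha-1<0$; thus $V$ has a nondegenerate minimum at $\alpha$ and a nondegenerate maximum at $1$. This is exactly the single-well potential picture that produces a center at $(\alpha,0)$ surrounded by periodic orbits, all enclosed by a homoclinic loop through the saddle $(1,0)$.

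For part (1), the level set $\{E=0\}$ is $\{w^2=2(V(1)-V(u))\}$, which in the region $u\le1$ requires $V(u)\le V(1)$. The equation $V(u)=V(1)$ has $u=1$ as a double root (because $V'(1)=0$), and factoring the cubic $V(u)-V(1)$ leaves the single remaining root $u=\tfrac{3\alpha-1}{2}<\alpha$, in agreement with $E(\tfrac{3\alpha-1}{2},0)=0$ recorded above. Hence, in $u\le1$ the set $\{E=0\}$ is precisely the simple closed curve $w=\pm\sqrt{2(V(1)-V(u))}$, $u\in[\tfrac{3\alpha-1}{2},1]$, which passes through $(1,0)$ and encloses $(\alpha,0)$, as already noted. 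It remains to identify this loop (minus $(1,0)$) as a single homoclinic trajectory: the branch of the unstable manifold of the saddle $(1,0)$ entering $\{u<1\}$ lies on $\{E=0\}$, hence on this loop; since $(1,0)$ is hyperbolic it is not reached in finite $\xi$, so this trajectory must traverse the entire loop---turning smoothly at $u=\tfrac{3\alpha-1}{2}$, where $w=0$ but $w'=-(u-\alpha)(1-u)>0$---and return to $(1,0)$ as $\xi\to+\infty$ along the stable manifold. The reversibility $(c,\xi)\mapsto(-c,-\xi)$ in \eqref{Symmetry}, which for $c=0$ reduces to $w\mapsto-w$, confirms the orbit is symmetric about $\{w=0\}$.

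For part (2), fix $u_0\in(\tfrac{3\alpha-1}{2},\alpha)$ and put $E_0:=-E(u_0,0)=V(1)-V(u_0)$. Since $V$ is strictly decreasing on $(\tfrac{3\alpha-1}{2},\alpha)$ we have $V(\alpha)<V(u_0)<V(1)$, so $0<E_0<V(1)-V(\alpha)$, and the level set $\{E=-E_0\}=\{w^2=2(V(u_0)-V(u))\}$ requires $V(u)\le V(u_0)$. Monotonicity of $V$ shows that $V(u)=V(u_0)$ has exactly two roots in $(-\infty,1]$, namely $u_0$ and a unique $u_0'\in(\alpha,1)$, both simple. Thus, in $u\le1$, $\{E=-E_0\}$ is the simple closed curve $w=\pm\sqrt{2(V(u_0)-V(u))}$, $u\in[u_0,u_0']$, which passes through $(u_0,0)$ and lies in the set $\Omega$ of \eqref{Omega}. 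This loop contains no equilibrium---$(\alpha,0)$ is strictly interior since $V(\alpha)<V(u_0)$, and $(1,0)$ is exterior since $u_0'<1$---so by compactness and uniqueness of solutions of \eqref{KPP_Sys} it is a single periodic orbit, again smooth at its endpoints since $V'(u_0),V'(u_0')\ne0$.

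The only step demanding some care is the rigorous identification in part (1) of $\{E=0\}\cap\{u\le1\}$ as one homoclinic loop: one must pin down the left turning point through the cubic factorization of $V(u)-V(1)$ and then argue, from hyperbolicity of $(1,0)$ together with conservation of $E$ (rather than via Poincar\'e--Bendixson), that the unstable-manifold branch indeed closes up. Everything else is the textbook phase-plane analysis of a conservative one-degree-of-freedom system in a single-well potential, so I expect no genuine obstacle.
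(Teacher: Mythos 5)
Your proposal is correct and follows essentially the same route as the paper: both rest on the fact that $E$ is conserved when $c=0$ and identify the homoclinic and periodic orbits with the level sets $\{E=0\}$ and $\{E=-E_0\}$ in $u\le 1$. The only minor differences are that you justify the closing-up of the unstable-manifold branch by a direct traversal argument (explicit cubic factorization locating the turning point $u=\tfrac{3\alpha-1}{2}$, plus hyperbolicity of $(1,0)$), whereas the paper argues by contradiction via local extension past a putative non-equilibrium limit point, and the paper additionally records a period formula for the periodic orbits that the statement does not require.
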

\begin{proof}
By the stable manifold theorem, \cite{P}, there is a solution to \eqref{KPP_Sys} with $c=0$ approaching the saddle point $(1,0)$ as $\xi \to -\infty$ in the region $\{(u,w) : u <1, w<0\}$. Furthermore, the solution satisfies $E(u(\xi),w(\xi))=0$ as long as the solution exists. Hence, $(u,w)(\xi)$ exists for all $\xi \in\mathbb{R}$, and it is bounded.

We claim that $(u,w)(\xi) \to (1,0)$ as $\xi \to +\infty$. First, we notice that the solution cannot pass through the point $(1,0)$ at some finite $\xi_0$ since it is an equilibrium point. Hence, we suppose to the contrary that there is a point $(u_0,w_0) \neq (1,0)$ on the loop $E(u,w)=0$ such that $(u,w)(\xi) \to (u_0,w_0) $ as $\xi \to +\infty$. By the local existence of the ODE \eqref{KPP_Sys}, there is small number $\tilde{\xi}>0$ such that the solution $(\tilde{u},\tilde{w})$ to \eqref{KPP_Sys} with the initial condition $(\tilde{u},\tilde{w})(0)=(u_0,w_0)$ exists on the interval $\xi \in (-\tilde{\xi},\tilde{\xi})$. Moreover, $E(\tilde{u}(\xi),\tilde{w}(\xi))=0$ holds as long as the solution exists. By the uniqueness, $(u,w)$ is extended  beyond the point $(u_0,w_0)$ along the curve $E(u,w)=0$. This is a contradiction, and we finish the proof of the first statement. 

We prove the second statement. For any $u_0 \in (3\alpha-1/2,\alpha)$, there is a positive number $E_0:=-E(u_0,0)$ such that $E(u,w)=-E_0$ is a simple closed loop that encloses $(\alpha,0)$ and lies on the region $E(u,w)<0$. By a similar argument as above, one can show the existence of a periodic orbit passing through $(u_0,0)$, since there are no equilibrium points on the curve $E(u,w)=-E_0$. We omit the details. 

On the other hand, from \eqref{KPP_Sys}, we see that 
\begin{equation}\label{period}
\xi = \int_0^\xi \frac{1}{w}\frac{du}{d\tilde{\xi}}\,d\tilde{\xi} = \int_{u(0)}^{u(\xi)}  \frac{du}{w} = \int_{u(0)}^{u(\xi)} \frac{du}{\sqrt{2}\sqrt{\int_u^1 (s-\alpha)(1-s)\,ds - E_0}}.
\end{equation}
For any $u_0 \in (3\alpha-1/2,\alpha)$, there exists a unique $u_1\in(\alpha,1)$ such that $E(u_0,0)=E(u_1,0)$ by the definition of $E$. Let 
\[
T :=  \int_{u_0}^{u_1} \frac{du}{\sqrt{2}\sqrt{\int_u^1 (s-\alpha)(1-s)\,ds - E_0}}.
\]
From \eqref{period} and symmetry, we see that $2T$ is the period of the periodic orbit passing through $(u_0,0)$. We finish the proof.
\end{proof}

\subsection{Traveling wave solutions}

We first start with the nonexistence of periodic or homoclinic orbits to \eqref{KPP_Sys} with $c\neq 0$. 
\begin{lemma}\label{Lem2}
Consider the system \eqref{KPP_Sys} with $c\neq 0$. Then, it does not admit nontrivial periodic solutions. Furthermore, it does not admits nontrivial bounded solutions satisfying $ \textstyle{\lim_{\xi \to -\infty }u =  \lim_{\xi \to +\infty }u } $ and $ \textstyle{\lim_{\xi \to \pm \infty} w = 0}$.  
\end{lemma}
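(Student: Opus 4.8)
The plan is to use a Lyapunov-type (energy dissipation) argument based on the function $E$ defined in \eqref{LevelCurve}. Along any solution of \eqref{KPP_Sys}, a direct computation gives
\[
\frac{d}{d\xi} E(u(\xi),w(\xi)) = w\,w' + (u-\alpha)(1-u)\,u' = w\bigl(-cw-(u-\alpha)(1-u)\bigr) + (u-\alpha)(1-u)\,w = -c\,w^2.
\]
Thus for $c\neq 0$ the quantity $E$ is strictly monotone in $\xi$ wherever $w\neq 0$: nonincreasing if $c>0$, nondecreasing if $c<0$. (Recall by symmetry \eqref{Symmetry} it suffices to treat $c>0$.)

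First I would rule out nontrivial periodic solutions: if $(u,w)$ were periodic with period $P$, then $E(u(\xi),w(\xi))$ would be periodic, so $\int_0^P \frac{d}{d\xi}E\,d\xi = -c\int_0^P w^2\,d\xi = 0$, forcing $w\equiv 0$ on $[0,P]$, hence $u'\equiv 0$, so $u$ is constant and the orbit is an equilibrium — not a nontrivial periodic solution. Next, for the homoclinic-type statement: suppose $(u,w)$ is a bounded solution with $\lim_{\xi\to -\infty} u = \lim_{\xi\to +\infty} u =: u_\infty$ and $\lim_{\xi\to\pm\infty} w = 0$. Then $u_\infty$ must be one of the equilibria $\alpha$ or $1$ (these are the only points where the vector field with $w=0$ vanishes; more carefully, $w\to 0$ at both ends together with the $w'$ equation and boundedness forces $(u-\alpha)(1-u)\to 0$, see the limit argument below). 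Evaluating $E$ at both ends gives $\lim_{\xi\to-\infty} E(u(\xi),w(\xi)) = E(u_\infty,0) = \lim_{\xi\to+\infty} E(u(\xi),w(\xi))$. But $E$ is strictly decreasing unless $w\equiv 0$, so
\[
0 = E(u_\infty,0) - E(u_\infty,0) = \int_{-\infty}^{+\infty} \frac{d}{d\xi}E\,d\xi = -c\int_{-\infty}^{+\infty} w^2\,d\xi,
\]
which forces $w\equiv 0$, hence $u\equiv u_\infty$, i.e. the solution is trivial (an equilibrium). This contradicts "nontrivial" and finishes the argument.

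There are two technical points I would need to be careful about, and the second is the main obstacle. The minor point: one must justify that the improper integral $\int_{-\infty}^{\infty}\frac{d}{d\xi}E\,d\xi$ equals $E(+\infty) - E(-\infty)$, which is immediate once we know $E(u(\xi),w(\xi))$ has limits at $\pm\infty$; those limits exist because $E$ is monotone (for $c>0$) and bounded (the solution is bounded, $E$ is continuous), and they equal $E(u_\infty,0)$ by continuity of $E$ and the assumed limits of $(u,w)$. The more delicate point is justifying that the endpoint value of $u$ is a genuine limit and that $(u-\alpha)(1-u)\to 0$ — but in fact the statement *assumes* $\lim_{\xi\to\pm\infty}u$ exists and $\lim w = 0$, so I only need that these limits are consistent with the ODE: differentiating, $w' = -cw - (u-\alpha)(1-u)$; since $w\to 0$ and (by boundedness plus standard ODE regularity) $w'$ along a subsequence tends to $0$, we get $(u_\infty-\alpha)(1-u_\infty)=0$. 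This is not even strictly needed — the energy identity alone gives $w\equiv 0$ regardless of which value $u_\infty$ takes, and then $u\equiv u_\infty$ constant forces $u_\infty\in\{\alpha,1\}$ anyway. So the whole proof reduces to the clean identity $\frac{d}{d\xi}E = -cw^2$ together with the observation that a quantity which is strictly monotone along the orbit cannot return to, or asymptote to, its own value at the other end unless $w$ vanishes identically. I expect the write-up to be short; the only place to tread carefully is the interchange of limit and integral, handled by monotonicity.
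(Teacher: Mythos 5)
Your argument is correct and is essentially the paper's own proof: multiplying the second equation of \eqref{KPP_Sys} by $w=u'$ and integrating is exactly the identity $\tfrac{d}{d\xi}E(u,w)=-cw^2$ that you use, and both the periodic case and the equal-limits case are disposed of by integrating this identity over a period or over $\mathbb{R}$ to force $w\equiv 0$. Your write-up is merely more explicit about the limit/integral interchange and about why $w\equiv 0$ yields triviality, which the paper leaves to the reader.
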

\begin{proof}
Suppose that \eqref{KPP_Sys} admits a periodic solution with period $T$. Multiplying the second equation of \eqref{KPP_Sys} by $w=u'$, and then integrating the resulting equation from $0$ to $T$, we see that 
\[
-c \int_0^T w^2\,d\xi = 0,
\]
which is a contradiction since $c \neq 0$. 

The second statement can be proved in a similar fashion. We omit the details, and we finish the proof of Lemma \ref{Lem2}.
\end{proof}
 
\begin{proposition}\label{Proposi_1}
Consider the system \eqref{KPP_Sys} with $c>0$. Then, the following hold:\\
(1) Any local solution $(u,w)(\xi)$ to \eqref{KPP_Sys} passing through a point $(u_0,w_0) \in \Omega$  at $\xi=0$ can be extended over all $\xi \in [0,+\infty)$. In particular, $(u,w)\in\Omega$ for all $\xi\in[0,+\infty)$, and $(u,w)$ approaches $(\alpha,0)$ as $\xi \to +\infty$.  \\
(2) There is a unique solution to \eqref{KPP_Sys} satisfying 
\begin{equation}\label{Eq1_Thm1}
\lim_{\xi \to -\infty}(u,w)(\xi) = (1,0), \quad \lim_{\xi \to +\infty}(u,w)(\xi) = (\alpha,0).
\end{equation} 
Moreover, $(u,w)\in \Omega$ for all $\xi \in \mathbb{R}$. In particular, when $c \geq c^\ast$, $u(\xi)$ is monotonically decreasing on $\mathbb{R}$. When $0<c<c^\ast$, $u(\xi)$ decays to $\alpha$ as $\xi \to +\infty$ in an oscillatory fashion, and there is a (unique) $\xi_0$ such that $\tfrac{3\alpha-1}{2} < u(\xi_0) < \alpha$,  $w(\xi_0)=0$, and $w<0$ for all $\xi\in(-\infty,\xi_0)$.  
\end{proposition}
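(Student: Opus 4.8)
The backbone of the argument is the functional $E$ from \eqref{LevelCurve}, which is non-increasing along the flow of \eqref{KPP_Sys}: a direct computation gives $\tfrac{d}{d\xi}E(u(\xi),w(\xi)) = w\,w' + (u-\alpha)(1-u)\,u' = -c\,w^{2} \le 0$. Since $E<0$ on $\Omega$ while $E(1,w)=w^{2}/2\ge 0$, an orbit that sits in $\Omega$ at $\xi=0$ can never attain the level $E=0$, and it can never reach $u=1$ (it would have to cross $u=1$, where $E\ge 0$, contradicting $E<0$); by continuity it therefore stays in $\Omega=\{E<0\}\cap\{u<1\}$ for all $\xi\ge 0$ in its maximal interval. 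As $\Omega$ is bounded — it is the interior of the simple closed loop $E=0$ — the orbit remains in the compact set $\overline\Omega$, hence does not blow up and extends to $[0,+\infty)$. This proves the first two assertions of (1).

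For the limit in (1) I would invoke LaSalle's invariance principle: $E$ is non-increasing and bounded below on $\overline\Omega$, so the $\omega$-limit set of the orbit is a nonempty, connected, invariant subset of $\{w=0\}\cap\overline\Omega$; the only invariant points there are the equilibria $(\alpha,0)$ and $(1,0)$, so the $\omega$-limit set is one of them. To rule out $(1,0)$, note that a forward orbit converging to the saddle $(1,0)$ must lie on its one-dimensional stable manifold, which near $(1,0)$ is tangent to $(1,\Lambda_-)$; expanding $E$ along it gives $E=\tfrac12\bigl(\Lambda_-^2-(1-\alpha)\bigr)(u-1)^2+o\bigl((u-1)^2\bigr)$, and since $\Lambda_+\Lambda_-=-(1-\alpha)$ and $\Lambda_+^2=(1-\alpha)-c\Lambda_+<1-\alpha$ we get $\Lambda_-^2=(1-\alpha)^2/\Lambda_+^2>1-\alpha$, so the stable manifold lies in $\{E>0\}$ — incompatible with $E<0$ along our orbit. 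Hence $(u,w)(\xi)\to(\alpha,0)$.

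For (2), existence of the connection \eqref{Eq1_Thm1} follows by taking the branch of the unstable manifold $W^{u}(1,0)$ (tangent to $(1,\Lambda_+)$, $\Lambda_+>0$) that leaves into $\{u<1\}$: the same expansion, now with $\Lambda_+^2-(1-\alpha)<0$, shows this branch enters $\Omega$, so by (1) it converges to $(\alpha,0)$ as $\xi\to+\infty$, while it converges to $(1,0)$ as $\xi\to-\infty$ by construction; it lies in $\Omega$ for all $\xi$ because $E<0$ near $-\infty$ on this branch, $E$ decreases, and $u=1$ is never reached. For uniqueness up to translation, any solution with $\lim_{\xi\to-\infty}(u,w)=(1,0)$ lies on $W^{u}(1,0)$; the other branch ($u>1$) is trapped in the forward-invariant set $\{u>1,\ w>0\}$ — on $\{w=0,\ u>1\}$ one has $w'=(u-\alpha)(u-1)>0$ and on $\{u=1,\ w>0\}$ one has $u'=w>0$, both pointing inward — so it cannot converge to $\alpha$. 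Thus the $\{u<1\}$-branch is the only connection.

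It remains to describe the orbit's shape. If $c\ge c^{\ast}$ then $\lambda_\pm<0$ and I would use the forward-invariant triangle $R=\{\alpha<u<1,\ \lambda_+(u-\alpha)<w<0\}$: on $\{w=0\}$ we have $w'=-(u-\alpha)(1-u)<0$; on $\{u=1\}$ we have $u'=w<0$; and on $\{w=\lambda_+(u-\alpha)\}$ the identity $\lambda_+^2+c\lambda_++(1-\alpha)=0$ yields $\tfrac{d}{d\xi}\bigl(w-\lambda_+(u-\alpha)\bigr)=(u-\alpha)^2\ge0$ — all pointing inward. Since the connecting orbit is tangent to $(1,\Lambda_+)$ at $(1,0)$ and $\Lambda_+(u-1)>\lambda_+(u-\alpha)$ for $u$ slightly below $1$, it enters $R$ and stays there, so $w<0$ and $u'<0$ throughout $\mathbb{R}$, i.e. $u$ is strictly decreasing. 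If $0<c<c^{\ast}$ then $(\alpha,0)$ is a stable focus and the converging orbit must wind infinitely often around it (a polar-angle computation shows $\tfrac{d}{d\xi}\arg(u-\alpha,w)$ is bounded away from $0$ with a fixed sign near $(\alpha,0)$ when $c<c^\ast$), giving the oscillation; in particular $w$ vanishes at a first point $\xi_0$, where $w'(\xi_0)=-(u(\xi_0)-\alpha)(1-u(\xi_0))$ must be $\ge 0$ while $u(\xi_0)=\alpha$ is impossible (equilibrium reached at finite $\xi$), forcing $u(\xi_0)<\alpha$, and $E(u(\xi_0),0)<0$ forces $u(\xi_0)>\tfrac{3\alpha-1}{2}$; uniqueness of $\xi_0$ is built into its definition as the first zero of $w$. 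The step I expect to be the main obstacle is pinning down the invariant triangle $R$ for the $c\ge c^\ast$ monotonicity — in particular the boundary-flux computation and the verification that the connecting orbit enters $R$ near the saddle; the remaining steps reduce to the monotonicity of $E$ and routine facts about hyperbolic and non-hyperbolic planar equilibria.
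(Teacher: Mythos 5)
Your proof is correct and follows essentially the same route as the paper: the functional $E$ with $E'=-cw^{2}\le 0$ to trap orbits in the bounded region $\Omega$ and force convergence to $(\alpha,0)$, the unstable manifold of the saddle $(1,0)$ for existence of the connection, the spiral structure at $(\alpha,0)$ for $0<c<c^{\ast}$, and a line barrier $w=m(u-\alpha)$ for monotonicity when $c\ge c^{\ast}$ (you take $m=\lambda_{+}$, the paper any $m\in[\lambda_{-},\lambda_{+}]$). The only cosmetic difference is that you pin down the $\omega$-limit set via LaSalle's invariance principle plus a stable-manifold expansion of $E$ at the saddle, whereas the paper uses Lemma~\ref{Lem2} together with the Poincar\'e--Bendixson theorem and the asymptotic stability of $(\alpha,0)$; both are sound.
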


\begin{proof}
We first prove (1). We observe that as long as $(u,w)$ satisfies \eqref{KPP_Sys}, we have
\begin{equation}\label{Eq2_Thm1}
E(u(\xi),w(\xi))' = - cw^2 \leq  0.
\end{equation}
Hence, the solution $(u,w)$ starting at a point in $\Omega$ lies in the region $\Omega$ as long as it exists (forward in $\xi$). Moreover, the solution extends to all $\xi \in [0,+\infty)$ since $\Omega$ is bounded.  

The closure of $\Omega$ contains only two equilibrium points $(\alpha,0)$ and $(1,0)$. By Lemma \ref{Lem2} and the Poincar\'e-Bendixson theorem, \cite{P}, the $\omega$-limit set of the trajectory of \eqref{KPP_Sys} through a point in $\Omega$ is either $\{(\alpha,0)\}$ or a connected set composed of the two equilibrium points and a heteroclinic orbit connecting $(1,0)$ and $(\alpha,0)$.  
The last case is excluded since $(\alpha,0)$ is an asymptotically stable equilibrium when $c>0$; the trajectory cannot escape a neighbourhood of $(\alpha,0)$.  Hence, the $\omega$-limit set is the point $(\alpha,0)$, and the trajectory approaches $(\alpha,0)$ as $\xi \to +\infty$. The limit exists since it is an asymptotically stable equilibrium.

Now we prove (2). By \eqref{1} and the stable manifold theorem, there is a solution to \eqref{KPP_Sys} approaching the saddle point $(1,0)$ as $\xi \to -\infty$ in the region $\{(u,w): u<1, w<0\}$.  Together with statement (1), \eqref{1} implies that the trajectory is confined in the bounded region $\Omega$, and $(u,w)$ approaches $(\alpha,0)$ as $\xi \to +\infty$. In particular, it cannot leave the region $w<0$ crossing the line segment $\{(u,w): \alpha < u < 1, w=0\}$ as $w'(\xi)<0$ on it. 

For $c\in(0,c^\ast)$, we recall (see \eqref{2}) that $(\alpha,0)$ is a stable spiral point. Hence, there exists $\xi_0$ such that $w(\xi_0)=0$ and $w=u'<0$ for all $\xi \in (-\infty,\xi_0)$, and $u$ decays to $\alpha$ in an ocillatory fashion as $\xi \to +\infty$.

On the other hand, for $c \geq c^\ast$, we consider the line $w=m(u-\alpha)$ with slope $m<0$. We choose $m<0$ (for instance $m=-c/2$) such that, at any point of the line segment $\{(u,w)\in\Omega: \alpha < u, w=m(u-\alpha)\}$, it holds that 
\[
\begin{split}
\frac{dw}{du} 
& = -c + \frac{(u-\alpha)(u-1)}{w} \\
& =  -c + \frac{(u-1)}{m}  \\
& < -c - \frac{1-\alpha}{m} \leq m.
\end{split}
\]
This shows that the trajectory cannot cross the line $w=m(u-\alpha)$. See Figure~\ref{Figure13}. Hence, $w=u'<0$ hold true for all $\xi \in \mathbb{R}$. This completes the proof.
\end{proof}  

The following  can be shown in a similar fashion as Proposition \ref{Proposi_1}.
\begin{proposition}\label{Cor_Mono} 
For each $c \in [2, \infty)$, the solution $(u,w)$ to \eqref{KPP_Sys} with the initial condition $(u_0,w_0)=(0,0)$  satisfies $w>0$ and $0<u<\alpha$ for all $\xi>0$. Furthermore,  $(u,w) \to (\alpha,0)$ as $\xi \to+\infty$. 
\end{proposition}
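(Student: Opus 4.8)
The plan is to mimic the structure of the proof of Proposition~\ref{Proposi_1}, exploiting the same Lyapunov-type functional $E$, but now tracking the trajectory that starts from the degenerate point $(0,0)$ rather than from the unstable manifold at $(1,0)$. First I would check the local behaviour near $\xi=0$: since $(0,0)$ is not an equilibrium of \eqref{KPP_Sys} when $\alpha>0$ (indeed $w'(0) = -(0-\alpha)(1-0) = \alpha>0$), the solution leaves $(0,0)$ with $w$ strictly increasing, so $w(\xi)>0$ and hence $u(\xi)>0$ for all small $\xi>0$. This gives the starting point of the trajectory just inside the first quadrant near the origin.

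Next I would show the trajectory stays trapped in the bounded region $\{0<u<\alpha,\ w>0\}$ until it can only exit through the segment $\{u=\alpha,\ w>0\}$. The key tool is again \eqref{Eq2_Thm1}: $E(u(\xi),w(\xi))$ is nonincreasing along the flow since $c\ge 2>0$. Since $E(0,0) = -\int_0^1 (s-\alpha)(1-s)\,ds$, and using \eqref{OmegaAlp} together with the hypothesis $\alpha\ge 1/3$ (wait—here $\alpha$ is general in $(0,1)$, so I must instead argue directly): one checks $E(0,0)\le 0$ precisely when $(3\alpha-1)/2 \le 0$, i.e. $\alpha\le 1/3$; for $\alpha>1/3$ we have $E(0,0)>0$, so $(0,0)\notin\overline\Omega$. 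In either case I would confine the trajectory by the following barrier arguments: (a) on $\{u=0,\ w>0\}$ one has $u'=w>0$, so $u$ cannot return to $0$; (b) on $\{0<u<\alpha,\ w=0\}$ one has $w' = -(u-\alpha)(1-u)>0$, so the trajectory cannot cross downward through the $u$-axis between $0$ and $\alpha$; (c) thus while $0<u<\alpha$ the trajectory remains in $\{w>0\}$, hence $u$ is strictly increasing; (d) $w$ cannot blow up on this region because $w'= -cw-(u-\alpha)(1-u) \le -cw + \alpha$, so $w$ is bounded by $\max(w(\epsilon),\alpha/c)$ while $u\in(0,\alpha)$. Therefore $u$ increases monotonically and reaches $u=\alpha$ at some finite or infinite $\xi$; since $u$ is increasing and bounded, either $u\to\alpha$ as $\xi\to\infty$ (with $w\to 0$), or $u$ reaches $\alpha$ in finite time with $w>0$ there. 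To rule out the latter—or rather, to show that once past $u=\alpha$ we still converge—I would invoke the choice $c\ge 2 = 2\sqrt{1}\ge 2\sqrt{1-\alpha} = c^\ast$, so $(\alpha,0)$ is a stable node, and the line-barrier argument from the end of Proposition~\ref{Proposi_1} (using $w=m(u-\alpha)$ with, say, $m=-c/2$) confines the trajectory for $u>\alpha$ as well; combined with Lemma~\ref{Lem2} and Poincaré–Bendixson exactly as in Proposition~\ref{Proposi_1}(1), the $\omega$-limit set is $\{(\alpha,0)\}$.

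Putting this together: for $u\in(0,\alpha)$ we have $w>0$ and $0<u<\alpha$ directly; at $u=\alpha$ (if attained) $w\ge 0$, and I would note that $w=0$ there would force, by uniqueness, the constant solution, contradicting $w>0$ just before, so in fact if the trajectory reaches $u=\alpha$ it does so with $w>0$ and then enters the node's basin; and if it never reaches $u=\alpha$ then $0<u<\alpha$, $w>0$ for all $\xi>0$ automatically and $(u,w)\to(\alpha,0)$. Either way the stated conclusions $w>0$, $0<u<\alpha$ for all $\xi>0$ and $(u,w)\to(\alpha,0)$ hold—except I should double-check whether the proposition really claims $0<u<\alpha$ for \emph{all} $\xi>0$ or only up to the approach; reading it again, it does, which means the trajectory in fact never crosses $u=\alpha$, i.e. it approaches $(\alpha,0)$ monotonically from the left. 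That monotone-from-below approach is consistent with $(\alpha,0)$ being a node when $c\ge c^\ast$, and I would justify it by the same slope-line barrier: choosing $m<0$ appropriately shows the unstable-from-origin branch must approach along the slow eigendirection without overshooting.

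The main obstacle I anticipate is the last point—cleanly proving that the trajectory approaches $(\alpha,0)$ \emph{without} crossing $u=\alpha$, so that $0<u<\alpha$ genuinely holds for all $\xi>0$. Monotone convergence and boundedness give $u\to L\le\alpha$ with $w\to 0$, and $E$-monotonicity plus the structure of $E$ should force $L=\alpha$; but ruling out a finite-$\xi$ crossing requires either a careful eigendirection/nullcline analysis near the node or a tailored invariant region (a wedge between the $u$-axis and a line of suitable negative slope through $(\alpha,0)$) that the trajectory cannot leave. I expect this to be the delicate step; the rest is a routine transcription of the phase-plane confinement and Poincaré–Bendixson argument already carried out in Proposition~\ref{Proposi_1}.
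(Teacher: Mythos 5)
Your outline matches the paper's strategy (enter the first quadrant, confine the trajectory, then apply Poincar\'e--Bendixson), and your barriers (a)--(b) along the $w$-axis and the segment $\{0<u<\alpha,\ w=0\}$ are exactly the two easy walls of the paper's invariant region. But the step you defer as ``the delicate step'' --- showing the trajectory never crosses $u=\alpha$, so that $0<u<\alpha$ holds for \emph{all} $\xi>0$ --- is precisely the content of the proof, and you do not carry it out. The paper closes the region with the third wall you only gesture at: the line $w=m(u-\alpha)$ with $m<0$, over $0<u<\alpha$ (where $w=m(u-\alpha)>0$). On that line,
\[
\frac{dw}{du}=-c-\frac{(u-\alpha)(1-u)}{w}=-c+\frac{u-1}{m}<-c-\frac{1}{m},
\]
using $u>0$ so that $0<1-u<1$ and $m<0$. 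One then needs $-c-1/m\le m$, i.e.\ $m^2+cm+1\le 0$, which admits a real solution
\[
\frac{-c-\sqrt{c^2-4}}{2}\le m\le \frac{-c+\sqrt{c^2-4}}{2}
\]
if and only if $c\ge 2$. This is where the hypothesis $c\in[2,\infty)$ actually enters; your reading of it as merely guaranteeing $c\ge 2\sqrt{1}\ge c^\ast$ (so that $(\alpha,0)$ is a node) misses the point. The analogous computation at the end of Proposition~\ref{Proposi_1} takes place on $\alpha<u<1$, where the bound $1-u<1-\alpha$ yields $m^2+cm+(1-\alpha)\le 0$ and hence only requires $c\ge c^\ast=2\sqrt{1-\alpha}$; here the relevant interval is $0<u<\alpha$, the best available bound is $1-u<1$, and the threshold becomes $2$. (Your suggested $m=-c/2$ does work, but only because $c\ge2$, not because $c\ge c^\ast$.)

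With the triangle $\{0<u<\alpha,\ 0<w<m(u-\alpha)\}$ established as positively invariant, the trajectory is trapped in a compact set whose closure contains the single equilibrium $(\alpha,0)$, and Lemma~\ref{Lem2} together with Poincar\'e--Bendixson gives $(u,w)\to(\alpha,0)$; no separate boundedness estimate for $w$ (your step (d)) and no eigendirection analysis near the node are needed. So the gap is not a technicality: the one inequality you leave unverified is the only place the quantitative hypothesis $c\ge 2$ is used, and without it the claim $u<\alpha$ for all $\xi>0$ remains unproved.
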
 
\begin{proof}
By investigating the direction of the vector field associated with \eqref{KPP_Sys}, we see that the trajectory starting from $(u_0,w_0)=(0,0)$ enters the first quadrant of $\mathbb{R}^2$, and it cannot leave the first quadrant crossing either the line segment $\{(u,0): 0<u<\alpha\}$ or the $w$-axis. 

We consider the line $w=m(u-\alpha)$ with $m<0$, and we see that on the line with $0  < u < \alpha$,  
\[
\begin{split}
\frac{dw}{du} 
& =  -c + \frac{(u-1)}{m}  \\
& < -c -\frac{1}{m}.
\end{split}
\]
Now we choose $m<0$ so that $-c -1/m \leq m$, that is, 
\[
\frac{-c - \sqrt{c^2-4}}{2} \leq m \leq \frac{-c + \sqrt{c^2-4}}{2}.
\]
 Hence, the trajectory (with $u<\alpha$) must be trapped in the triangular region. By applying the Poincar\'e-Bendixson theorem, we finish the proof.
\end{proof}

\begin{figure}[ht]
    \centering
    \includegraphics[width=0.7\linewidth]{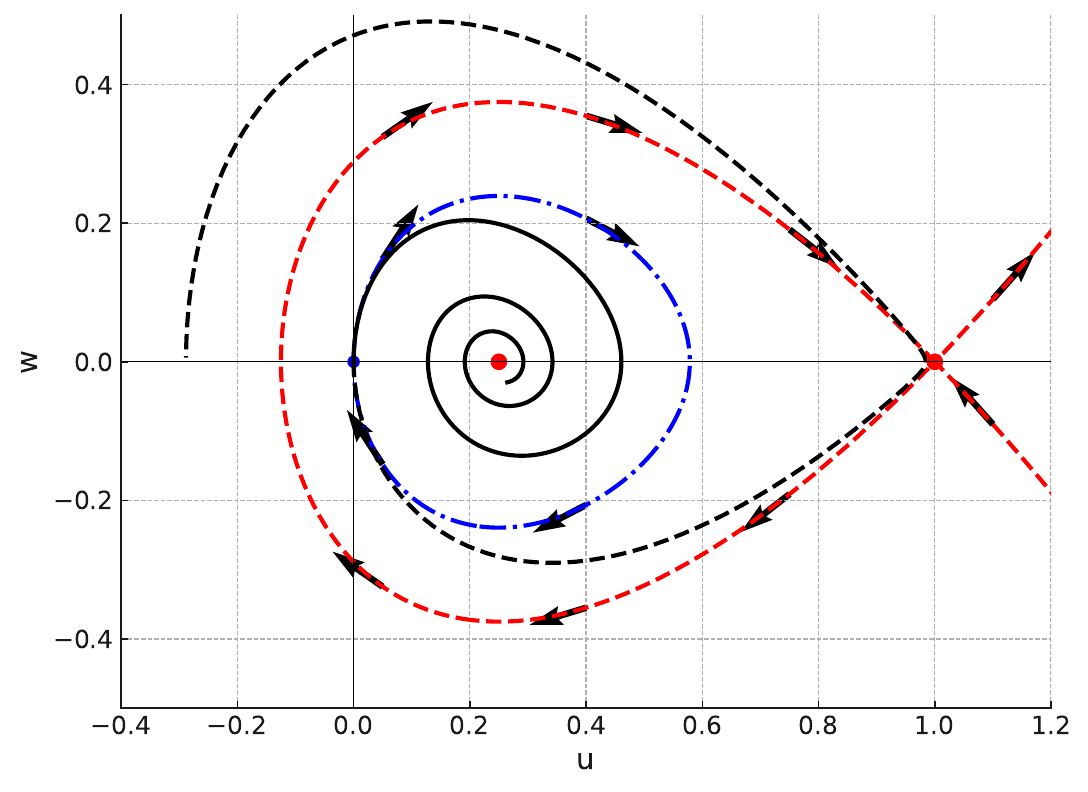}
    \caption{Phase portrait of \eqref{KPP_Sys} when $\alpha=1/4$. For $c=0$, there is a homoclinic orbit (dashed-red) connecting an unstable manifold and a stable manifold. Also, there is a periodic orbit (dashed-dotted) passing through $(0,0)$. For $c=c_\ast>0$, a trajectory starting from the unstable manifold reaches $(0,0)$ in finite time (dashed-black with $w<0$), and a trajectory passing through the origin approaches $(\alpha,0)$ in an oscillatory fashion (solid). As $c \geq 0$ increases, the solution curves $w(u)$ (dashed-black curves) shift upward. }
    \label{Figure12}
\end{figure}

\begin{figure}[ht]
    \centering
    \includegraphics[width=0.7\linewidth]{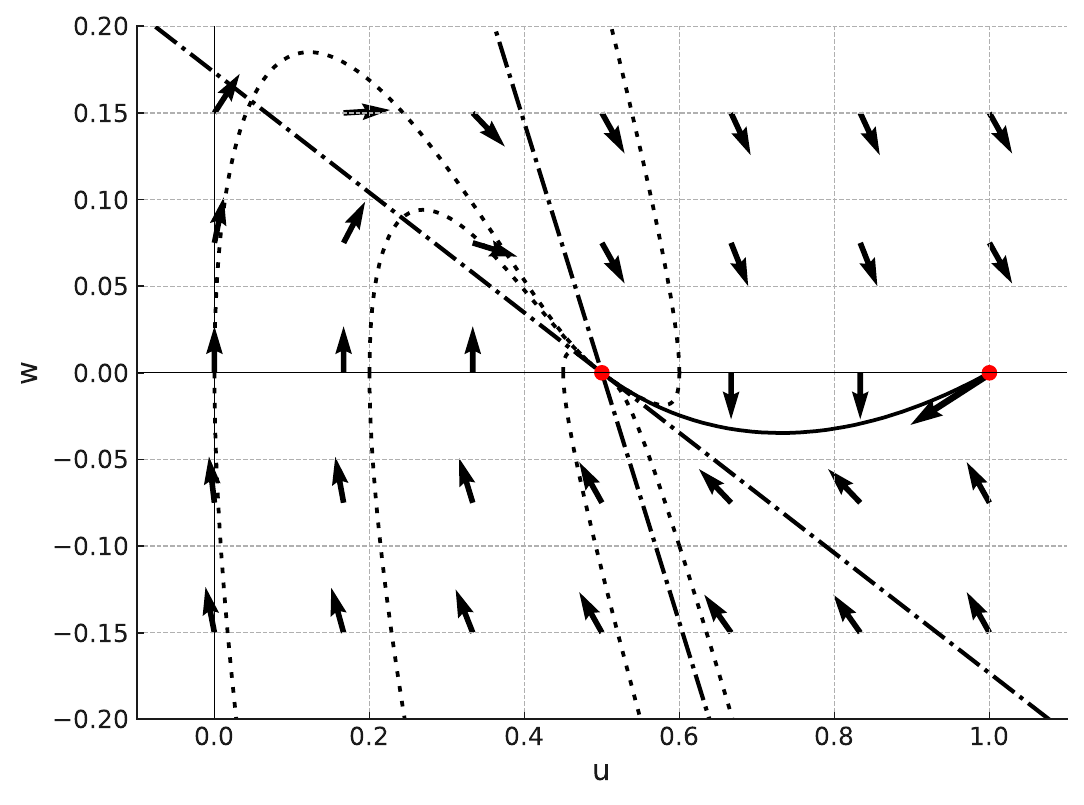}
    \caption{Phase portrait of \eqref{KPP_Sys} when $\alpha=1/2$ and $c=2\sqrt{0.8}$. For $c>2\sqrt{1-\alpha}$, the trajectory on the unstable manifold at $(1,0)$ approaches $(0.5,0)$ as $\xi \to +\infty$ without leaving the region $w<0$ (solid curve). The trajectory does not cross the dashed-dotted lines, whose slopes are $0>\lambda_+>\lambda_-$, respectively. (The lines coincide when $c=2\sqrt{1-\alpha}$.) }
    \label{Figure13}
\end{figure}

\subsection{Shooting argument}
Let $w^\pm(u;c):=w^\pm(u)$ be the solutions to the \textit{backward} initial value problems
\begin{equation}\label{BackODE1}
\left\{
\begin{array}{l l}
\frac{dw^\pm}{du}(u;c) = -c - \frac{(u-\alpha)(1-u)}{w^\pm}, \\ 
w^\pm(1;c)=0, \quad \frac{dw^\pm}{du}(1;c) = \Lambda_\mp \lessgtr 0, 
\end{array} 
\right.
\end{equation} 
 respectively, where $c \geq 0$, and $\Lambda_+$ and $\Lambda_-$ are defined in \eqref{1}. The stable manifold theorem implies that there exists a unique local solution $w^+$ (resp. $w^-$) to \eqref{BackODE1}, and it holds that $w^\pm \gtrless 0$ for all $u<1$ sufficiently close to $1$.  
By taking $\partial_c$ of  \eqref{BackODE1}, we have
\begin{equation}\label{EQ1}
\left\{
\begin{array}{l l}
\frac{d(\partial_c w^\pm)}{du}(u;c) = -1 + \frac{(u-\alpha)(1-u)}{(w^\pm)^2}\partial_c w^\pm, \\ 
\partial_c w^\pm(1;c)=0, \quad \frac{d(\partial_c w^\pm)}{du}(1;c) = \partial_c\Lambda_\mp < 0.
\end{array} 
\right.
\end{equation}  
We observe that for $c \geq 0$, we have  $\partial_c w^\pm(u)>0$ for $u<1$ sufficiently close to 1.  We omit the parameter $c$ when no confusion arises. 
\begin{lemma}\label{Lem5}
Consider \eqref{BackODE1}. Then, the following hold true: \\
(1) The maximal interval of existences $(u_c^-,1]$  of $w^-$ and $(u_c^+,1]$ of $w^+$ are finite for $c\in[0,2\sqrt{1-\alpha})$ and $c\in[0,+\infty)$, respectively. Furthermore, $u_c^\pm <\alpha$ and 
\begin{equation}\label{BackODE2}
\partial_c w^\pm(u) >0,  \quad u\in (u_c^\pm,1).
\end{equation}
(2) The mapping $c \in [0,2\sqrt{1-\alpha}) \mapsto u_c^- \in [(3\alpha-1)/2,\alpha)$ is strictly increasing and onto. \\
(3)  The mapping $c \in [0,+\infty) \mapsto u_c^+ \in (-\infty,(3\alpha-1)/2]$ is strictly decreasing and onto.  
\end{lemma}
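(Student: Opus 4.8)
The plan is to dispose of the three claims in order, using the phase‑plane machinery of Propositions~\ref{Pro_stead} and \ref{Proposi_1} together with the variational identity \eqref{EQ1}.

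\textit{Claim (1).} The graph $w=w^-(u)$ is precisely the portion lying in $\{w<0\}$ of the orbit of \eqref{KPP_Sys} on the unstable manifold at $(1,0)$. By Proposition~\ref{Proposi_1}(2), for $0<c<c^\ast$ this orbit first meets $\{w=0\}$ at a point whose $u$–coordinate lies in $(\tfrac{3\alpha-1}{2},\alpha)$, and by Proposition~\ref{Pro_stead}(1), for $c=0$ it meets it at $u=\tfrac{3\alpha-1}{2}$; since there $\frac{dw^-}{du}=-c-(u-\alpha)(1-u)/w^-\to-\infty$, that $u$–value is exactly the finite left endpoint $u_c^-\in[\tfrac{3\alpha-1}{2},\alpha)$. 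For $w^+$, \eqref{BackODE1} gives $w^+>0$ and $\frac{dw^+}{du}<0$ on $(\alpha,1)$, hence $w^+(\alpha)>0$ and $u_c^+<\alpha$; integrating $\frac{dw^+}{du}\le-c$ yields $w^+(\alpha)\ge c(1-\alpha)$, while on $(u_c^+,\alpha)$ the bound $\frac{dw^+}{du}>-c$ gives $w^+(u)\le w^+(\alpha)+c(\alpha-u)$, i.e.\ $w^+$ grows at most linearly as $u\to-\infty$. Plugging this into $(w^+)^2=2E+2V$, where $E(u)=E(u,w^+(u))=c\int_u^1 w^+\,dt\ge0$ by \eqref{Eq2_Thm1} and $V(u)=\int_u^1(s-\alpha)(1-s)\,ds$, we get $E(u)=O(u^2)$ but $V(u)=\tfrac13 u^3+O(u^2)\to-\infty$, so $(w^+)^2$ would go negative unless $w^+$ vanishes at a finite $u_c^+$; moreover $(w^+)^2>2V$ forces $V(u_c^+)\le0$, i.e.\ $u_c^+\le\tfrac{3\alpha-1}{2}<\alpha$. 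Finally, \eqref{BackODE2}: setting $y=\partial_c w^\pm$, \eqref{EQ1} gives $y(1)=0$, $y'(1)=\partial_c\Lambda_\mp<0$, so $y>0$ just left of $1$; if $y$ vanished in $(u_c^\pm,1)$, the largest such point $u_1$ would satisfy both $y'(u_1)\ge0$ (as $y>0$ on $(u_1,1)$) and $y'(u_1)=-1$ (by \eqref{EQ1}, since $y(u_1)=0$ kills the coefficient of $y$), a contradiction; hence $\partial_c w^\pm>0$ throughout.

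\textit{Claims (2)--(3).} Since $w^\pm=0$ but $(w^\pm)'=-(u_c^\pm-\alpha)(1-u_c^\pm)\neq0$ at the endpoint (because $u_c^\pm<\alpha<1$), the crossing of $\{w=0\}$ is transversal, and the implicit function theorem, with smooth dependence of the invariant manifolds on $c$, makes $c\mapsto u_c^\pm$ a $C^1$ map. For monotonicity I would introduce, for small $\epsilon>0$, the $u$–coordinate $\psi_\epsilon(c)$ of the crossing $|w^\pm|=\epsilon$ nearest $u_c^\pm$; this crossing is transversal, so $\psi_\epsilon\in C^1$ with $\psi_\epsilon'=-\partial_c w^\pm/\partial_u w^\pm$, which by \eqref{BackODE2} is $>0$ for $w^-$ (where $\partial_u w^-<0$ near $u_c^-$) and $<0$ for $w^+$ (where $\partial_u w^+>0$ near $u_c^+$); letting $\epsilon\downarrow0$ shows $c\mapsto u_c^-$ is nondecreasing and $c\mapsto u_c^+$ nonincreasing. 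Strictness follows by ruling out $u_{c_1}^\pm=u_{c_2}^\pm$ for $c_1<c_2$: at a shared endpoint $u^\ast$ the leading square‑root term of $(w^\pm)^2$, with coefficient $-2(u^\ast-\alpha)(1-u^\ast)$, is the same for both speeds, whereas the next ($|u-u^\ast|^{3/2}$) term has coefficient linear in $c$ with opposite signs for $w^-$ and $w^+$; comparing this with the strict ordering of $|w^\pm(\cdot;c_1)|$ and $|w^\pm(\cdot;c_2)|$ coming from \eqref{BackODE2} produces a contradiction.

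\textit{Surjectivity.} At $c=0$ the homoclinic loop $E=0$ meets $\{w=0\}$ only at $u=1$ and at the root $u=\tfrac{3\alpha-1}{2}$ of $V$, so $u_0^\pm=\tfrac{3\alpha-1}{2}$. As $c\uparrow c^\ast$ the orbit defining $w^-$ converges on compact $\xi$–intervals to the orbit at $c=c^\ast$, which runs monotonically into the node $(\alpha,0)$; hence the first return to $\{w=0\}$—and therefore $u_c^-$—tends to $\alpha$. As $c\to\infty$, the bound $w^+(\alpha)\ge c(1-\alpha)$ together with a bootstrap on $(u_c^+,\alpha)$ keeps $w^+>0$ on any fixed $[-M,1]$ once $c$ is large, so $u_c^+\to-\infty$. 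Continuity and strict monotonicity then identify the images as $[\tfrac{3\alpha-1}{2},\alpha)$ and $(-\infty,\tfrac{3\alpha-1}{2}]$ respectively. I expect the main obstacle to be exactly these endpoint statements—$u_c^-\to\alpha$ as $c\uparrow c^\ast$, where $(\alpha,0)$ degenerates from focus to node, and $u_c^+\to-\infty$—together with the upgrade from $\partial_c w^\pm>0$ to strict monotonicity across the endpoint, where $w^\pm$ has a vertical tangent; these need quantitative continuous‑dependence estimates rather than soft arguments.
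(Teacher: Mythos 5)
Your overall architecture is sound, and two of your steps are genuinely different from the paper's. The verification of \eqref{BackODE2} is essentially the paper's own argument. For the finiteness of $u_c^+$ and the bound $u_c^+\le\tfrac{3\alpha-1}{2}$, your energy argument --- writing $(w^+)^2=2E+2V$ with $E(u)=c\int_u^1w^+\,dt\ge0$ of at most quadratic growth (via \eqref{Eq2_Thm1} and the linear bound on $w^+$) while $V(u)\sim u^3/3\to-\infty$, so that $w^+$ must vanish at a finite $u_c^+$ with $V(u_c^+)\le0$ --- is correct and arguably cleaner than the paper's route, which instead gets $u_c^+\le\tfrac{3\alpha-1}{2}$ from \eqref{BackODE2} together with $u_0^+=\tfrac{3\alpha-1}{2}$ and proves finiteness by a persistence argument from $c=0$. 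Your bootstrap keeping $w^+>0$ on any $[-M,1]$ for large $c$, hence $u_c^+\to-\infty$, also works.

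The genuine gaps are in parts (2)--(3), precisely where the paper takes a simpler and more robust route that you should adopt. The paper does not use transversality, the implicit function theorem, or a fractional expansion at the vanishing point. It upgrades \eqref{BackODE2} to a \emph{uniform quantitative} bound: by \eqref{EQ1} and \eqref{BackODE2}, $\tfrac{d(\partial_cw^\pm)}{du}\le-1$ for $u\le\alpha$, hence $\partial_cw^\pm(u)\ge\partial_cw^\pm(\alpha)\ge C>0$ for all $u\le\alpha$, where $C$ is an infimum of $\partial_cw^\pm(\alpha;c)$ over a compact $c$-interval for $w^-$ and over all of $[0,\infty)$ for $w^+$ (the latter positivity needs the auxiliary estimate \eqref{EQ2}). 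Integrating in $c$ gives $w^\pm(u;c_2)-w^\pm(u;c_1)\ge C(c_2-c_1)$ for all $u\le\alpha$, and evaluating at $u=u_{c_1}^\pm$ yields strict monotonicity of the endpoint with no analysis at the singular point. By contrast, your strictness step rests on the expansion $(w^\pm)^2=A(u-u^\ast)\mp\tfrac{4c\sqrt{A}}{3}(u-u^\ast)^{3/2}+\cdots$ at a point where $dw^\pm/du$ blows up; the signs you compute are right, but justifying this expansion sharply enough to compare two solutions is exactly the work your write-up defers, and it is avoidable. More seriously, your argument that $u_c^-\to\alpha$ as $c\uparrow c^\ast$ via convergence on compact $\xi$-intervals does not close: the first return to $\{w=0\}$ escapes to $\xi=+\infty$ in the limit, so compact-interval convergence controls nothing at the endpoint. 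A fix within your own energy framework: since $E$ is nonincreasing along the orbit, $V(u_c^-)=-E(u_c^-,0)\ge V(u)-\tfrac12\bigl(w^-(u;c)\bigr)^2$ for any $u\in(\alpha,1)$; choosing $u$ close to $\alpha$ and using continuity of $w^-$ in $c$ at $c^\ast$ (where $w^-(\cdot;c^\ast)$ tends to $0$ at $\alpha$) forces $V(u_c^-)\to V(\alpha)$, hence $u_c^-\to\alpha$ by strict monotonicity of $V$ on $(-\infty,\alpha]$.
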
 

\begin{proof}
 We first show that  $\partial_c w^-(u) > 0$ on $u \in (u_c^-,1)$   without assuming that  $(u_c^-,1)$ is finite.  Suppose that there is $u_{\ast}\in (u_c^-,1)$ such that $\partial_c w^-(u_\ast) = 0$ and 
 \begin{equation}\label{0_0}
     \partial_c w^-(u)>0 \text{ for all } u\in(u_\ast,1).
 \end{equation}
 Then, we have   $\tfrac{d(\partial_c w^-)}{du} = -1$ at $u=u_\ast$ by \eqref{EQ1}. Since  $\tfrac{d(\partial_c w^-)}{du} <0$ at $u=1$, there exists $u_{\ast\ast}\in(u_\ast,1)$ such that $\partial_c w^-(u_{\ast\ast})=0$ by continuity. This contradicts to \eqref{0_0}. In a similar fashion, we obtain $\partial_c w^+(u) > 0$ on $u \in (u_c^+,1)$, and hence we prove \eqref{BackODE2}.

By phase plane analysis, we already proved that $\tfrac{3\alpha-1}{2}\leq u_c^-<\alpha$ for  $c\in[0,2\sqrt{1-\alpha})$ in Propositions~\ref{Pro_stead} and \ref{Proposi_1}. We note that  $w^-(u)<0$ on the maximal interval of existence $(u_c^-,1)$ and that $\textstyle\lim_{u\searrow u_c^-}w^-=0$. In particular, for $c\in[0,2\sqrt{1-\alpha})$, we have $\textstyle\lim_{u\searrow u_c^-}\tfrac{dw^-}{du}  = -\infty$.

We claim that for any given $\beta \in (0,2\sqrt{1-\alpha})$, there is a positive constant $C_\beta$, independent of $c\in[0,\beta]$, such that $\partial_c w^-(u) \geq C_\beta$ for all $u \leq \alpha$ and $c\in[0,\beta]$.  From \eqref{EQ1} and \eqref{BackODE2}, we have $\tfrac{d(\partial_c w^-)}{du} \leq -1$ for all $u\leq \alpha$. Integrating it from $u$ to $\alpha$, we have
\begin{equation}\label{Eq:0}
\begin{split}
    \partial_c w^-(u) & \geq \partial_c w^-(\alpha) + (\alpha-u)\\
    & \geq \partial_c w^-(\alpha) \\
    & \geq  \textstyle\inf_{c\in[0,\beta]}\partial_c w^-(\alpha)=:C_\beta > 0,
\end{split}
\end{equation}
where the strictly inequality is due to \eqref{BackODE2}. (We remark that $\partial_c w^-(\alpha)=0$ for all $c \geq 2\sqrt{1-\alpha}$ since $w^-(\alpha)=0$.)

We show that $u_{c_1}^- < u_{c_2}^-$ for $0\leq c_1<c_2< 2\sqrt{1-\alpha}$. Integrating \eqref{Eq:0} in $c$, we get
\begin{equation}\label{Eq:1}
w^-(u;c_2)-w^-(u;c_1) \geq C_{c_2}(c_2-c_1) >0
\end{equation}
for all $u\leq \alpha$. Here, we have chosen $\beta=c_2$. If $u_{c_1}^- \geq u_{c_2}^-$, then we must have $0=w^-(u_{c_1}^-;c_1) \geq  w^-(u_{c_1}^-;c_2)$ (recall that $w^-(u;c) < 0$ for $u\in(u_c^-,1)$ and $w^-(u;c) = 0$ for $u=u_c^-$), which contradicts to \eqref{Eq:1}.  

By continuity of $w^-$ in $c\geq 0$, we conclude that the mapping 
$$
c \in [0,2\sqrt{1-\alpha}) \mapsto u_c^- \in [(3\alpha-1)/2,\alpha)
$$
is strictly increasing and onto. This proves (2). 

We notice that \eqref{BackODE2} particularly implies that $u_c^+ \leq \tfrac{3\alpha-1}{2}$ for all $c \geq 0$, where $(\tfrac{3\alpha-1}{2},1)$ is the maximal interval of existence for $w^+(u;0)$. Since $ \textstyle\lim_{u\searrow u_0^+}\tfrac{dw^+}{du}(u;0)$ is nonzero, by continuity of $w^+$ in $c$, $w^+(u;c)$ vanishes at some point around $u_0^+=\tfrac{3\alpha-1}{2}$ for all sufficiently small $c>0$. Furthermore, at the vanishing points, we have $\tfrac{dw^+}{du} = +\infty$, which means that $u_c^+$ is finite for all sufficiently small $c>0$, (hence, for all $c>0$ by continuity). 

Now we show that $u_{c_2}^+ < u_{c_1}^+$ for $0 \leq c_1 < c_2$ following the above arguments for \eqref{Eq:0} and \eqref{Eq:1}. It is enough to show that  $C:=\textstyle\inf_{c\in[0,\infty)}\partial_c w^+(\alpha;c)>0$. If not, there is a sequence $c_k\to +\infty$ such that $\partial_c w^+(\alpha;c_k) \to 0$.  On the other hand, one can choose small $\delta>0$, uniform in $c$, such that
\begin{equation}\label{EQ2}
\frac{d(\partial_c w^+)}{du}(u;c) \leq -1/2
\end{equation}
for all $u\in(\alpha,\alpha+\delta)$ and $c\geq 0$. \eqref{EQ2} will be shown below. Integrating \eqref{EQ2} from $\alpha$ to $\alpha +\delta$, we have
\[
\partial_c w^+(\alpha+\delta;c) \leq -\frac{1}{2}\delta + \partial_c  w^+(\alpha;c).
\]
By letting $c_k \to +\infty$, we see that $\partial_c w^+(\alpha+\delta;c_k)<0$ for large $c_k$. This contradicts to \eqref{BackODE2}.

We claim that \eqref{EQ2} holds true. From \eqref{EQ1} and \eqref{BackODE2}, we have
\[
\frac{d(\partial_c w^+)}{du} \geq -1
\]
for $u\in(\alpha,1)$. Integrating it from $u$ to 1, we get $1-u \geq \partial_c w^+$ for $u\in(\alpha,1)$. Furthermore, $w^+(u;c)$ is increasing in $c$ for each $u\in (u_c^+,1)$ by (2). Hence,
\[
\frac{(u-\alpha)(1-u)}{(w^+)^2}\partial_c w^+ \leq \frac{(u-\alpha)(1-u)^2}{(w^+)^2} \leq C\frac{\delta}{(w^+)^2} \leq C'\delta < \frac{1}{2}
\]
for $u\in(\alpha,\alpha+\delta)$ and $c\in[0,\infty)$.  Combining all, together with \eqref{EQ1}, we get \eqref{EQ2}.

\end{proof}

\subsection{Discontinuous reaction: Proof of main theorems}
In this subsection, we prove Theorems \ref{thm1}--\ref{thm4}. We consider \eqref{KPP_Sys} with the step function:
\begin{equation}\label{KPP_Sys_dis}
\left\{
\begin{array}{l l}
u' = w, \\
w' = -cw  - [(u-\alpha)(1-u)]\chi_{\{u>0\}},
\end{array} 
\right.
\end{equation}
which is equivalent to  \eqref{tw}.  Observe that $(u,w)\equiv(0,0)$ satisfies \eqref{KPP_Sys_dis} in the classical sense.

We first prove Theorem \ref{thm1}.
\begin{proof}[Proof of Theorem \ref{thm1}]
As a direct consequence of Proposition \ref{Proposi_1} and Lemma \ref{Lem5}, we see that for each $0<\alpha<1/3$ (recall \eqref{OmegaAlp}), there is a unique $c=c_*=c_*(\alpha) \in (0,2\sqrt{1-\alpha})$ such that  \eqref{KPP_Sys_dis} admit the solutions $(u_\pm,w_\pm) \in C^\infty(\mathbb{R}_\pm)$ satisfying 
\begin{subequations}\label{Pr_T1}
\begin{align}
& \lim_{\xi \to -\infty}u_- =1 \quad \text{and} \quad \lim_{\xi \to 0^-}(u_-,w_-)=(0,0), \label{Pr_T11} \\
& \lim_{\xi \to 0^+}(u_+,w_+) =(0,0) \quad \text{and} \quad \lim_{\xi \to +\infty}u_+ =\alpha. \label{Pr_T12}
\end{align}
\end{subequations}
Furthermore, $u_-$ monotonically decreases on $\xi<0$, and $u_+$ approaches $\alpha$ in an oscillatory fashion as $\xi \to +\infty$ (see Figure~\ref{Figure12}).

We define the new functions $\tilde{u}_\pm(\xi)$ as follows: 
\begin{equation}\label{Pr_T2}
\tilde{u}_-(\xi):= \left\{
\begin{array}{l l}
u_-(\xi), & (\xi<0), \\
0, & (\xi \geq 0),
\end{array} 
\right.
\qquad 
\tilde{u}_+(\xi):= \left\{
\begin{array}{l l}
0, & (\xi<0), \\
u_+(\xi), & (\xi \geq 0).
\end{array} 
\right.
\end{equation}
Then, $\tilde{u}_\pm$ are in $C^1(\mathbb{R})\cap C^2(\mathbb{R}\setminus\{0\})$. Since $\lim_{\xi\to 0^-}\partial_{\xi\xi}\tilde{u}_- = \alpha$ and $\lim_{\xi\to 0^+}\partial_{\xi\xi}\tilde{u}_- = 0$, the second derivative of $\tilde{u}_-$ has a jump discontinuity at $\xi =0$. Hence, $\tilde{u}_-$  satisfies \eqref{KPP_Sys_dis} with \eqref{a1}  in the classical sense. This proves part (2) of Theorem~\ref{thm1}.

Now we prove part (1) of Theorem~\ref{thm1}. For arbitrary $L \geq 0$, we define
\begin{equation}\label{Pr_T3}
\tilde{u}_L(\xi):= \left\{
\begin{array}{l l}
u_-(\xi), & (\xi<0), \\
0, & (0 \leq \xi \leq L), \\
u_+(\xi-L), & (\xi \geq L)
\end{array} 
\right.
\end{equation}
(see Figure~\ref{NewFig1}). It is obvious that $\tilde{u}_L$ satisfies \eqref{KPP_Sys_dis} with \eqref{a2} in the classical sense. This proves statement (\textit{iii}).

Combining Proposition \ref{Proposi_1}, Lemma \ref{Lem5}, and the definition of $c_*$ above, it is straightforward to conclude that \eqref{KPP_Sys_dis} admits a strictly positive solution satisfying \eqref{a2} for all $c > c_*$. In particular, for $c_*<c<2\sqrt{1-\alpha}$, $u$ approaches $\alpha$ in an oscillatory fashion as $\xi \to +\infty$. On the other hand, for $c \geq 2\sqrt{1-\alpha}$, $u$ is monotonically decreasing. This proves statements~(\textit{i}) and (\textit{ii}). We completes the proof of part (2) of Theorem \ref{thm1}.    
\end{proof}

Next, we prove Theorem \ref{thm2}.
\begin{proof}[Proof of Theorem \ref{thm2}]
When $1/3 \leq \alpha < 1$, recalling \eqref{OmegaAlp}, we see that part (1) of Theorem~\ref{thm2} follows directly from Proposition \ref{Proposi_1}.  On the other hand, part (2) of Theorem~\ref{thm2} follows from symmetry \eqref{Symmetry} and the properties of $w^+$ in Lemma~\ref{Lem5}. We finish the proof of Theorem \ref{thm2}.
\end{proof}

We prove Theorem~\ref{thm3}.
\begin{proof}[Proof of Theorem \ref{thm3}]
Proposition \ref{Cor_Mono} implies that the set
\begin{equation*}
        A:=\{c \geq 0 : \tfrac{du(\xi;c)}{d\xi} \geq 0 \quad \textrm{for all } \xi \in\mathbb{R}\}
    \end{equation*}
    is nonempty and $2\in A$. Since it is a closed set,  $c^{\ast\ast}:=\min A$ is well-defined by continuity. Furthermore, we have $c^\ast\leq c^{**}\leq 2$ by \eqref{2}. 
    
In the case $0<\alpha\leq 1/3$, by Proposition \ref{Proposi_1}, we see that for all $c > 0=:c_{**}(\alpha)$, \eqref{KPP_Sys_dis} admits a solution satisfying \eqref{a4}. In the case $1/3<\alpha<1$, let $c_{\ast\ast}:=-c_*(\alpha)$, where $c_*$ is the speed in Theorem \ref{thm2} for which \eqref{KPP_Sys_dis} admits a solution connecting $(1,0)$ and $(0,0)$.  By Lemma \ref{Lem5} and  uniqueness of ODE, for all $c> c_{\ast\ast}$, there is a solution to \eqref{KPP_Sys_dis} connecting $(0,0)$ and $(\alpha,0)$ (recall that $(u,w)=(\alpha,0)$ is stable for $c > 0$). For all $0<c<c_{\ast\ast}$, there are no such solutions. This finishes the proof of part (1) of Theorem~\ref{thm3}. 

We prove part (2) of Theorem~\ref{thm3}. We note that $c_{\ast\ast} < c^{\ast\ast}$ by uniqueness of ODE. If $c\ge c^{**}$, the solution is monotone by the definition of $c^{**}$. If $c\in(c_{\ast\ast},c^\ast)$, the trajectory approaches $(\alpha,0)$ as $\xi \to +\infty$ in an oscillatory fashion. If $c^*<c^{**}$, we show that for $c\in[c^*,c^{**})$, there is a unique $\xi_0$  such that the local maximum of $u(\xi)$ larger than $\alpha$ is attained at $\xi=\xi_0$. Suppose that there exists such a point $\xi_0$. In other words, there exists $\delta>0$ such that
\begin{equation}\label{Eq_9}
w(\xi_0)=0, \quad w(\xi)>0 \text{ for } \xi \in (\xi_0-\delta,\xi_0), \quad w(\xi)<0  \text{ for } \xi \in (\xi_0, \xi_0+ \delta).
\end{equation}
Then, the same argument as in the proof of Proposition \ref{Proposi_1} implies that the trajectory approaches $(\alpha,0)$ without leaving the region $w<0$.  We complete the proof of Theorem~\ref{thm3}.

\end{proof}

Finally, we prove Theorem \ref{thm4}.
\begin{proof}[Proof of Theorem \ref{thm4}] 
Theorem~\ref{thm4} follows by gluing the solutions of \eqref{KPP_Sys}, constructed in Proposition~\ref{Pro_stead}, with the trivial solution  $u\equiv 0$ of \eqref{KPP_Sys_dis}, in a similar manner to the proof of Theorem~\ref{thm1}. We omit the details.
\end{proof}

\section{Further discussions}\label{Sec_3}
\subsection{Dynamics of the model}\label{Sec3_1}
Using our main results and the comparison principle established in  \cite{CKKP} (see also the Appendix of the present paper), We present results on the dynamics of solutions to the initial value problem associated with \eqref{sys}. In particular, we deduce the finite-time extinction and the finite-speed propagation with a compact support.

Let $\gamma(t)$ be the solution to the ODE
\begin{equation}\label{gamma1}
\gamma_t = (\gamma-\alpha)(1-\gamma).
\end{equation}
If $0<\gamma(0)<\alpha$, then there exists $T_\ast>0$ such that $\gamma(T_\ast)=0$.

\begin{theorem}[Finite time extinction I]
Let $\alpha\in(0,1)$ be given. For any non-negative initial data $u_0$ satisfying $\sup_{x\in\mathbb{R}}u_0 < \alpha$, there exists $T_\ast>0$ such that the solution to the initial value problem \eqref{sys} satisfies $u(x,t) \equiv 0$ for all $x\in\mathbb{R}$ and $t\geq T_\ast$.  
\end{theorem}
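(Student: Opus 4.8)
The plan is to dominate $u$ by a spatially homogeneous barrier obtained from the reaction ODE \eqref{gamma1}, and then invoke the comparison principle. We may assume $M:=\sup_{x\in\mathbb{R}}u_0>0$, since otherwise $u_0\equiv 0$ and there is nothing to prove; by hypothesis $0<M<\alpha$. Let $\gamma$ solve \eqref{gamma1} with $\gamma(0)=M$. Because $\gamma_t=(\gamma-\alpha)(1-\gamma)<0$ on $(0,\alpha)$ and $\gamma_t\to-\alpha$ as $\gamma\to0^{+}$, the function $\gamma$ is strictly decreasing and reaches $0$ at a finite time $T_\ast>0$; this is exactly the fact recalled just before the statement.

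First I would introduce the glued barrier
\[
\bar u(x,t):=\begin{cases}\gamma(t),&0\le t\le T_\ast,\\ 0,&t>T_\ast,\end{cases}
\]
which is continuous, nonnegative, independent of $x$, nonincreasing in $t$, and Lipschitz in $t$. On $\{t<T_\ast\}$ we have $\gamma(t)\in(0,M]\subset(0,\alpha)$, so $\chi_{\{\bar u>0\}}=1$ and $\bar u_t=\bar u_{xx}+(\bar u-\alpha)(1-\bar u)\,\chi_{\{\bar u>0\}}$ holds with equality; on $\{t>T_\ast\}$ both sides vanish, again with equality. Hence $\bar u$ satisfies \eqref{sys} in the classical sense for every $t\neq T_\ast$, and being continuous across $t=T_\ast$ it is a (weak) solution of \eqref{sys} with the constant initial datum $u\equiv M$. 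Alternatively, one can simply \emph{define} $\bar u$ to be that solution and observe, by the uniqueness of solutions to the initial value problem from \cite{CKKP} (see also the Appendix), that it must be spatially constant and therefore equals $\gamma$ on $[0,T_\ast]$ and $0$ afterward; this route avoids any discussion of the matching time.

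Next I would apply the comparison principle for \eqref{sys}. Since $u_0(x)\le M=\bar u(x,0)$ for every $x\in\mathbb{R}$, ordered initial data give $0\le u(x,t)\le\bar u(x,t)$ for all $x\in\mathbb{R}$ and $t\ge0$. For $t\ge T_\ast$ we have $\bar u(x,t)=0$, whence $u(x,t)\equiv0$, which is the assertion with this $T_\ast$.

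The only point requiring care is the behaviour of $\bar u$ at $t=T_\ast$, where $\partial_t\bar u$ jumps upward from $-\alpha$ to $0$ and the reaction term loses smoothness; here the discontinuity of the nonlinearity at $u=0$ works in our favour, because once $\bar u$ reaches $0$ the factor $\chi_{\{\bar u>0\}}$ switches off and $\bar u\equiv0$ is a genuine solution, so no spurious forcing arises and $\bar u$ is an admissible barrier. Viewing $\bar u$ as the unique solution with constant data $M$ removes even this mild subtlety, and all remaining steps are direct applications of the comparison principle already recorded in \cite{CKKP}.
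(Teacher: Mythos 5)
Your proposal is correct and follows essentially the same route as the paper: the authors likewise take the ODE solution $\gamma$ with $\gamma(0)=\sup u_0<\alpha$ (extended by zero after its extinction time) as a super-solution and conclude by the comparison principle of \cite{CKKP}. Your additional care about the matching at $t=T_\ast$ is a harmless elaboration of the same argument.
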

\begin{proof}
The solution $\gamma(t)$ to the ODE \eqref{gamma1} with the initial value $\gamma(0)=\sup_{x\in\mathbb{R}}u_0<\alpha$ is a super-solution of \eqref{sys}, and $u\equiv 0$ is a sub-solution of \eqref{sys}. Since  $0\leq u_0<\alpha$, the conclusion follows from the comparison principle.  
\end{proof}

Let $\bar{u}$ be the compactly supported nontrivial solution to \eqref{tw} with $c=0$ in Theorem \ref{thm4}.

\begin{theorem}[Finite time extinction II]
Let $\alpha\in(0,1/3)$ be given. For any nonnegative initial data $u_0$ satisfying  $\inf_{\xi\in\mathbb{R}}(\bar{u}-u_0) >0$, there exists $T_\ast>0$ such that the solution to the initial value problem \eqref{sys} satisfies $u(x,t) \equiv 0$ for all $x\in\mathbb{R}$ and $t\geq T_\ast$.  
\end{theorem}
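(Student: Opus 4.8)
The plan is to reduce the statement to the preceding result (Finite time extinction I): it is enough to produce a finite $T_0>0$ with $\sup_{x\in\mathbb R}u(x,T_0)<\alpha$, since then the spatially homogeneous super-solution $\gamma(t)$ of \eqref{gamma1} with $\gamma(T_0)=\sup_x u(x,T_0)$ lies above $u$, vanishes at some finite $T_0+T_\ast$, and together with the sub-solution $u\equiv0$ and the comparison principle forces $u(\cdot,t)\equiv0$ for all $t\ge T_0+T_\ast$. So the substance of the proof is to show that $u(\cdot,t)\to0$ uniformly as $t\to\infty$.

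First I would trap the solution. Since $\bar u$ is a stationary solution of \eqref{sys}, hence a super-solution, $u\equiv0$ is a sub-solution, and $0\le u_0\le\bar u$, the comparison principle of \cite{CKKP} gives $0\le u(x,t)\le\bar u(x)$ for all $t\ge0$. Hence $u(\cdot,t)$ is supported in the compact interval $[-L,L]:=\operatorname{supp}\bar u$ with $u(\pm L,t)=0$ for every $t$ — in particular the solution does not spread — and $u$ may be regarded as a solution of a homogeneous Dirichlet problem on $[-L,L]$. Parabolic smoothing makes the orbit $\{u(\cdot,t):t\ge1\}$ precompact in $C^1([-L,L])$, so the $\omega$-limit set $\omega(u_0)$ is nonempty, compact and connected.

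Next I would run a Lyapunov/LaSalle argument with the energy
\[
\mathcal E[v]:=\int_{-L}^{L}\Big(\tfrac12 v_x^2-F(v)\Big)\,dx,\qquad F(s):=\int_0^s(\sigma-\alpha)(1-\sigma)\chi_{\{\sigma>0\}}\,d\sigma,
\]
which satisfies $\tfrac{d}{dt}\mathcal E[u(\cdot,t)]=-\int_{-L}^{L}u_t^2\,dx\le0$, with equality only at stationary solutions. By LaSalle's invariance principle $\omega(u_0)$ consists of stationary solutions of \eqref{tw} with $c=0$ satisfying $0\le v\le\bar u$ on $[-L,L]$ and $v(\pm L)=0$. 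A phase-plane argument as in Proposition~\ref{Pro_stead} classifies these: wherever $v>0$, the arc $(v,v')$ leaves and returns to $(0,0)$ and hence lies on the level set $E(\cdot,\cdot)=E(0,0)$, whose unique closed loop has half-width exactly $L$; so the only possibilities are $v\equiv0$ and $v=\bar u$. Since $\mathcal E$ is constant on the connected set $\omega(u_0)$ and $\mathcal E[0]=0<\mathcal E[\bar u]$, the set $\omega(u_0)$ is a single point, either $\{0\}$ or $\{\bar u\}$.

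It remains to exclude $\omega(u_0)=\{\bar u\}$, and this is the main obstacle. The natural route is the linear instability of $\bar u$ as a threshold profile: if $u(\cdot,t)\to\bar u$ then $w:=\bar u-u\ge0$ is not identically zero (as $u_0\not\equiv\bar u$), is strictly positive inside $(-L,L)\times(0,\infty)$ by the strong maximum principle, yet tends uniformly to $0$ — which contradicts the fact that the linearized operator $\phi\mapsto\phi_{xx}+(1+\alpha-2\bar u)\phi$ on $[-L,L]$ with Dirichlet conditions has a positive principal eigenvalue $\mu_1$ with positive eigenfunction $\phi_1$, since then $\tfrac{d}{dt}\int w\phi_1$ is bounded below by a positive multiple of $\int w\phi_1$ while $w$ is small, preventing $\int w\phi_1\to0$ unless $w\equiv0$. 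The delicate point is that this linearization is valid only where $\bar u>0$: at the free boundary $\xi=\pm L$ the reaction $(u-\alpha)(1-u)\chi_{\{u>0\}}$ is discontinuous in $u$, so one must either localize the instability argument to a slightly smaller interval on which $\bar u$ is bounded away from $0$ (whose Dirichlet eigenvalue is still positive) or import the threshold statement of \cite{CKKP}. Granting this, $\omega(u_0)=\{0\}$, hence $\sup_x u(x,t)\to0$, the time $T_0$ exists, and the theorem follows. (As a side remark, the sub-threshold nature of such data is also visible through the explicit stationary super-solutions $\sigma\bar u$, valid whenever $\sigma u_\ast^2\le\alpha$ with $u_\ast:=\max\bar u$, since $(\sigma\bar u)''+f(\sigma\bar u)=(1-\sigma)(\sigma\bar u^2-\alpha)\le0$ there; any solution that falls below $\sigma\bar u$ for some $\sigma<\alpha/u_\ast$ has $\sup u<\alpha$ from then on.)
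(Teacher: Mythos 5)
Your proposal takes a genuinely different route from the paper, and it contains a real gap at its decisive step. The paper's proof is a two-line comparison argument: since $\inf(\bar u-u_0)>0$, one can pick $c>0$ so small that the traveling wave $\bar u_c$ of Theorem~\ref{thm3} satisfying \eqref{a4} --- whose profile for small $c$ tracks the rising part of $\bar u$ and then settles near $\alpha>0$ instead of returning to $0$ --- dominates $u_0$ after a translation, and likewise for its reflection, which by the symmetry \eqref{Symmetry} is an exact solution moving leftward with speed $c$. Comparison with these two waves forces $u(x,t)=0$ for $x\le x_1+ct$ and for $x\ge x_2-ct$, so the support is squeezed to nothing at time $T_\ast=(x_2-x_1)/(2c)$. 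Note that this is the only place the \emph{strict} gap $\inf(\bar u-u_0)>0$ is used; your scheme --- trap $u$ below $\bar u$, run LaSalle, classify the steady states under $\bar u$ as $\{0,\bar u\}$, exclude $\bar u$ by instability, then invoke Finite time extinction I --- only ever uses $u_0\le\bar u$, $u_0\not\equiv\bar u$, which is a sign that it is doing far more work than the statement requires.

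The gap is exactly where you write ``this is the main obstacle'': you do not actually rule out $\omega(u_0)=\{\bar u\}$. The eigenvalue claim itself is correct (indeed $\bar u'$ is a Dirichlet eigenfunction on $(-L,L)$ with eigenvalue $0$ and one interior zero, so the principal eigenvalue is positive by Sturm--Liouville theory), but the instability computation needs $f(\bar u)-f(u)=q(x,t)\,(\bar u-u)$ with $q$ uniformly close to $f'(\bar u)$, and near the free boundary, where $u$ may vanish while $\bar u$ is small and positive, the quotient behaves like $-\alpha/\bar u$ because of the jump of $f$ at $0$; the same unboundedness undercuts your appeal to the strong maximum principle for $w=\bar u-u$. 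You name two possible repairs (localize to $[-L+\eta,L-\eta]$, or import the threshold result of \cite{CKKP}) but execute neither; the localization can be made to work (the boundary terms in $\int w_{xx}\phi_1^\eta$ have a favorable sign since $w\ge0$ and $\phi_1^\eta$ vanishes at the endpoints with inward-pointing derivative, and one must still show that $u$ is eventually bounded away from $0$ on the smaller interval so that the cutoff is inactive there), but as written the argument is incomplete. There are further unaddressed technicalities upstream --- differentiability of $t\mapsto\mathcal{E}[u(\cdot,t)]$ across the set $\{u=0\}$, the validity of LaSalle's invariance principle for the semiflow generated by a discontinuous reaction, and precompactness of the orbit --- all of which require the well-posedness theory of the Appendix. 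The concluding side remark about the super-solutions $\sigma\bar u$ is correct as a computation but does not close the gap, since it presupposes that $u$ eventually falls below $\sigma\bar u$. I would recommend replacing the dynamical-systems argument by the direct comparison with the slowly moving waves of Theorem~\ref{thm3}.
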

\begin{proof}
Since the infimum is strictly positive, we can choose sufficiently small $c>0$  such that \eqref{tw} admits a solution $\bar{u}_c$ satisfying $0\leq u_0 \leq \bar{u}_c$ for all $\xi \in\mathbb{R}$ and \eqref{a4} (see part (1) of Theorem~\ref{thm3} and Figure~\ref{NewFig3}). By comparison principle and symmetry \eqref{Symmetry}, the conclusion follows.  
\end{proof}
\begin{theorem}[Finite time extinction III and Free-boundaries]
Let $\alpha\in(0,1)$ be given. For any nonnegative initial data $u_0\in C_c^\infty(\mathbb{R})$ satisfying $\max_{x\in\mathbb{R}}u_0 <1$, the solution to \eqref{sys} is compactly supported for all $t\geq 0$. In particular, if $\alpha>1/3$, the solution becomes identically zero in finite time. 
\end{theorem}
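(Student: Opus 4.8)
The plan is to establish finite speed of propagation for \eqref{sys} by trapping the solution underneath two one-sided traveling-wave barriers, and then, in the regime $\alpha>1/3$, to exploit the fact that these barriers \emph{retreat}. Let $\psi$ be the monotone bistable front with a free boundary whose existence and unique speed $c_\ast$ are provided by Theorem~\ref{thm1}(2) for $0<\alpha<1/3$ and by Theorem~\ref{thm2}(2) for $1/3\le\alpha<1$; normalize $\psi$ so that $\psi(\xi)=0$ iff $\xi\ge0$ and $\psi(\xi)\to1$ as $\xi\to-\infty$, and recall $c_\ast>0$ if $\alpha<1/3$, $c_\ast=0$ if $\alpha=1/3$, and $c_\ast<0$ if $\alpha>1/3$. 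A direct substitution into \eqref{sys} shows that, for every $a\in\mathbb{R}$, the function $\Phi_R(x,t):=\psi(x-c_\ast t-a)$ is a solution of \eqref{sys}, and, using the symmetry \eqref{Symmetry}, that $\Phi_L(x,t):=\psi(-x-c_\ast t-b)$ is also a solution for every $b\in\mathbb{R}$; by construction $\Phi_R(\cdot,t)\equiv0$ on $\{x\ge c_\ast t+a\}$ and $\Phi_L(\cdot,t)\equiv0$ on $\{x\le -c_\ast t-b\}$.

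Next I would fix $R>0$ with $\mathrm{supp}\,u_0\subset[-R,R]$, set $M:=\max_x u_0<1$, and use $\psi(\xi)\to1$ together with the monotonicity of $\psi$ to choose $a,b>R$ large enough that $\psi(R-a)>M$ and $\psi(R-b)>M$. Then $\Phi_R(\cdot,0)=\psi(\cdot-a)$ satisfies $\Phi_R(\cdot,0)\ge M\ge u_0$ on $[-R,R]$ and $\Phi_R(\cdot,0)\ge0=u_0$ off $[-R,R]$, so $\Phi_R(\cdot,0)\ge u_0$ on $\mathbb{R}$, and likewise $\Phi_L(\cdot,0)\ge u_0$ on $\mathbb{R}$. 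The comparison principle for \eqref{sys} (from \cite{CKKP}; see also the Appendix), applicable because $u$, $\Phi_R$, and $\Phi_L$ all lie in the admissible class of nonnegative solutions with Lipschitz spatial derivative, then yields $u\le\Phi_R$ and $u\le\Phi_L$ on $\mathbb{R}\times[0,\infty)$. Hence $u(x,t)=0$ whenever $x\ge c_\ast t+a$ or $x\le -c_\ast t-b$, that is,
\[
\mathrm{supp}\,u(\cdot,t)\ \subseteq\ [\,-c_\ast t-b,\ c_\ast t+a\,]\qquad\text{for all }t\ge0,
\]
which is a bounded interval for every fixed $t$; this establishes that $u(\cdot,t)$ is compactly supported for all $t\ge0$ and all $\alpha\in(0,1)$.

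For the extinction claim, suppose $\alpha>1/3$, so that $c_\ast<0$ by Theorem~\ref{thm2}(2). Writing the confining interval as $[\,|c_\ast|t-b,\ a-|c_\ast|t\,]$, we see it is empty as soon as $2|c_\ast|t>a+b$; therefore $u(\cdot,t)\equiv0$ for every $t\ge T_\ast:=(a+b)/(2|c_\ast|)$. (When $\alpha=1/3$ the interval is the fixed $[-b,a]$, and when $\alpha<1/3$ it widens linearly; in both cases it remains compact, consistent with the absence of an extinction statement there.)

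I expect the only delicate point to be the verification underpinning the comparison step: that each shifted front $\psi(x-c_\ast t-a)$ and its spatial reflection are genuine solutions of \eqref{sys} \emph{across} their free boundaries (where $\psi''$ jumps), and that the solution $u$ issuing from $C_c^\infty$ data belongs to the uniqueness/comparison class of \cite{CKKP} --- this is precisely where the discontinuity of the reaction at $u=0$ must be handled carefully, but it is exactly what that framework (and the Appendix) is set up to provide. It is also worth flagging that the hypothesis $\max u_0<1$ enters in an essential way: it is what allows the plateau of $\psi$ to be pushed above the data by enlarging the shifts $a,b$, and the comparison with $\psi<1$ would fail without it. No lower bound on $\max u_0$ is needed, so this single argument covers all admissible data (when $\max u_0\le\alpha$ the cruder first finite-time extinction result applies as well).
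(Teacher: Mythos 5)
Your proposal is correct and follows essentially the same route as the paper's proof: dominate $u_0$ by the bistable free-boundary front connecting $1$ and $0$ (Theorem~\ref{thm1}(2) or \ref{thm2}(2)) and by its spatial reflection obtained from the symmetry \eqref{Symmetry}, apply the comparison principle to trap the support in $[-c_\ast t-b,\,c_\ast t+a]$, and note that this interval empties in finite time when $c_\ast<0$ (i.e.\ $\alpha>1/3$). The paper states this in two sentences; your write-up simply makes the choice of shifts, the verification that the shifted and reflected fronts are genuine solutions across their free boundaries, and the admissibility for the comparison class explicit.
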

\begin{proof}
In the case $0<\alpha<1/3$, we may choose $x_1$ such that $u_0(x) \leq \bar{u}_1(x-x_1)$ at $t=0$, where $\bar{u}_1$ is the solution to \eqref{tw} with $c_\ast>0$ connecting $1$ and $0$ (see part (2) of Theorem~\ref{thm1}). The result follows from the comparison principle and symmetry \eqref{Symmetry}.

For the case $1/3 \leq  \alpha <1$, we make use of the solution connecting $1$ and $0$ (see part (2) of Theorem~\ref{thm2}). The results follow from comparison and symmetry in a similar fashion. We finish the proof.
\end{proof}

\subsection{Stability of traveling wave and stationary solutions}\label{Sec3_2}

In this study, the traveling waves connecting $1$ and $\alpha$ that do not attain the value $0$ exhibit characteristics analogous to those of Fisher–KPP type waves (see Figure~\ref{NewFig1}). Therefore, their qualitative behavior can be readily anticipated. Among them, the monotone solutions connecting $1$ and $\alpha$ described in Theorem~\ref{thm1} are spectrally unstable in the standard $L^2(\mathbb{R})$ space, but are spectrally stable in exponentially weighted $L^2(\mathbb{R})$ spaces \cite{Sat}. In contrast, the oscillatory solutions in Theorem~\ref{thm2} are absolutely unstable. For further details, we refer the reader to \cite{Sand} and the references therein. On the other hand, when the traveling wave solution attains the value $0$, it gives rise to a free boundary that separates the region of positive values from the zero region.

Numerical simulations (see Figures~\ref{Fig5_1}--\ref{Fig8_1}) suggest that the monotone traveling wave (or stationary) solutions of \eqref{sys} with free boundaries (i.e., the monotone solutions of~\eqref{tw} satisfying \eqref{a1} or \eqref{a4}) are stable.  
Nevertheless, the discontinuity in the reaction term introduces substantial challenges in rigorously proving the stability of such waves. First, since the reaction term is discontinuous at $u=0$, linearization near $u=0$ is not feasible. This prevents the direct application of spectral analysis techniques such as those used in~\cite{LRP}. Furthermore, the local behaviors of the reaction term $f$ near the stable equilibriums 0 and 1 are qualitatively different. More precisely, solutions to \eqref{Model} approach 0 in finite time, whereas they approach 1 only asymptotically as $t\to +\infty$. This complicates the construction of sub- and super-solutions as done in~\cite{Giletti2023,Ho,XJMY2}, where the stability of monotone traveling waves with free boundaries has been studied, using the arguments of \cite{Fi,FM}.

\begin{figure}[ht]
\begin{tabular}[]{@{}c@{}@{}c@{}} 
\resizebox{0.5\textwidth}{!}{\includegraphics{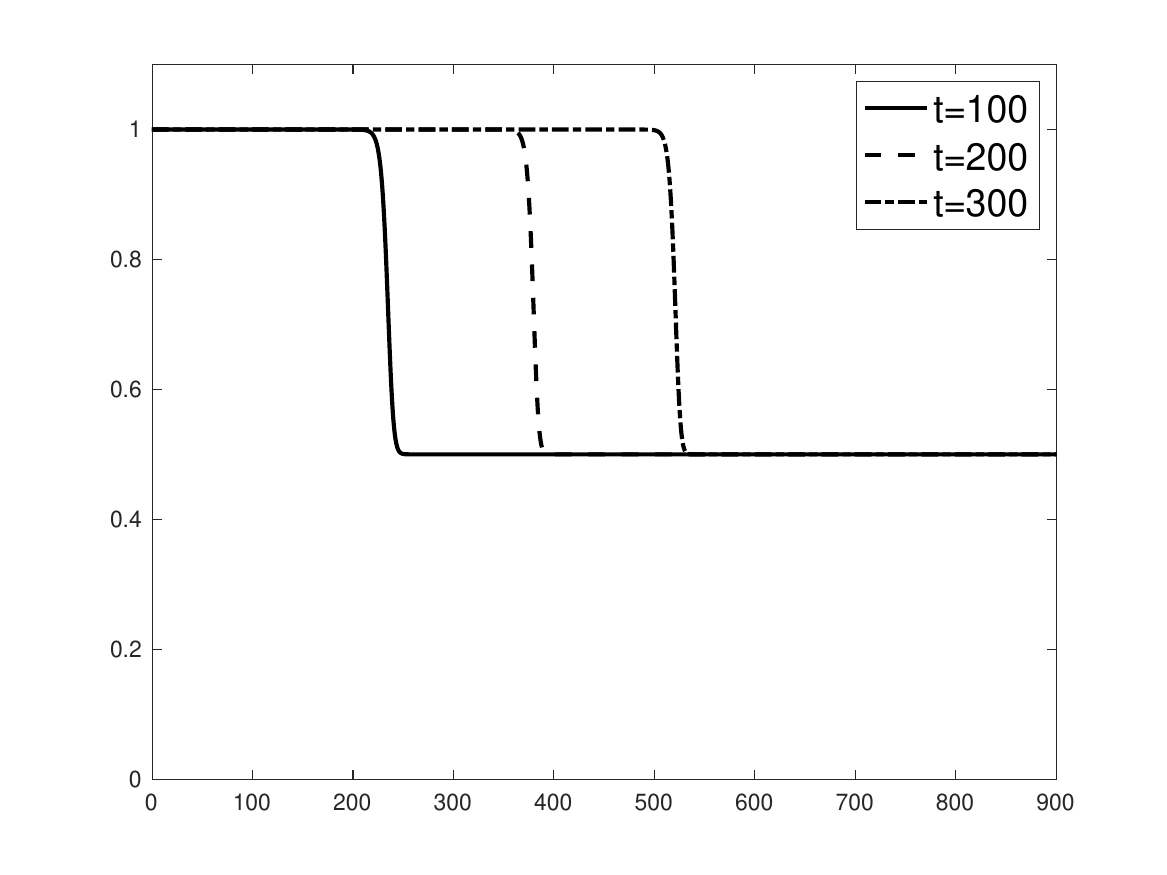}}  & \resizebox{0.5\textwidth}{!}{\includegraphics{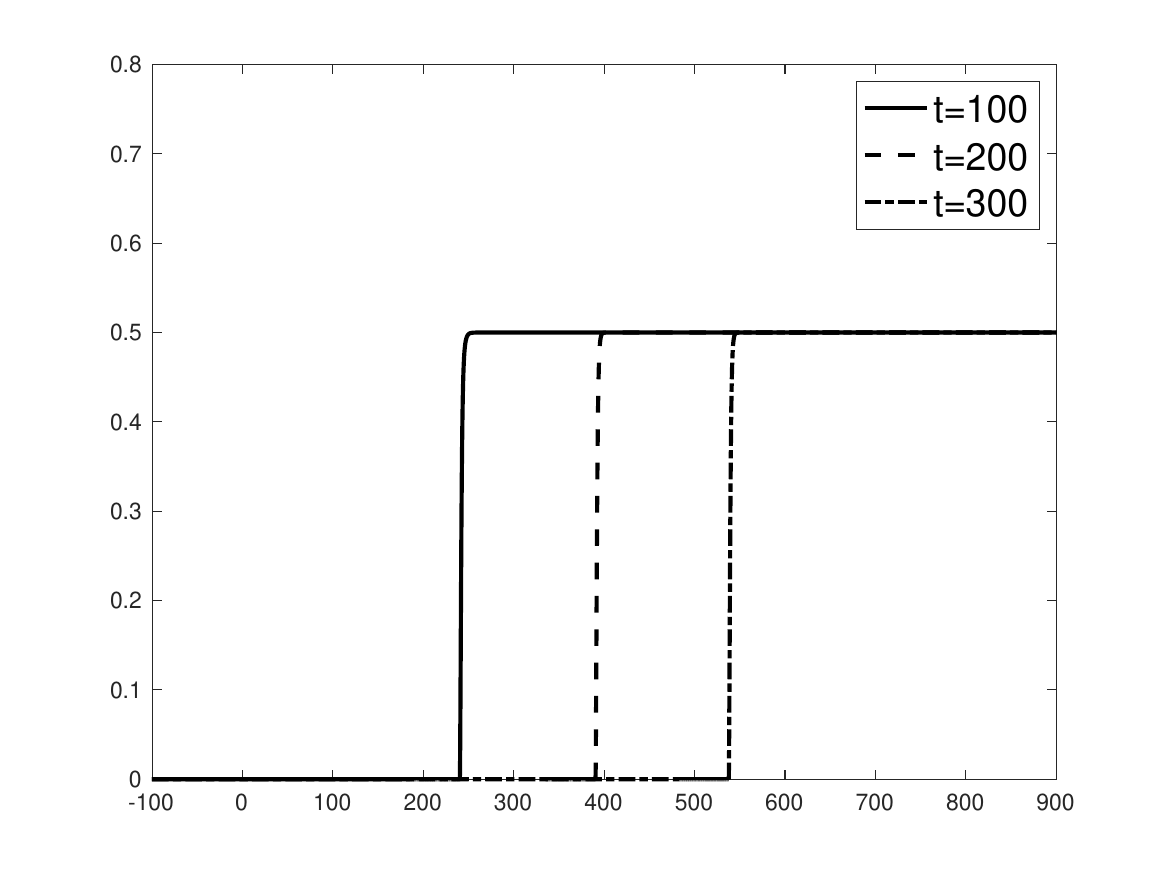}} \\
(a) $u_0(x)=\tfrac{1-\alpha}{2}(\tanh(-0.1x)+1)+\alpha$ & (b) $u_0(x)=\tfrac{\alpha}{2}(\tanh(0.1x)+1)$
\end{tabular}
\caption{The numerical solutions of \eqref{sys} with $\alpha=0.5$ for the initial data $u_0$. In (a), the solution asymptotically converges to a monotone traveling solution with speed $\approx \sqrt{2}$ corresponding to the minimal speed $c^\ast$.  In (b), the solution asymptotically converges to a monotone traveling wave solution with speed $\approx 1.472$. This suggests that the minimal speed of the monotone traveling waves connecting 0 and $\alpha=0.5$ is larger than $c^\ast = \sqrt{2}$ and smaller than 2 (see Theorem \ref{thm3}). }
\label{Fig5_1}
\end{figure}

\begin{figure}[ht]
\begin{tabular}[]{@{}c@{}@{}c@{}} 
\resizebox{0.5\textwidth}{!}{\includegraphics{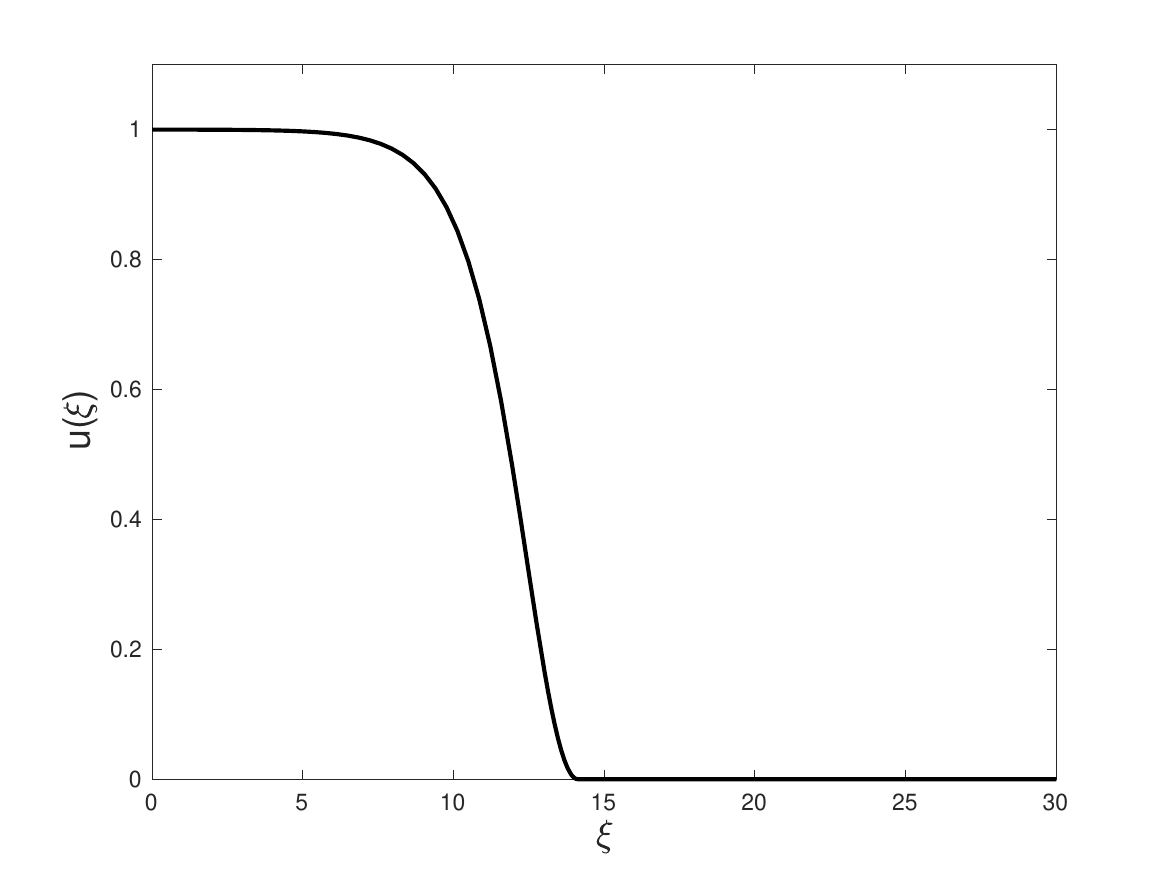}}  & \resizebox{0.5\textwidth}{!}{\includegraphics{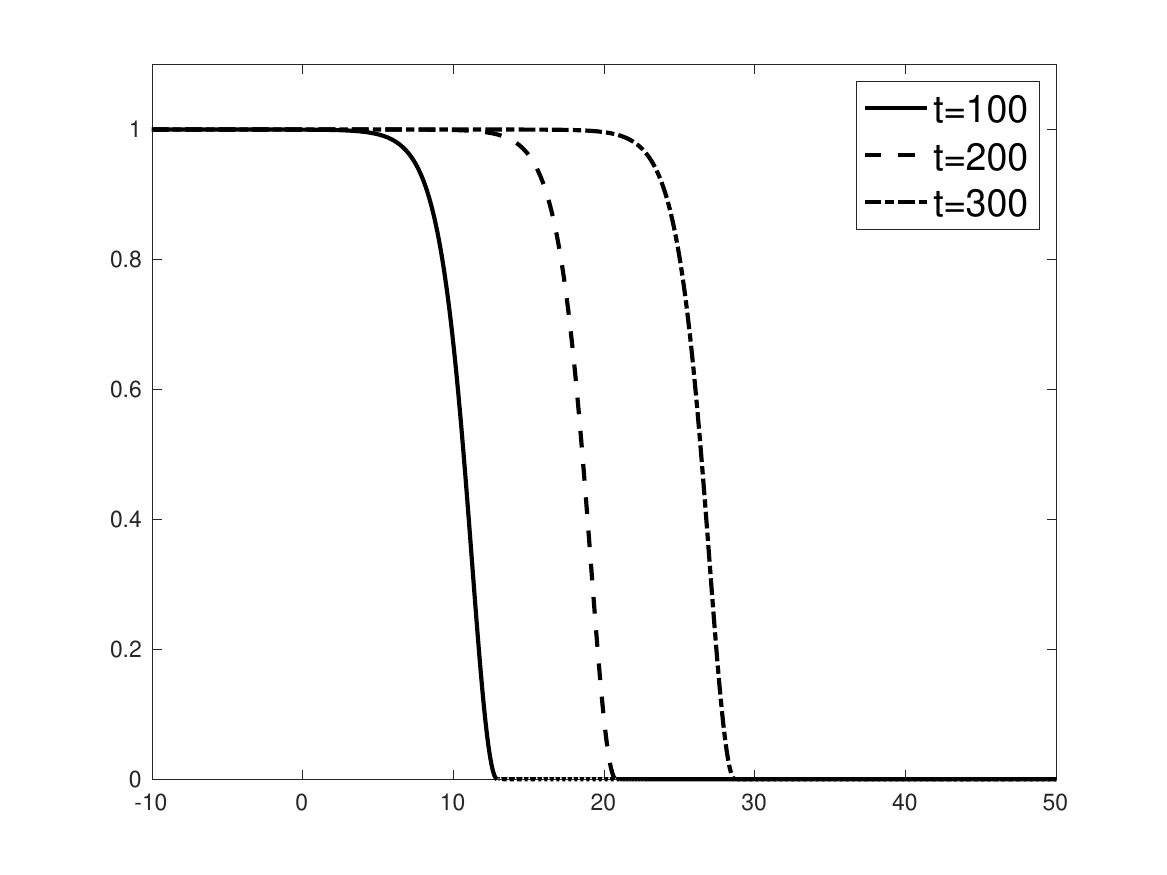}} \\
(a)  & (b) 
\end{tabular}
\caption{(a): Profile of the solution to \eqref{tw}  connecting 1 and 0 (Theorem~\ref{thm1}). For $\alpha=0.3$,  $c_*(\alpha) \approx 0.0792$.  (b): The numerical solution to \eqref{sys} with  $\alpha=0.3$ for the initial data $u_0(x)=\tfrac{1}{2}(\tanh(-0.1x)+1)$. The propagation speed of the front is approximately $0.08$.}  
\label{Fig1_1}
\end{figure}

\begin{figure}[ht]
\begin{tabular}[]{@{}c@{}@{}c@{}} 
\resizebox{0.5\textwidth}{!}{\includegraphics{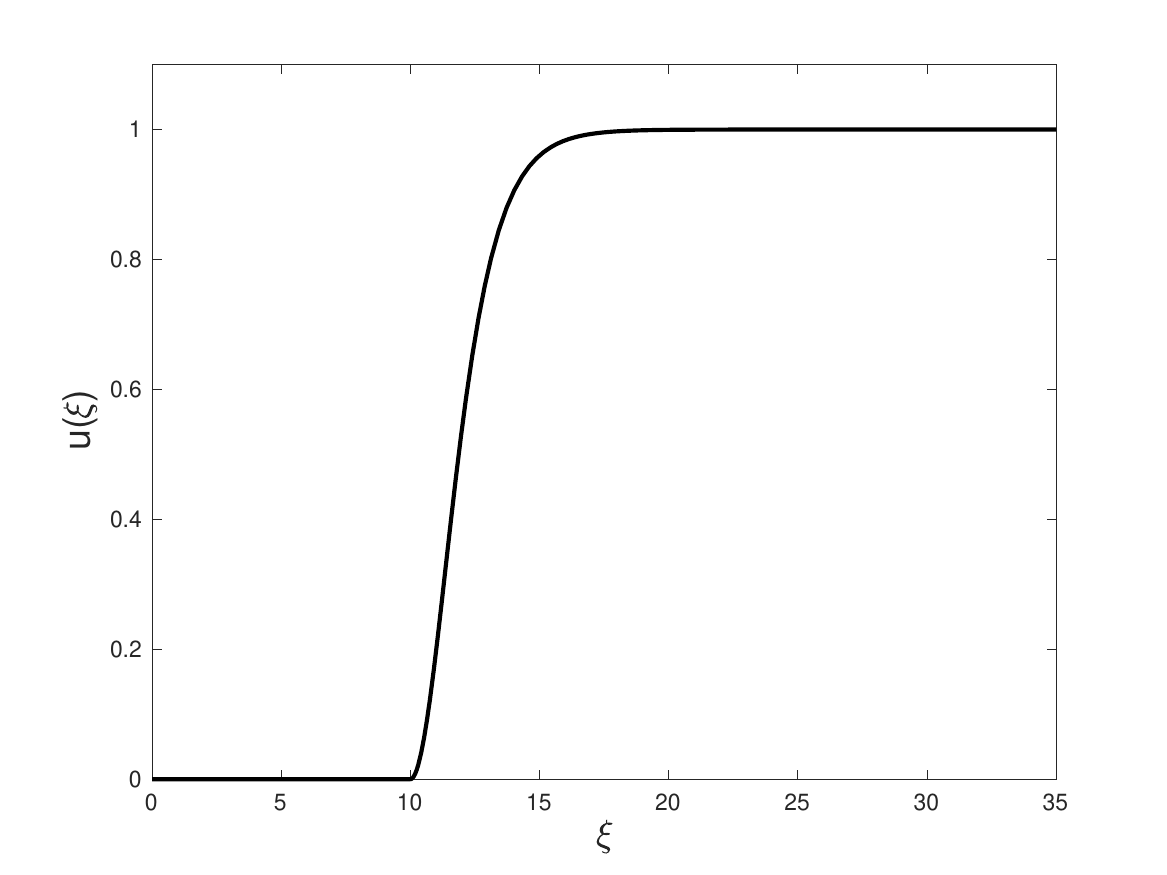}}  & \resizebox{0.5\textwidth}{!}{\includegraphics{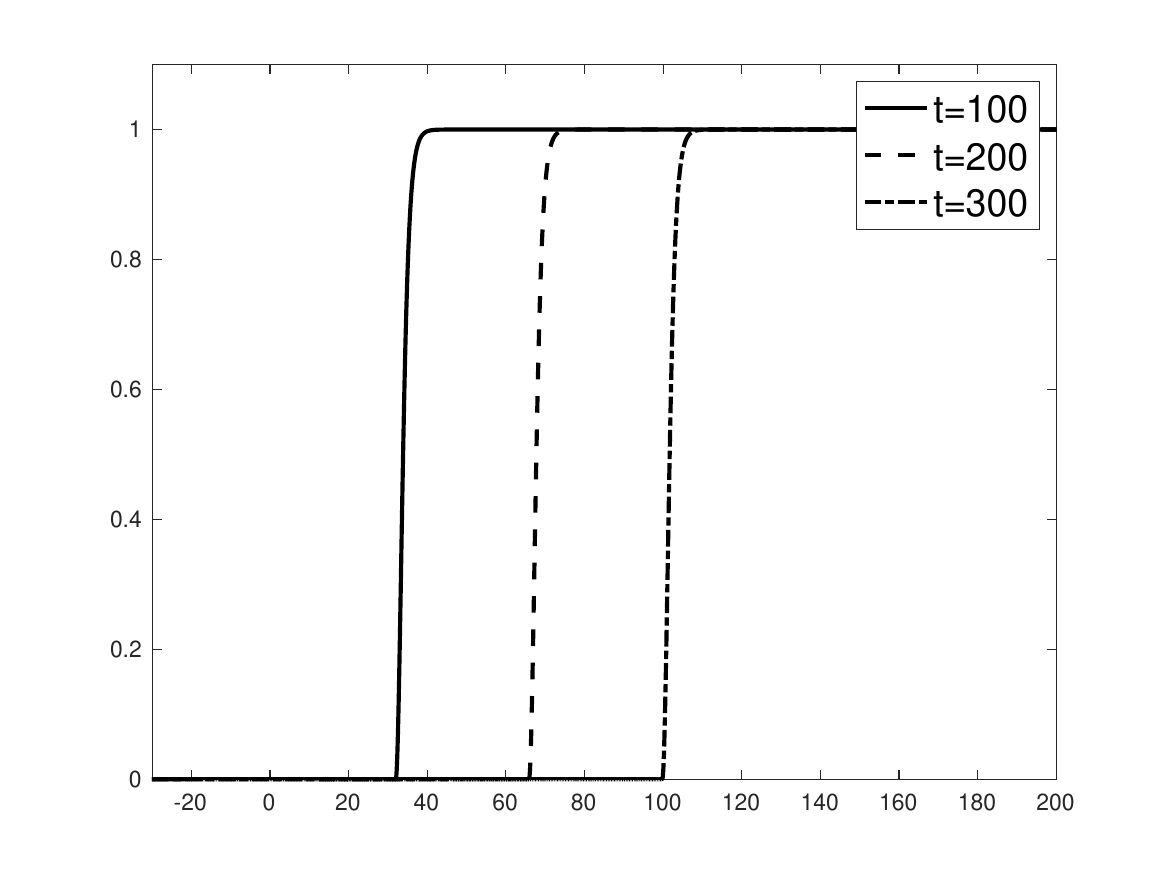}} \\
(a)  & (b) 
\end{tabular}
\caption{(a): Profile of the solution to \eqref{tw} connecting 0 and 1 (Theorem~\ref{thm3}). For $\alpha=0.5$,  $c_{\ast\ast}(\alpha) \approx 0.339$.  (b): The numerical solution to \eqref{sys} with  $\alpha=0.5$ for the initial data $u_0(x)=\frac{1}{2}(\tanh(0.1x)+1)$. The propagation speed of the front is approximately $0.336$.  }
\label{Fig3_1}
\end{figure}

\begin{figure}[ht]
\begin{tabular}[]{@{}c@{}@{}c@{}} 
\resizebox{0.5\textwidth}{!}{\includegraphics{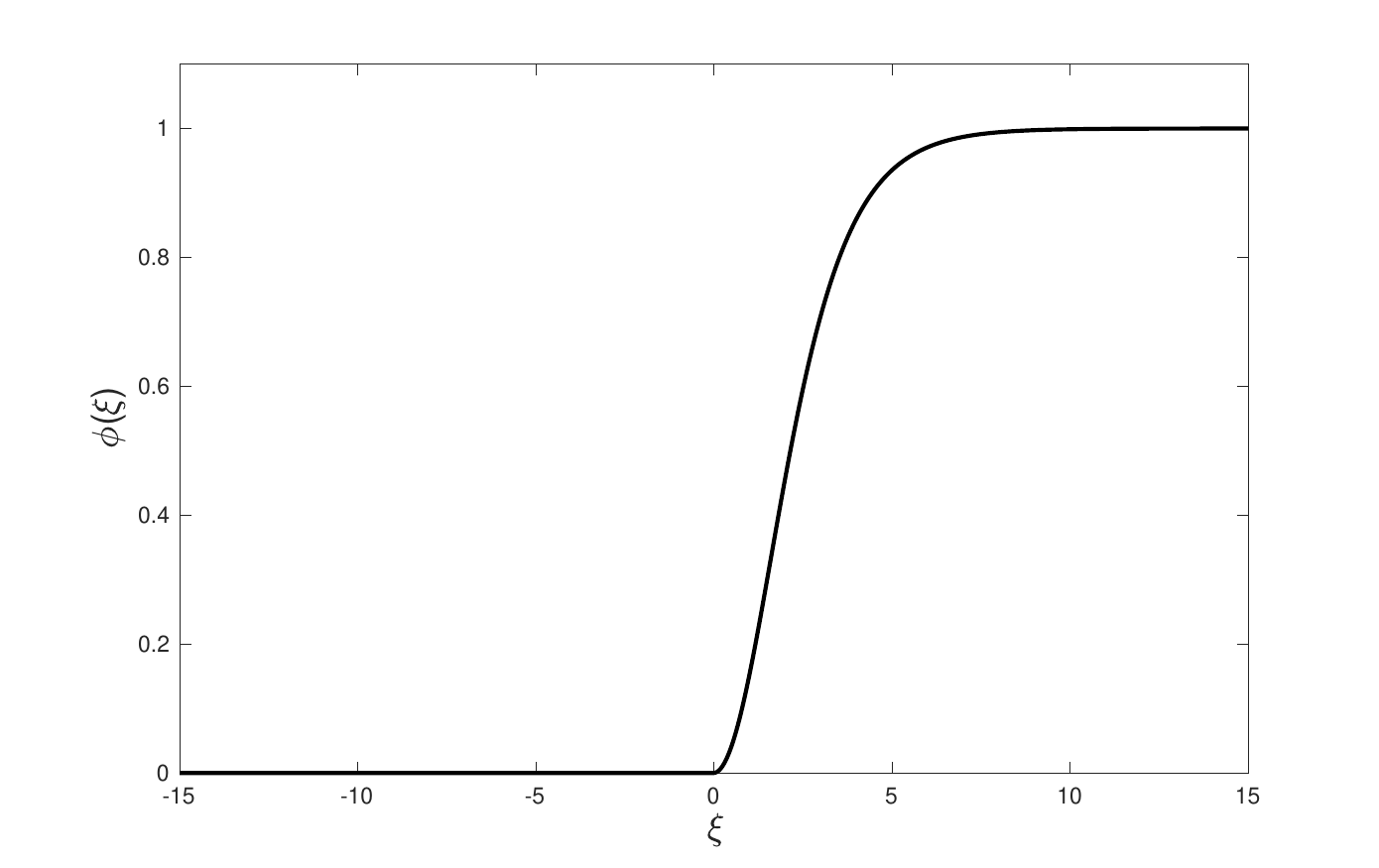}}  & \resizebox{0.5\textwidth}{!}{\includegraphics{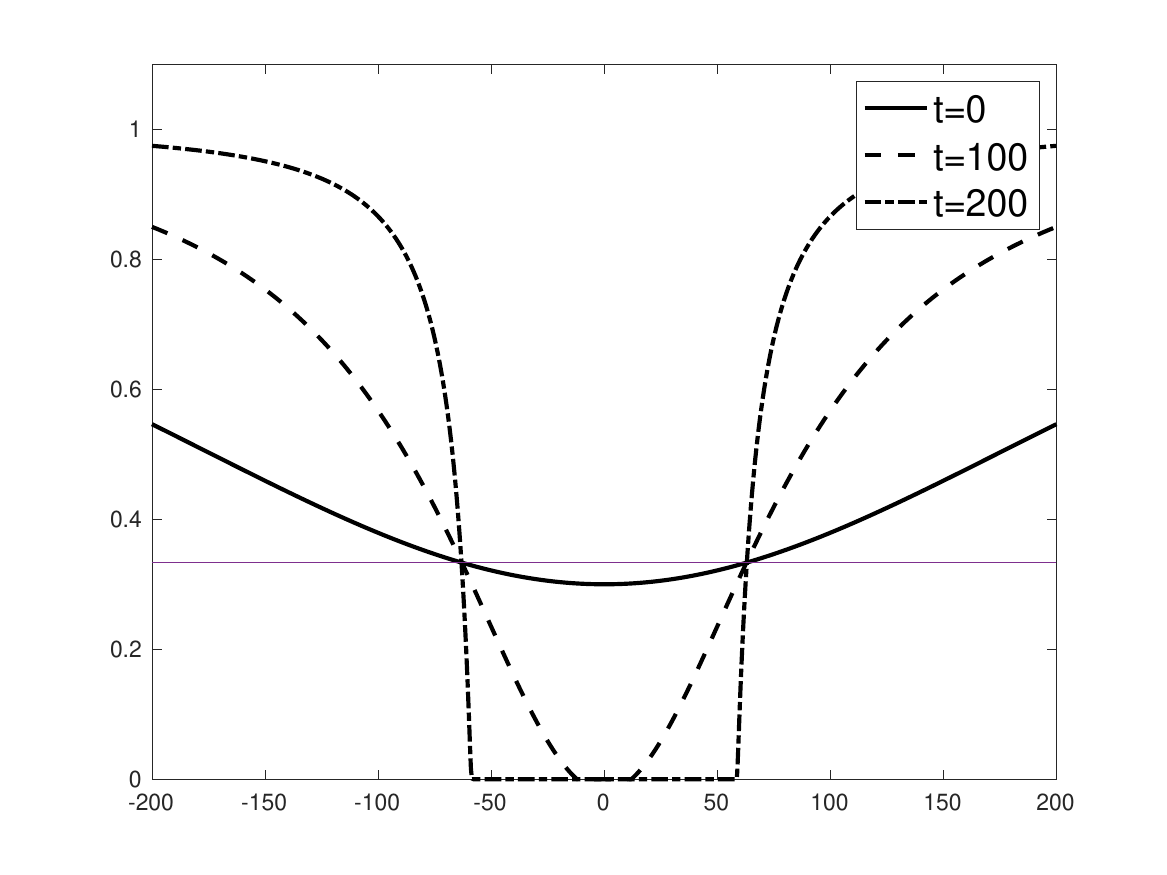}} \\
(a)  & (b) $u_0(x)=1-0.7\operatorname{sech}(0.005x)$ \\
\resizebox{0.5\textwidth}{!}{\includegraphics{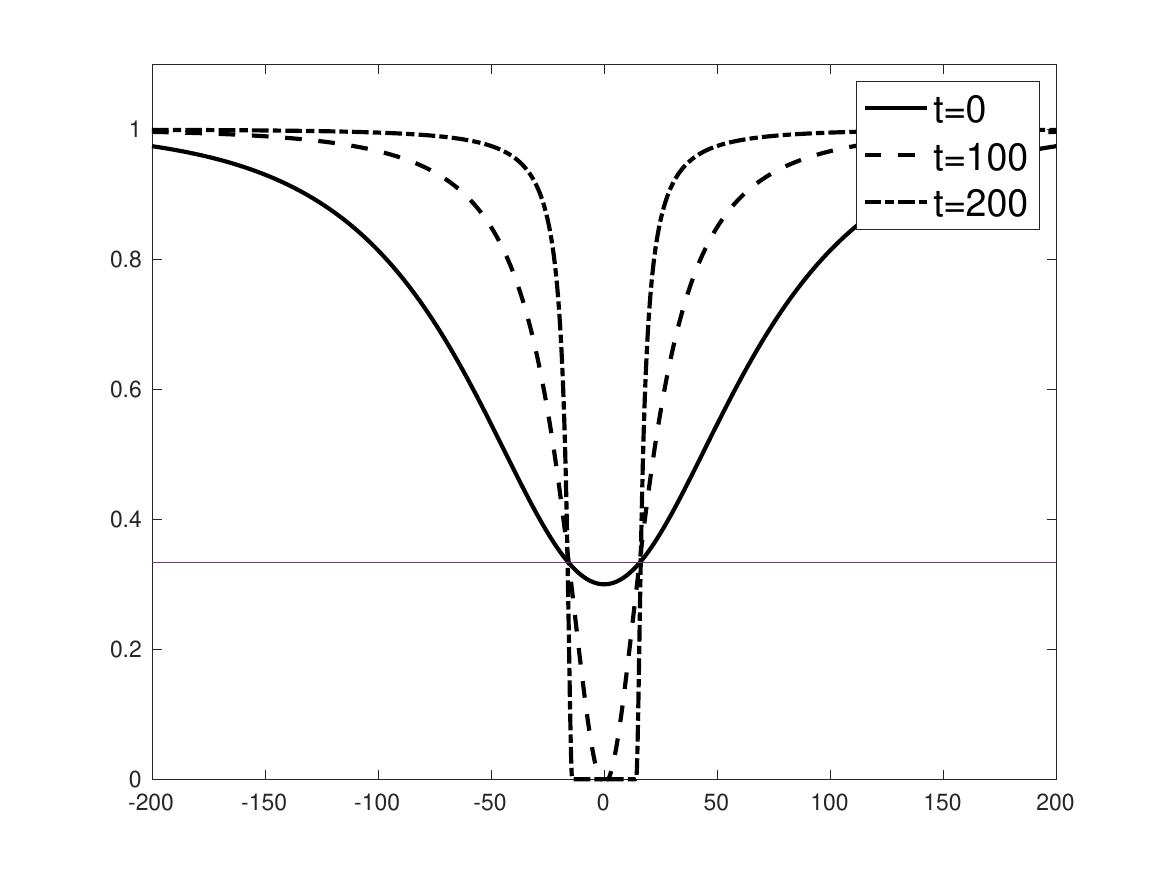}}  & \resizebox{0.5\textwidth}{!}{\includegraphics{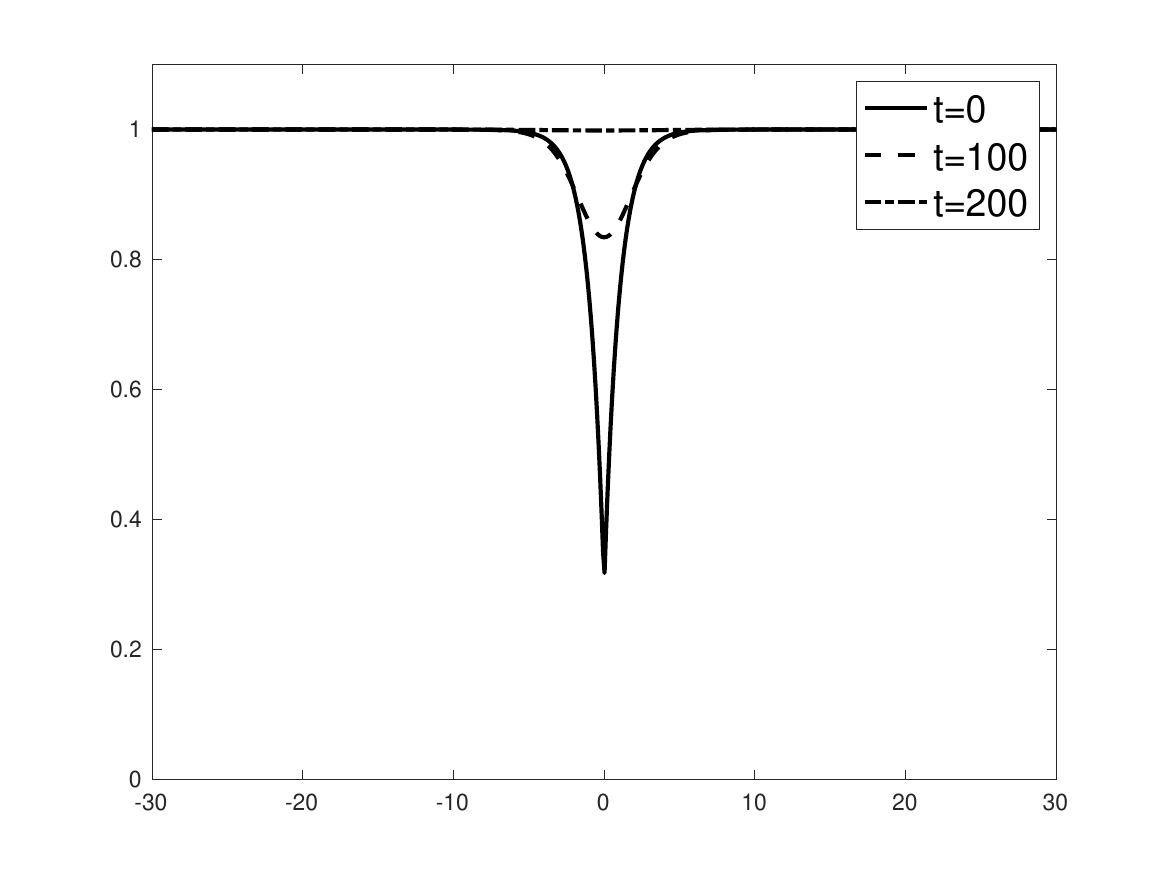}} \\
(c) $u_0(x)=1-0.7\operatorname{sech}(0.02x)$ & (d) $u_0(x)=1-0.7e^{-2|x|}$
\end{tabular} 
\caption{Numerical solutions of \eqref{sys} with $\alpha=1/3$ are presented. A stationary solution (Theorem~\ref{thm4}.(2)) is shown in (a). In (b)-(d), the numerical solutions of \eqref{sys} for different initial data $u_0$ are presented. (The horizontal line represents $u=1/3$.) In (b) and (c), the solutions converge to combinations of the stationary solution (see Theorem~\ref{thm4}.(2)), respectively. In contrast, in (d), the solution converges to 1, even though some part of the initial data lies below the unstable equilibrium at $u = 1/3$. In this case, the sharp gradient induces a stronger diffusive effect than the (unstable) reaction effect near its minimum point.}
\label{Fig8_1}
\end{figure}

\begin{figure}[ht] 
\begin{center}
        \includegraphics[width=0.9\linewidth]{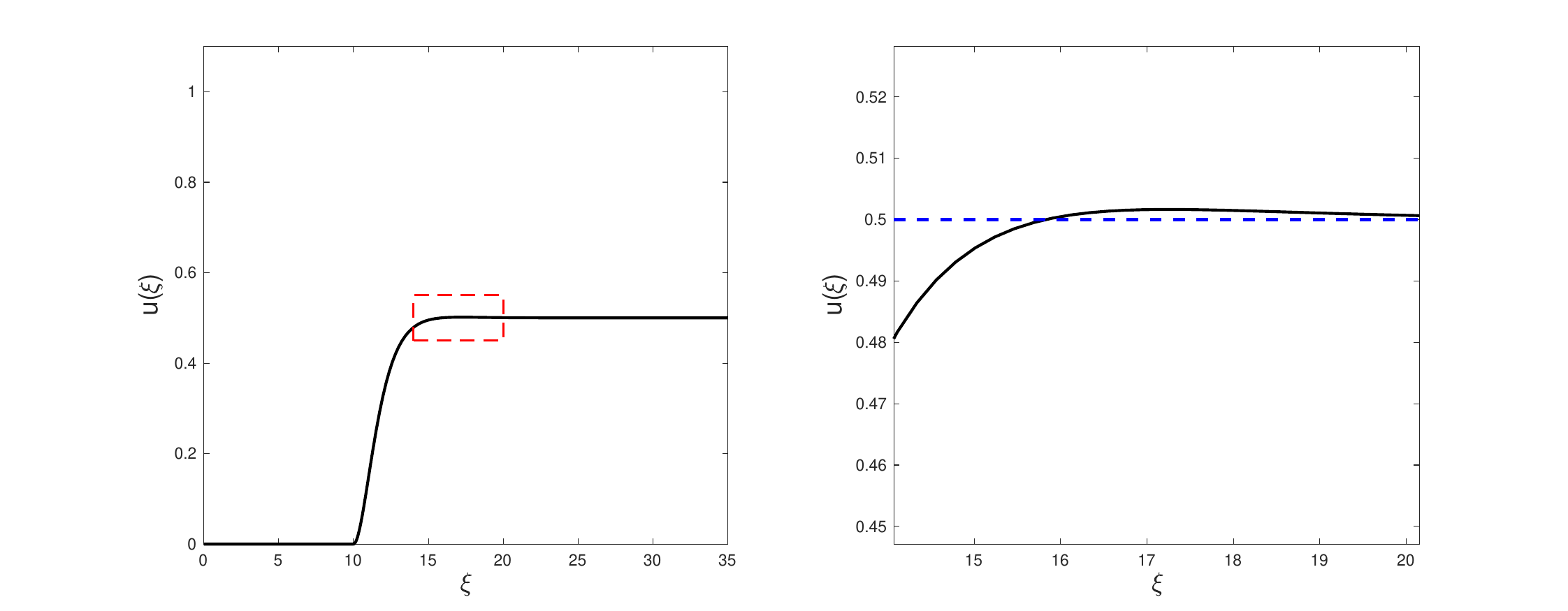}
\end{center}
\caption{A pushed wave related to Theorem~\ref{thm3} when $\alpha=0.5$ and $c=1.4145>c^\ast=\sqrt{2}$. This plot indicates $c^*<c^{**}$ numerically. }  
\label{Fig4_1}
\end{figure}

\section{Appendix}

\subsection{Global existence and comparison principle}
In \cite{CKKP}, the global existence of \textit{weak} solutions to \eqref{sys} with $0 < \alpha < 1/3$ is established, as well as the comparison principle under certain assumptions. In fact, we can deduce the same properties for \eqref{sys} with $0 < \alpha < 1$ by following the proofs given in \cite{CKKP} in a parallel manner.

Consider  
\begin{equation}\label{sys_f}
\left\{
\begin{array}{l l}
 u_t - u_{xx} = f(u), \quad t> 0, \; x\in \mathbb{R}, \\ 
u(x,0)=u_0(x),
\end{array} 
\right.
\end{equation}
where $f$ is a function satisfying  
\begin{equation}\label{Assu1}\tag{A1}
f \in \text{Lip}_{\text{loc}}(\mathbb{R}^-)\cap\text{Lip}_{\text{loc}}(\mathbb{R}^+), \quad \lim_{s\to 0^-}f(s) = f^* \geq 0, \quad \lim_{s\to 0^+}f(s) = -f_*<0.    
\end{equation}
We further assume that there exist positive constants $a^*$ and $h$ such that  
\begin{equation}\label{Assu2}\tag{A2}
   a^*<h, \quad  f(s)<0 \text{ on } (0,a^*), \quad f(s)>0 \text{ on } (a^*,h], \text{ and }\int_0^h f(s)\,ds = 0.
\end{equation}

We note that for $f(u)=(u-\alpha)(1-u)\chi_{\{u > 0\}}$, \eqref{Assu2} holds if and only if $0<\alpha<1/3$. We refer to Section 5.1 of \cite{CKKP} for the notions of \textit{weak} solutions, as well as \textit{weak super- and subsolutions} to \eqref{sys_f} satisfying \eqref{Assu1}.
\begin{theorem}[\cite{CKKP}, Section 5.2--5.3]\label{Thm_CKKP}
Suppose that $f$ satisfies \eqref{Assu1}--\eqref{Assu2} and there exists constants $b_0,b_1 > 0$ such that 
\begin{equation}\label{Assu3}
f(u) \leq b_1 u + b_0 \quad \text{for } u > 0.    
\end{equation}
Then, for a nonnegative and bounded function $u_0\in C_{\mathrm{loc}}^{1+\beta}(\mathbb{R})$, there is a weak solution $u$ of \eqref{sys_f} globally in time and $u \in C_{\mathrm{loc}}^{1+\beta,(1+\beta)/2}(\mathbb{R}\times [0,+\infty))$.
Furthermore, the following holds true:
\begin{enumerate}[(i)]
    \item \emph{(Comparison)} Let $u_1(x,t)$ and $u_2(x,t)$ be super- and subsolutions of \eqref{sys_f}, respectively, satisfying $u_1(x,0) \leq u_2(x,0)$. If $\partial_t u_1,\partial_t u_2 \in L^2(Q)$ for any compact subset $Q \Subset \mathbb{R}\times (0,\infty)$, then $u_1(x,t) \leq u_2(x,t)$ for all $(x,t) \in \mathbb{R}\times (0,\infty)$.
    \item \emph{(Uniqueness)} The weak solution of \eqref{sys_f} is unique among functions such that $\partial_t u \in L^2(Q)$ for any compact subset $Q \Subset \mathbb{R}\times (0,\infty)$.
    \item \emph{(Non-negativity)} If $u$ is a weak solution with $u(x,0) \geq 0$, then $u(x,t) \geq 0$ for all $(x,t) \in \mathbb{R}\times (0,\infty)$.
\end{enumerate}
\end{theorem}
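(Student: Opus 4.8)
The plan is to reproduce the arguments of \cite{CKKP} with the only change that the balance/ordering condition \eqref{Assu2} is never actually used for existence, comparison, uniqueness, or non-negativity; these conclusions rest solely on \eqref{Assu1} and \eqref{Assu3} and therefore cover every $\alpha\in(0,1)$ (for $f(u)=(u-\alpha)(1-u)\chi_{\{u>0\}}$ one has $f(u)\le(1+\alpha)u$, so \eqref{Assu3} holds for all $\alpha$). \textbf{Existence and regularity by regularization.} For $\varepsilon>0$ choose $f_\varepsilon\in C^1(\mathbb{R})$ obtained from $f$ by interpolating continuously across the jump on $[0,\varepsilon]$ and smoothing, so that $f_\varepsilon\to f$ pointwise on $\mathbb{R}\setminus\{0\}$ and $|f_\varepsilon(s)|\le b_1|s|+b_0+1$ uniformly in $\varepsilon$ (possible by \eqref{Assu1}--\eqref{Assu3} and local boundedness of $f$). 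The semilinear problem $\partial_t u^\varepsilon-\partial_{xx}u^\varepsilon=f_\varepsilon(u^\varepsilon)$, $u^\varepsilon(\cdot,0)=u_0$, has a unique global classical solution: local solvability follows from the Duhamel formula and the contraction mapping theorem (in $BUC(\mathbb{R})$, or on expanding balls with Dirichlet data plus a diagonal argument), and the linear bound on $f_\varepsilon$ together with comparison against the ODE $\gamma'=b_1\gamma+b_0+1$ gives an a priori sup bound on $u^\varepsilon$, uniform in $\varepsilon$ on each $[0,T]$, ruling out blow-up. Then $f_\varepsilon(u^\varepsilon)$ is bounded in $L^\infty_{\mathrm{loc}}$ uniformly in $\varepsilon$, so interior parabolic $L^p$ estimates (using $u_0\in C^{1+\beta}_{\mathrm{loc}}$) give uniform bounds in $W^{2,1}_{p,\mathrm{loc}}$ for every $p<\infty$, hence, by the parabolic embedding, uniform bounds in $C^{1+\beta,(1+\beta)/2}_{\mathrm{loc}}$ and for $\partial_t u^\varepsilon$ in $L^p_{\mathrm{loc}}\subset L^2_{\mathrm{loc}}$.

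\textbf{Passage to the limit.} Along a subsequence, $u^\varepsilon\to u$ in $C^{1,0}_{\mathrm{loc}}$ and weakly in $W^{2,1}_{p,\mathrm{loc}}$, with $u\in C^{1+\beta,(1+\beta)/2}_{\mathrm{loc}}$ and $\partial_t u\in L^2_{\mathrm{loc}}$. On the open set $\{u\neq 0\}$ the convergence $u^\varepsilon\to u$ is locally uniform and $f_\varepsilon\to f$ uniformly on the relevant compact ranges, so $f_\varepsilon(u^\varepsilon)\to f(u)$ there; on $\{u=0\}$, which may have positive measure, the Stampacchia lemma applied to $u\in W^{2,1}_{p,\mathrm{loc}}$ yields $\partial_t u=\partial_{xx}u=0$ a.e., so the equation holds there in the weak (Filippov) sense of \cite{CKKP} with right-hand side $0\in[-f_*,f^*]$. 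Passing to the limit in the weak formulation of the regularized problems shows $u$ is a global weak solution with the asserted regularity.

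\textbf{Comparison, uniqueness, non-negativity.} Let $u_1$ be a weak supersolution and $u_2$ a weak subsolution with $u_1(\cdot,0)\ge u_2(\cdot,0)$ and $\partial_t u_i\in L^2_{\mathrm{loc}}$, and set $v=u_2-u_1$, so $\partial_t v-\partial_{xx}v\le f(u_2)-f(u_1)$ weakly. The key structural point is that, since $f^*\ge 0>-f_*$, the jump of $f$ at $0$ is a \emph{downward} step, hence enters with a favorable sign: a case analysis over the sign combinations of $u_1,u_2$, using the one-sided Lipschitz bounds of $f$ on $\mathbb{R}^\pm$ up to $0$ (from \eqref{Assu1}) and the elementary bounds $0\le u_2\le v^+$, $0\le -u_1\le v^+$ in the mixed case $u_1<0<u_2$, gives $\bigl(f(u_2)-f(u_1)\bigr)\,v^+\le L\,(v^+)^2$ a.e.\ for a local Lipschitz constant $L$. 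Testing the differential inequality with $v^+\eta_R^2$, where $\eta_R$ is a standard spatial cutoff, and using $\partial_t v\cdot v^+=\tfrac12\partial_t(v^+)^2$ together with absorption of the cutoff-gradient term, yields $\tfrac{d}{dt}\int (v^+)^2\eta_R^2\le 2L\int (v^+)^2\eta_R^2+C R^{-1}$, where the $O(R^{-1})$ term uses the a priori boundedness of $v$ on $\operatorname{supp}\nabla\eta_R$ coming from \eqref{Assu3}. Since $v^+(\cdot,0)\equiv 0$, Gronwall and $R\to\infty$ force $v^+\equiv 0$, i.e.\ $u_2\le u_1$. Uniqueness in the class $\partial_t u\in L^2_{\mathrm{loc}}$ is the special case $u_1,u_2$ both solutions, and non-negativity follows by comparing $u$ with $w\equiv 0$, which is a weak subsolution because $f^*\ge 0$, noting $0=w(\cdot,0)\le u_0$.

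\textbf{Main obstacle.} The delicate part is the comparison principle: the reaction term is not Lipschitz—indeed not even continuous—at $u=0$, and $\{u=0\}$ need not be negligible, so one cannot localize away from it. The sign of the jump ($f^*\ge 0>-f_*$) is exactly what saves the day, letting the non-Lipschitz part be treated as a monotone perturbation that costs nothing in the energy estimate; the remaining mild technicality is the unbounded spatial domain, handled by the weighted cutoff and the linear-growth bound \eqref{Assu3}.
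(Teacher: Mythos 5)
This theorem is not proved in the paper at all: it is stated with the explicit attribution ``[\cite{CKKP}, Section 5.2--5.3]'' and the paper's only remark is that the proofs there carry over unchanged (and that \eqref{Assu2} is in fact not needed). Your proposal therefore does something genuinely different: it reconstructs a self-contained argument. The reconstruction is sound in its main lines and identifies the two points that actually matter. For existence, regularizing $f$ across the jump, extracting a limit via uniform $W^{2,1}_{p,\mathrm{loc}}$ bounds, and using the Stampacchia-type identity $\partial_t u=\partial_{xx}u=0$ a.e.\ on $\{u=0\}$ to reconcile the equation with the admissible interval $[-f_*,f^*]\ni 0$ is the standard route for such discontinuous reactions. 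For comparison, the decisive observation is correctly isolated: \eqref{Assu1} forces the jump at $u=0$ to be \emph{downward} ($f(0^+)=-f_*<0\le f^*=f(0^-)$), so the discontinuous part of $f$ is non-increasing and contributes a favorable sign in the $(f(u_2)-f(u_1))v^+$ estimate; your case analysis in the mixed configuration is correct, and uniqueness and non-negativity then follow exactly as you say. You also correctly note that \eqref{Assu2} plays no role, which is precisely the paper's point in the follow-up theorem. Three small caveats, none fatal: (i) your energy estimate silently uses that weak super- and subsolutions are bounded (to control the cutoff error $O(R^{-1})$), which must be part of the definition imported from \cite{CKKP} rather than a consequence of \eqref{Assu3}; (ii) $f\in\mathrm{Lip}_{\mathrm{loc}}(\mathbb{R}^\pm)$ on the \emph{open} half-lines does not by itself give the one-sided Lipschitz bounds up to $s=0$ that your case analysis invokes, though this holds trivially for the quadratic $f$ actually used; (iii) whether your limit is a ``weak solution'' depends on the precise definition in Section 5.1 of \cite{CKKP}, which you cannot verify blind — but your Filippov-type handling of $\{u=0\}$ is the natural match.
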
 

Following \cite{CKKP}, it is straightforward to check that \eqref{Assu2} is unnecessary and the following statement also holds.
\begin{theorem}
Suppose that $f$ satisfies \eqref{Assu1} and \eqref{Assu3}. Then, the same assertions as in Theorem \ref{Thm_CKKP} hold true.
\end{theorem}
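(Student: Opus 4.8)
The plan is to revisit the proofs of the four assertions of Theorem~\ref{Thm_CKKP} — global existence together with the interior regularity $u\in C^{1+\beta,(1+\beta)/2}_{\mathrm{loc}}(\mathbb{R}\times[0,\infty))$, the comparison principle, uniqueness, and non-negativity — as carried out in \cite{CKKP} for $f$ satisfying \eqref{Assu1}--\eqref{Assu3}, and to observe that the structural condition \eqref{Assu2} (the single sign change and the balance law $\int_0^h f=0$) is never invoked in any of them. Only the one-sided local Lipschitz regularity and the \emph{downward} jump at $u=0$ recorded in \eqref{Assu1}, together with the linear growth bound \eqref{Assu3}, are actually used, so the arguments go through verbatim once \eqref{Assu2} is deleted.

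First, for global existence and interior regularity, recall the approximation scheme of \cite{CKKP}: one replaces $f$ by a sequence of globally Lipschitz functions $f_k$ agreeing with $f$ outside a shrinking neighbourhood of $0$ and interpolating across the jump, solves the regularized semilinear heat equations (to which classical parabolic theory applies), and passes to the limit $k\to\infty$. The two a priori estimates that drive this are: (a) a uniform $L^\infty$ bound, obtained by comparing each $u_k$ with the spatially homogeneous super-solution $\dot\gamma=b_1\gamma+b_0$, $\gamma(0)=\sup u_0$, which uses \emph{only} \eqref{Assu3}; and (b) uniform interior parabolic Hölder estimates from De Giorgi--Nash--Moser and Schauder theory applied to $u_{k,t}-u_{k,xx}=f_k(u_k)$ with right-hand side bounded in $L^\infty$ by (a), which uses no hypothesis on $f$ beyond boundedness of $f_k(u_k)$. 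Neither step sees \eqref{Assu2}.

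Second, for the comparison principle, the only delicate point is the behaviour of $f$ at the discontinuity $u=0$, and the hypothesis that does the work is precisely the jump structure $\lim_{s\to0^-}f=f^*\ge 0\ge -f_*=\lim_{s\to0^+}f$ in \eqref{Assu1}. Following \cite{CKKP}, one tests the equation for the difference of a supersolution $u_1$ and a subsolution $u_2$ against a truncation of $(u_2-u_1)_+$ and derives a Gronwall-type inequality; the reaction contribution $\iint\bigl(f(u_2)-f(u_1)\bigr)(u_2-u_1)_+$ is controlled because, on each of the half-lines $\mathbb{R}^-$ and $\mathbb{R}^+$, $f$ is locally Lipschitz, while across $u=0$ the jump is in the favourable direction. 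This estimate is completely insensitive to the global profile of $f$ on $(0,\infty)$; in particular it uses neither the balance law nor the fact that $f$ has exactly one sign change. Hence the comparison principle holds under \eqref{Assu1} and \eqref{Assu3} alone. Uniqueness then follows by applying comparison with the roles of sub- and supersolution interchanged, and non-negativity follows since $u\equiv0$ is itself a weak solution — indeed $f(0)=0$ for the model nonlinearity $f(u)=(u-\alpha)(1-u)\chi_{\{u>0\}}$ — hence both a sub- and a supersolution, so comparison with $u_0\ge0$ gives $u\ge0$. None of these invoke \eqref{Assu2}.

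The main task, therefore, is bookkeeping: tracking each step of the proofs in \cite{CKKP} and confirming that \eqref{Assu2} is genuinely unused; there is no new analytic obstacle. The one point that warrants care is verifying that the $L^\infty$ bound underpinning global existence is extracted purely from the linear growth \eqref{Assu3}, and not from any sign or shape information that \eqref{Assu2} would otherwise supply — but the constant-in-space super-solution $\gamma$ settles this, so the extension to the full parameter range $0<\alpha<1$ (equivalently, to all $f$ satisfying \eqref{Assu1} and \eqref{Assu3}) is immediate.
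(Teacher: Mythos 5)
Your proposal is correct and follows exactly the route the paper takes: the paper's entire ``proof'' is the one-line observation that, following \cite{CKKP}, assumption \eqref{Assu2} is never actually used, and your step-by-step audit (the $L^\infty$ bound via the spatially homogeneous super-solution from \eqref{Assu3}, the comparison estimate relying only on the one-sided Lipschitz regularity and the downward jump at $u=0$ from \eqref{Assu1}, and uniqueness/non-negativity as corollaries) is precisely the bookkeeping the authors leave to the reader.
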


\section*{Acknowledgments}
This work was supported by the National Research Foundation of Korea grant funded by the Ministry of Science and ICT, under contract numbers RS-2024-00340022 (WC), NRF-2022R1C1C2005658 (JB), and RS-2024-00347311 (YK). JB was partially supported by the ANR Project HEAD ANR-24-CE40-3260.

\subsection*{Data availability statement}
All data that support the findings of this study are included within the article (and any supplementary files).

\subsection*{Conflict of interest}
The authors declare that they have no conflict of interest.
 
\bibliographystyle{plain}
\bibliography{references}

 \end{document}